\newcommand{\Om}{\Omega}
\newcommand{\Omt}{\Omega(t)}
\newcommand{\Omh}{\Omega_h}
\newcommand{\Omht}{\Omega_h(t)}
\newcommand{\Omhx}{\Omega_h(\bfx)}
\newcommand{\Ga}{\Gamma}
\newcommand{\Gat}{\Gamma(t)}
\newcommand{\Gah}{\Gamma_h}
\newcommand{\Gahx}{\Gamma_h(\bfx)}
\newcommand{\tr}{\mathbf{\gamma}}
\newcommand{\T}{^\mathrm{T}}
\newcommand{\Sohx}{S_{0,h}(\bfx)}
\newcommand{\Sohxt}{S_{0,h}(\bfx(t))}
\newcommand{\bfd}{\mathbf{d}}
\newcommand{\bfe}{\mathbf{e}}
\newcommand{\bff}{\mathbf{f}}
\newcommand{\bfu}{\mathbf{u}}
\newcommand{\bfv}{\mathbf{v}}
\newcommand{\bfw}{\mathbf{w}}
\newcommand{\bfx}{\mathbf{x}}
\newcommand{\bfy}{\mathbf{y}}
\newcommand{\bfz}{\mathbf{z}}
\newcommand{\bfM}{\mathbf{M}}
\newcommand{\bfA}{\mathbf{A}}
\newcommand{\bfK}{\mathbf{K}}
\newcommand{\mat}{\partial^\bullet}
\newcommand{\nb}{\nabla}
\newcommand{\laplace}{\Delta}
\newcommand{\dfdt}[2]{\frac{\mathrm{d}#1}{\mathrm{d}#2}}
\newcommand{\dfdx}[2]{\frac{\partial #1}{\partial #2}}
\newcommand{\dx}{\mathrm{d}x}
\newcommand{\dTheta}{\mathrm{d}\theta}
\newcommand{\R}{\mathbb{R}}
\newcommand{\calTh}{\mathcal{T}_h}
\newcommand{\hatx}{\widehat{x}}
\newcommand{\hatT}{\widehat{T}}
\newcommand{\brev}{\color{black}}
\newcommand{\erev}{\color{black}}
\begin{document}

\title{Finite element analysis for a diffusion equation \\
	on a harmonically evolving domain}

\shorttitle{Finite elements on a harmonically evolving domain}

\author{
{\sc Dominik Edelmann\thanks{Email: edelmann@na.uni-tuebingen.de}}\\[2pt]
Mathematisches Institut, Universit\"at T\"ubingen\\
Auf der Morgenstelle 10, 72076 T\"ubingen, Germany
}
\shortauthorlist{D. Edelmann}

\maketitle

\begin{abstract}
	{We study convergence of the evolving finite element semi-discretization of a parabolic partial differential equation on an evolving bulk domain. The boundary of the domain evolves with a given velocity, which is then extended to the bulk by solving a Poisson equation. The numerical solution to the parabolic equation depends on the numerical evolution of the bulk, which yields the time-dependent mesh for the finite element method. The stability analysis works with the matrix--vector formulation of the semi-discretization only and does not require geometric arguments, which are then required in the proof of consistency estimates. We present various numerical experiments that illustrate the proven convergence rates.}
	{evolving domain, harmonic velocity, diffusion, evolving finite elements, error analysis} \\
	Classification...
\end{abstract}
\renewcommand{\nu}{\mathrm{n}}
\section{Introduction}\label{section: introduction}
This paper studies the numerical discretization of a diffusion equation in a time-dependent domain that is specified by the velocity of its boundary. The interior velocity is determined as the solution of a Laplace equation with the given boundary velocity as Dirichlet data.

The strong formulation of this model is to find the time-dependent domain $\Omt \subset \R^n$ $(n=2, 3)$, $t \in [0,T]$, which moves with a velocity $v$ that is the harmonic extension of the a priori given velocity $v^\Ga$ of the boundary $\Gat = \partial \Omt$.  That is, $v$ is not given explicitly but determined as the solution of the Laplace equation, for all $t \in [0,T]$,
\begin{subequations}\label{eq: velocity Laplace equation}
	\noeqref{subeq: harmonic ext,subeq: surface vel}
	\begin{alignat}{3}
		- \laplace v(x,t) &= 0\,, \qquad &&x \in \Omt \,,\label{subeq: harmonic ext} \\
		v(x,t) &= v^\Ga(x,t) \,,\qquad &&x \in \Gat \label{subeq: surface vel} \,.
	\end{alignat}
\end{subequations}

In $\Omt$ we seek a solution $u=u(x,t)$  with given initial data $u(\cdot,0)=u_0$ to the partial differential equation 
\begin{align}\label{eq: strong formulation}
	\mat u(x,t) + u(x,t) \nb \cdot v(x,t) - \beta \laplace u(x,t) = f(x,t)\,,\quad x \in \Omt \,,\quad t \in [0,T] \,,
\end{align}
where $\mat$ denotes the material derivative, $\nb \cdot v$ is the divergence of the velocity and $\beta > 0$ is a given diffusion coefficient. On the boundary, we impose the Neumann condition $\dfdx{u}{\nu}(x,t) = g(x,t)$, $x \in \Gat$, $t \in [0,T]$, where $\nu$ denotes the unit outward pointing normal to $\Gat$.

Convection--diffusion equations in time-dependent domains have gained considerable interest in the past decades. A model similar to \eqref{eq: strong formulation} (without \eqref{eq: velocity Laplace equation}) with an additional convection term, together with homogeneous Dirichlet boundary conditions is analyzed in \cite{BC06} and \cite{BG04} in an arbitrary Lagrangian--Eulerian (ALE) framework, where the velocity is determined by the ALE mapping. In \cite{BG04}, the velocity of the boundary is prescribed and the ALE mapping is constructed as the harmonic extension of the boundary positions. This approach is first proposed in \cite{NF99} in the context of a generic conservation law on a moving domain, see also \cite{Gastaldi01,FN04} and the references therein.

Diffusion equations on evolving surfaces are analyzed in \cite{DE07a,DE07b,DE13a} for a given velocity, and there are recent works, where the velocity is not given explicitly but determined by various velocity laws that depend on the solution of the diffusion equation on the surface, see \cite{KLLP17,KL18,KLL19}.

\brev
Through the numerical analysis of the problem with a given boundary velocity  \eqref{eq: velocity Laplace equation}--\eqref{eq: strong formulation} we will develop techniques which are expected to be essential for more involved problems, such as the tumor growth model of \cite{EKS19}, where a  bulk--surface model for tissue growth is presented, together with a numerical algorithm. Instead of the coupled system \eqref{eq: velocity Laplace equation}--\eqref{eq: strong formulation} in \cite[(1.1)--(1.3) \& Section 6.1.2]{EKS19} they consider the boundary velocity $v^\Ga$ given by the forced mean curvature flow
\begin{align}
	v^\Ga &= \frac{u}{\alpha} + \beta H \,,
\end{align}
and instead of \eqref{eq: strong formulation} they consider an elliptic boundary value problem in the moving bulk. Here $H$ denotes the mean curvature of the boundary surface and $\alpha$, $\beta$ are given positive constants.
\erev

In this paper, we 
prove error bounds for the spatial semi-discretization of the coupled problem \eqref{eq: velocity Laplace equation}--\eqref{eq: strong formulation} with isoparametric finite elements of polynomial degree at least two. More precisely, we show $H^1$-norm error bounds in the positions and the velocity $v$ that are uniform in time, and $L^\infty L^2$-norm and $L^2 H^1$-norm error bounds for the solution $u$ of the diffusion equation. The proof clearly separates the stability and consistency analysis. To prove stability of the semi-discrete equations, we adapt techniques recently used in \cite{KLLP17,Kov17} to the present situation.  \brev The stability analysis of the semi-discrete problems uses energy estimates. Transport formulae are used to relate mass and stiffness matrices corresponding to different \brev discrete domains. In order to estimate errors between these matrices on different domains, \erev a key issue is to control the $W^{1,\infty}$-norm of the position error uniformly in time. This is done with an inverse estimate, that yields an $\mathcal{O}(h^{k-n/2})$ bound uniformly in time, which is small only for $k \ge 2$. The stability analysis of the semi-discrete diffusion equation uses the same techniques and is based on the stability analysis of the semi-discrete velocity law. Moreover, it becomes clear how the position error affects the error in the numerical solution to \eqref{eq: strong formulation}.

The stability analysis relies on smallness assumptions on the defects. These are shown to be true in the following consistency analysis, that uses geometric approximation estimates and interpolation results. The final convergence result is then obtained by combining stability and consistency estimates together with interpolation error bounds.
%
%
%

The paper is organized as follows.

In Section~\ref{section: problem formulation} we recall basic notation and formulate a diffusion equation on an evolving domain together with the above velocity law. We derive the weak formulation.

In Section~\ref{section: EBFEM} we describe the high-order evolving  finite element approximation of the problem. After introducing an \emph{exact triangulation} of the curved domain, we define the computational domain and the finite element method. We describe the spatial semi-discretization and derive a matrix--vector formulation, which will be crucial for the stability analysis.

In Section~\ref{section: main result} we state the main result of the paper, which gives convergence estimates for the spatial semi-discretization with evolving isoparametric finite elements of polynomial degree at least 2. We outline the main ideas of the proof.

In Section~\ref{section: auxiliary results} we collect auxiliary results that will be needed for the following analysis. The first part deals with the evolving mass and stiffness matrices and their properties, which are crucial in the stability analysis. The second part collects geometric estimates which will be needed only for consistency analysis.

Section~\ref{section: stability velocity} analyses the stability of the semi-discrete velocity law without a diffusion equation on the evolving domain. In Section~\ref{section: stability diffusion}, we extend the stability analysis to the semi-discrete diffusion equation. Section~\ref{section: defect bounds} contains the consistency analysis, that is, estimating the defects obtained on inserting the interpolated exact solutions into the numerical scheme.

In Section~\ref{section: proof} we prove the main convergence result by combining the stability and consistency estimates. Section~\ref{section: examples} provides several numerical experiments which illustrate the theoretical results. 

\section{Problem formulation}\label{section: problem formulation}
\subsection{Basic notation}
For $t \in [0,T]$, let $\Omt \subseteq \R^n$ ($n=2, 3$) be an open, bounded and connected set with smooth boundary $\Gat = \partial \Omt$ and $\Om_0 = \Om(0)$, $\Ga_0 = \Ga(0)$. We denote $\overline{\Omt} = \Omt \cup \Gat$. We assume that there exists a sufficiently smooth map $X: \Om_0 \cup \Ga_0 \times [0,T] \to \R^n$ such that
\[ \Omt=\{ X(p,t):\, p \in \Om_0 \} \,,\quad \Gat = \{ X(p,t):\, p \in \Ga_0  \} \,. \]
The velocity $v(x,t)$ at a point $x=X(p,t) \in \overline{\Omt}$ is defined by
\[ v(X(p,t),t) = \dfdx{}{t} X(p,t) \,. \]
For a function $u=u(x,t)$, $x \in \overline{\Omt}$, $t \in [0,T]$, the material derivative at $x=X(p,t)$ is defined by
\begin{align}\label{eq: material derivative}
	\mat u(x,t) = \dfdt{}{t} u(X(p,t),t) = \dfdx{}{t} u(x,t) + \nb u(x,t) \cdot v(x,t) \,. 
\end{align}
For $x \in \Gat$, we denote by $\nu=\nu(x,t)$ the unit outward pointing normal to $\Gat$. We define the space--time domain $\Om_T$ and the space--time surface $\Ga_T$ by
\begin{align}\label{space-time domain}
	\Om_T = \bigcup_{t \in [0,T]} \Omt \times \{t\} \,,\quad \Ga_T = \bigcup_{t \in [0,T]} \Gat \times \{t\} \,.
\end{align}
For functions $\varphi, \psi$ defined on $\Omt$, we have bilinear forms
\begin{align}\label{eq: bilinear forms}
	m(\varphi,\psi) &= \int_{\Omt} \varphi \psi \dx \,,\\
	a(\varphi,\psi) &= \int_{\Omt} \nb \varphi \cdot \nb \psi \dx \,.
\end{align}
Note that these bilinear forms explicitly depend on $t$, but we will omit the argument $t$, for brevity. It will always be clear from context for which $t \in [0,T]$ the bilinear forms are evaluated.

\subsection{Diffusion equation}
We assume that $u=u(\cdot,t)$ is the density of a scalar quantity on $\Omt$ (for example, mass per unit volume). We follow a construction of \cite[Section 3]{DE07a} to obtain a diffusion equation with Neumann boundary conditions:
\begin{equation}\label{eq: conservation and diffusion law}
	\left\{
	\begin{aligned}\
		\mat u + u \nb \cdot v - \beta \laplace u &= f \quad&&\text{in }&&\Omt \,,\\
		\dfdx{u}{\nu} = \nb u \cdot \nu &= g \quad&&\text{on }&&\Gat \,,
	\end{aligned}
	\right.
\end{equation}
where $\nb \cdot v$ denotes the divergence of the velocity, $\beta > 0$ is a given diffusion coefficient and $\nu$ the unit outward pointing normal to $\Gat$. 

\subsection{Harmonic velocity law}
Contrary to existing works (cf. \cite{ER17pre}), the velocity $v(\cdot,t)$ of $\Omt$ is not given explicitly. Instead, only the velocity of the boundary $\Gat = \partial \Omt$ is given; the velocity of the bulk is then determined as the harmonic extension, i.e. as the solution to the Laplace equation. More precisely, we have the following differential equation for $v(x,t)$: for each $t \in [0,T]$
\begin{equation}\label{eq: Poisson equation velocity}
	\left\{
	\begin{aligned}
		- \Delta v(\cdot,t) &= 0 \quad &&\text{in}\,&&\Omt \,,\\
		v(\cdot,t) &= v^\Ga(\cdot,t) \quad &&\text{on}\,&&\Gat \,.
	\end{aligned}
	\right.
\end{equation}
\brev We assume that $v^\Ga$ is defined on a neighborhood of $\Ga_T$, as defined in \eqref{space-time domain}. \erev This system is considered together with the position ODEs: for each $p \in \overline{\Om(0)}$
\begin{equation}\label{eq: position ODE}
	\left\{
	\begin{aligned}
		\dfdt{}{t}X(p,t) &= v(X(p,t),t) \,,\\
		X(p,0) &= p \,.
	\end{aligned}
	\right.
\end{equation}
We consider an equivalent problem with homogeneous Dirichlet boundary conditions: assume that $v^\Ga(\cdot,t)$ is the trace of a given function \brev $w(\cdot,t) \in H^1(\Omt)^n$ and consider the equivalent problem: find $\widetilde{v}(\cdot,t) \in H_0^1(\Omt)^n$ \erev such that
\begin{equation}\label{eq: Poisson equation velocity homogeneous}
	\left\{
	\begin{aligned}
		-\Delta \widetilde{v}(\cdot,t) &= \Delta w(\cdot,t) \quad &&\text{in}\,&&\Omt \,,\\
		\widetilde{v}(\cdot,t) &= 0 \quad &&\text{on}\,&&\Gat \,.
	\end{aligned}
	\right.
\end{equation}
It is easily seen that the solution $v = \widetilde{v} + w$ to \eqref{eq: Poisson equation velocity} does not depend on the choice of $w$.

\subsection{Coupled problem: strong and weak formulation}
We consider the following system of partial differential equations: \brev for given $\beta>0$, $f: \R^n \times [0,T] \to \R$, $g: \R^n \times [0,T] \to \R$ and $v^\Ga: \R^n \times [0,T] \to \R^n$, find the unknown function $u: \Om_T \to \R$, the unknown velocity field $v: \Om_T \to \R^n$ and the unknown position function $X: \Om_0 \cup \Ga_0 \times [0,T] \to \R^n$ \erev such that for all $t \in [0,T]$
\begin{equation}\label{eq: conservation diffusion coupled to semi-explicit velocity law}
	\left\{
	\begin{aligned} 
		\mat u(\cdot,t)+ u(\cdot,t) \nb \cdot v(\cdot,t) - \beta \laplace u (\cdot,t) &= f(\cdot,t) \quad&&\text{in }&&\Omt \,, \\
		\dfdx{u}{\nu} (\cdot,t) &= g(\cdot,t) \quad&&\text{on }&&\Gat \,, \\
		\dfdt{X}{t}(\cdot,t)&= v(X(\cdot,t),t) \quad&&\text{in }&&\Om_0\cup \Ga_0 \,, \\
		-\laplace v(\cdot,t) &= 0 \quad&&\text{in }&&\Omt \,, \\
		v(\cdot,t) &= v^\Ga(\cdot,t) \quad&&\text{on }&&\Gat \,.
	\end{aligned}
	\right.
\end{equation}
Without loss of generality, we assume $\beta=1$ and $g \equiv 0$ in the following.

\brev
\begin{remark}\label{remark: domain motion independent}
	The last three equations of \eqref{eq: conservation diffusion coupled to semi-explicit velocity law} purely describe the motion of the domain $\Omt$ and are independent of the parabolic equation for $u$. The latter includes the velocity $v$ in the material derivative as well as the divergence of the velocity in the equation. This is reflected in the stability analysis, which is first done for the discretization of the domain motion, and then extended to the parabolic equation. On the other hand, if the velocity field $v$ was given for the whole domain, the finite element analysis for the parabolic equation alone would be remarkably easier. Convergence results for these types of problems with given velocity on the whole domain are found in \cite[Section 7]{ER17pre}.
\end{remark}
\erev

We now derive a weak formulation. By multiplying the first equation with an arbitrary test function $\varphi \in H^1(\Omt)$ such that $\mat \varphi$ exists in $L^2(\Omt)$, integrating over $\Omt$, using the Leibniz formula, Green's formula and the Neumann boundary condition, we arrive at
\begin{align}
	\dfdt{}{t} \int_{\Omt} u \varphi + \int_{\Omt} \nb u \cdot \nb \varphi = \int_{\Omt} f \varphi  + \int_{\Omt} u \mat \varphi \,.
\end{align}
Multiplying \eqref{eq: Poisson equation velocity homogeneous} with arbitrary test function \brev $\psi \in H_0^1(\Omt)^n$ \erev, integrating over $\Omt$ and using Green's formula, we obtain
\begin{align*}
	\int_{\Omt} \nb \widetilde{v} \cdot \nb \psi = - \int_{\Omt} \nb w \cdot \nb \varphi  \,,
\end{align*}
where we have used that the boundary integrals vanish thanks to \brev $\psi \in H_0^1(\Omt)^n$ \erev. Here, the dot denotes the Euclidean inner product of the vectorizations of the matrices, i.e. the Frobenius norm inner product of the matrices. Again, it can be shown that the weak solution $v = \widetilde{v} + w$ does not depend on $w$.

The weak formulation of the diffusion equation and the domain evolution thus reads: \brev find $u(\cdot,t) \in H^1(\Omt)$, $\widetilde{v}(\cdot,t) \in H_0^1(\Omt)^n$ such that for all $\varphi \in H^1(\Omt)$ with $\mat \varphi \in L^2(\Omt)$, $\psi \in H_0^1(\Omt)^n$ \erev and all $t \in [0,T]$
\begin{align}\label{eq: weak formulation}
	\begin{aligned}
		\dfdt{}{t} \int_{\Omt} u \varphi + \int_{\Omt} \nb u \cdot \nb \varphi &= \int_{\Omt} f \varphi + \int_{\Omt} u \mat \varphi \,, \\
		\int_{\Omt} \nb \widetilde{v} \cdot \nb \psi &= -\int_{\Omt} \nb w \cdot \nb \psi \,, \\
		v &= \widetilde{v} + w \,, \\
		\dfdt{X}{t} &= v \,.
	\end{aligned}
\end{align}
This is considered together with given initial data $u(\cdot,t)=u_0(\cdot)$, $X(\cdot,0)=Id$.

We assume throughout the paper that there exists a unique weak solution with sufficiently high Sobolev regularity on $[0,T]$. \brev Precise regularity assumptions will be given in Theorem \ref{theorem: error bound}. \erev

\brev From now on, we will be working in the technically more challenging three-dimensional case. All of the upcoming results are valid in the two-dimensional case as well. \erev

\section{Evolving bulk finite elements}\label{section: EBFEM}
In this section we briefly recall the evolving isoparametric finite element method which is used for semi-dis\-cre\-ti\-zation in space. We refer to \cite{ER13,ER17pre} for a more detailed introduction \brev into the construction of isoparametric finite elements \erev.


In the following, we denote $\Om_0=\Om(0)$ for brevity. The initial domain $\Om_0$ is triangulated and the nodes are then evolved in time by solving the position ODE $\dot{x}_i = v(x_i,t)$ in each node, together with \eqref{eq: Poisson equation velocity}.

\subsection{High-order domain approximation}\label{subsec: high order domain approximation}
We construct a triangulation $\calTh^{(1)}$ of $\Om_0$ consisting of closed simplices with maximal diameter $h$. The union of all simplices of $\calTh^{(1)}$ defines a polyhedral approximation $\Omh$ of $\Om_0$, whose boundary $\Gah = \partial \Omh$ is an interpolation of $\Ga_0$.

Each simplex $T \in \calTh^{(1)}$ corresponds to a curved simplex $T^\mathrm{c} \subset \Om$, which is parametrized over the unit simplex $\widehat{T}$ with a map $\Phi_T^\mathrm{c} = \Phi_T + \rho_T$. Here, $\Phi_T$ denotes the usual affine function that maps $\widehat{T}$ onto $T$. For the construction of an appropriate $\rho_T$, we refer to \cite{ER13}. The union of those curved simplices can be considered as an \emph{exact triangulation} of $\Om_0$. Using the map $\Phi_T^\mathrm{c}$, we can define an isoparametric mapping $\Phi_T^{(k)}$, that maps the unit simplex $\widehat{T}$ to a polynomial simplex $T^{(k)}$. $\Omh^{(k)}$ is then defined as the union of elements in $\calTh^{(k)}$, where
\[ \calTh^{(k)} := \{ T^{(k)}:T \in \calTh^{(1)} \} \,,\quad T^{(k)} := \{ \Phi_T^{(k)}(\hatx): \hatx \in \hatT \} \,. \]

\subsection{Evolving finite element method}
Here and in the following, we use the notational convention that vectors and matrices are denoted with bold-face letters. \brev As mentioned, we set $n=3$ and assume that the order $k \ge 2$ is fixed. \erev

Based on the previous subsection, we obtain a triangulation of $\Om_0$, whose nodes $x_1^0,\ldots,x_N^0$ are collected in a vector $\bfx^0=(x_1^0,\ldots,x_N^0) \in \R^{3N}$. We assume that the enumeration is such that exactly the first $N_\Ga$ nodes lie on the boundary $\Ga_0 = \partial \Om_0$. The nodes are evolved in time and collected in a vector $\bfx(t) = (x_1(t),\ldots,x_N(t))$ with $\bfx(0)=\bfx^0$. We use the notation
\[ \bfx(t) = \begin{pmatrix} \bfx^\Ga(t) \\ \bfx^\Om(t) \end{pmatrix} \]
to indicate which nodes live on the boundary. The nodal vector $\bfx=\bfx(t)$ defines a computational domain $\Omh(\bfx)=\Omh(\bfx(t))$ with boundary $\Gahx$. The finite element basis functions $\varphi_j(\cdot,t): \Omh(\bfx(t)) \to \R$ satisfy
\[ \varphi_j(x_k(t),t) = \delta_{jk} \,,\quad 1 \le j,k \le N \,, \]
and their pullback to the reference triangle is polynomial of degree $k$. Note that, since the velocity is not given explicitly, we are in general not able to find the exact positions $x_j^*(t) = X(x_j^0,t)$, so that $\Omh(\bfx(t))$ is \emph{not} the triangulation of $\Om(t)$ corresponding to the exact positions $X(x_j^0,t)$. It is therefore more convenient to denote the dependence on $\bfx$ instead of $t$, i.e. to write $\Omh(\bfx)$ and not $\Omh(t)$, etc.

The finite element space is now given as
\[ S_h(\bfx) = \mathrm{span}\{ \varphi_1[\bfx],\ldots,\varphi_N[\bfx] \} \,, \]
where $\varphi_j[\bfx](\cdot) = \varphi_j(\cdot,t)$ for $\bfx=\bfx(t)$.

We use the notation
\begin{align*}
	\Sohx &= \{ \varphi_h[\bfx] \in S_h(\bfx): \, \tr_h \varphi_h[\bfx] = 0 \} = \mathrm{span} \{ \varphi_{N_\Ga+1}(\bfx),\ldots,\varphi_N[\bfx] \} \,,
\end{align*}
where $\tr_h \varphi_h$ denotes the trace of a function $\varphi_h$ defined on $\Omhx$ on $\Gahx$. We set
\[ X_h(p_h,t) = \sum_{j=1}^N x_j(t) \varphi_j[\bfx(0)](p_h)\,,\quad p_h \in \Omh^0\cup\Gah^0 \,,\]
which has the properties that $X_h(x_k^0,t) = x_k(t)$ and $X_h(x_j^0,0) = x_j^0$, implying that $X_h(x,0) = x$ for all $x \in \Omh^0 \cup \Gah^0$. 
The discrete velocity $v_h(x,t)$ at a particle $x = X_h(p_h,t)$ is given by
\[ v_h(X_h(p_h,t),t) = \dfdt{}{t} X_h(p_h,t) \,.\]
\brev The basis functions satisfy the transport property
\begin{align}\label{transport property}
	\dfdt{}{t} \left( \varphi_j[\bfx(t)](X_h(p_h,t)) \right) = 0 \,,
\end{align}
which implies $\varphi_j[\bfx(t)](X_h(p_h,t)) = \varphi_j[\bfx(0)](p_h)$. \erev For the discrete velocity, this means
\begin{align*}
	v_h(x,t) &= v_h(X_h(p_h,t),t) = \dfdt{}{t} \sum_{j=1}^N x_j(t) \varphi_j[\bfx(0)](p_h) =\sum_{j=1}^N v_j(t) \varphi_j[\bfx(t)](x) \text{ with }v_j = \dot{x}_j \,.
\end{align*}
In particular, $v_h(\cdot,t) \in S_h(\bfx(t))$. For a finite element function $u_h(x,t) = \sum_{j=1}^N u_j(t) \varphi_j[\bfx(t)](x)$, the discrete material derivative at $x=X_h(p_h,t)$ is defined by
\begin{align*}
	\mat_h u_h(x,t) = \dfdt{}{t} u_h(X_h(p_h,t),t) = \sum_{j=1}^N \dot{u}_j(t) \varphi_j[\bfx(t)](x) \,,
\end{align*}
where we have used the transport property again. In particular: $\mat_h u_h(\cdot,t) \in S_h(\bfx(t))$.

\subsection{Spatial semi-discretization and matrix--vector formulation}
The evolving finite element discretization of \eqref{eq: weak formulation} reads: find the unknown position vector $\bfx(t) \in \R^{3N}$ and the unknown finite element functions $u_h(\cdot,t) \in S_h(\bfx(t))$, $\widetilde{v}_h(\cdot,t) \in \Sohxt^3$ such that for all $\varphi_h(\cdot,t) \in S_h(\bfx(t))$ with $\mat_h \varphi_h \in S_h(\bfx(t))$ and all $\psi_h(\cdot,t) \in \Sohxt^3$
\begin{align}\label{eq: spatial semidiscretization}
	\begin{aligned}
		\dfdt{}{t} \int_{\Omh(\bfx(t))} u_h \varphi_h + \int_{\Omh(\bfx(t))} \nb u_h \cdot \nb \varphi_h &= \int_{\Omh(\bfx(t))} f \varphi_h + \int_{\Omh(\bfx(t))} u_h \mat_h \varphi_h \,, \\
		\int_{\Omh(\bfx(t))} \nb \widetilde{v}_h \cdot \nb \psi_h &= - \int_{\Omh(\bfx(t))} \nb w_h \cdot \nb \psi_h \,,
	\end{aligned}
\end{align}
together with 
\[ \dfdx{}{t} X_h(p_h,t) = v_h(X_h(p_h,t),t) \,,\quad X_h(p_h,0)=p_h \,, \]
for $p_h \in \Om_h^0 \cup \Gah^0$, where $v_h = \widetilde{v}_h + w_h$. The initial values for the nodal vector $\bfu$ corresponding to $u_h(\cdot,0)$ and the nodal vector $\bfx(0)$ are taken as the exact initial values of the nodes $x_j^0$ of the initial triangulation of $\Om_0$:
\begin{align}\label{eq: initial data discrete}
	u_j(0)=u(x_j^0,0) \,,\quad x_j(0)=x_j^0 \quad (j=1,\ldots,N) \,.
\end{align}

We now show that the nodal vectors $\bfu \in \R^{N}$ and $\bfv \in \R^{3N}$ corresponding to the finite element functions $u_h$ and $v_h$, respectively, together with the position vector $\bfx \in \R^{3N}$ satisfy a system of differential equations. We set (omitting the omnipresent argument $t$)
\[ u_h = \sum_{j=1}^N u_j \varphi_j[\bfx]\,,\quad v_h = \sum_{j=1}^N v_j \varphi_j[\bfx]  \]
with $u_j \in \R$, $v_j \in \R^3$ and collect the nodal values in vectors $\bfu \in \R^N$, $\bfv \in \R^{3N}$. The domain-dependent mass and stiffness matrices $\bfM(\bfx)$ and $\bfA(\bfx)$ are defined by
\begin{align}
	\bfM(\bfx)_{jk} &= \int_{\Omhx} \varphi_j[\bfx] \varphi_k[\bfx] \dx \,,\\
	\bfA(\bfx)_{jk} &= \int_{\Omhx} \nb \varphi_j[\bfx] \cdot \nb \varphi_k[\bfx] \dx \,.
\end{align} 
In view of the following discretization of the velocity law, we use the notation
\begin{align}\label{def: A split}
\bfA(\bfx) &= \begin{pmatrix}
\bfA_{11}(\bfx) & \bfA_{12}(\bfx) \\ \bfA_{21}(\bfx) & \bfA_{22} (\bfx)
\end{pmatrix} \,,
\end{align}
where $\bfA_{11}(x) \in \R^{N_\Ga \times N_\Ga}$ and $\bfA_{22}(\bfx) \in \R^{N_\Om\times N_\Om}$. $\bfA_{22}(\bfx)$ thus corresponds to the finite element functions which vanish on the boundary. We will use the same notation for $\bfM(\bfx)$ when it is necessary. It is important to note that the sub-matrix $\bfA_{22}(\bfx)$ is \brev invertible.\erev

For the right-hand side of the diffusion equation, we define the vector
\begin{align}
	\bff(\bfx(t))_k &= \int_{\Omhx} f \varphi_k[\bfx] \dx \,.
\end{align}
\brev By linearity, the transport property implies $\mat_h \varphi_h = 0$, so the first equation of  \eqref{eq: spatial semidiscretization} is equivalent to \erev
\[ \dfdt{}{t} \left( \bfM(\bfx(t)) \bfu(t) \right) + \bfA(\bfx(t)) \bfu(t) = \bff(\bfx(t))\,. \]

For the velocity law, we remind that the nodes $x_j(t)$, $j=1,\ldots,N_\Ga$, on the boundary are known explicitly since $v^\Ga(\cdot,t)$ is prescribed. Writing $v_j(t) = v^\Ga(x_j(t),t)$, we have the finite element interpolation of $v^\Ga$:
\[ v_h^\Ga(\cdot,t) = \sum_{j=1}^{N_\Ga} v_j(t) \varphi_j[\bfx(t)](\cdot) \,.  \]
We write
\[ w_h(\cdot,t) = \sum_{j=1}^N w_j(t) \varphi_j[\bfx(t)](\cdot) \,,\quad w_j(t) = v_j(t) \text{ for }j=1,\ldots,N_\Ga \,, \]
for an arbitrary extension of $v_h^\Ga$. \brev Noting that $\bfv$ has three components\erev, a short calculation shows that the second equation of \eqref{eq: spatial semidiscretization} is equivalent to
\begin{align}\label{eq: matrix vector formulation}
	\left(I_3 \otimes \bfA_{22}(\bfx) \right) \bfv^\Om = - \left( I_3 \otimes \begin{pmatrix}
	\bfA_{21}(\bfx) & \bfA_{22}(\bfx)
	\end{pmatrix} \right) \begin{pmatrix}
	\bfv^\Ga \\ \bfw^\Om
	\end{pmatrix} \,,
\end{align}
where $\bfw^\Om$ is the vector containing the nodal values of $w_h$ in the inner nodes. \brev Here, $I_3$ denotes the identity matrix of size $3 \times 3$ and $\otimes$ denotes the Kronecker product. \erev The solution $v_h$ we are seeking is then obtained by $v_h = v_h^\Ga + \widetilde{v}_h$ and corresponds to the nodal vector
\[ \bfv = \begin{pmatrix}
\bfv^\Ga \\ \bfv^\Om + \bfw^\Om
\end{pmatrix} \,, \quad \text{i.e. } v_h = \sum_{j=1}^N v_j \varphi_j[\bfx] \,. \]
Using the fact that $\bfA_{22}(\bfx)$ is invertible, it is easily seen that the solution $\bfv$ does not depend on the particular choice of $\bfw^\Om$, which is why we use $\bfw^\Om=0$.

The matrix--vector formulation reads (omitting the Kronecker product notation)
\begin{align}
\begin{aligned}
\dfdt{}{t} (\bfM(\bfx) \bfu) + \bfA(\bfx)\bfu &= \bff(\bfx) \,,\\
- \bfA_{22}(\bfx) \bfv^\Om(\bfx) &= \bfA_{21}(\bfx) \bfv^\Ga(\bfx) \,,\\
\dfdt{}{t} \begin{pmatrix} \bfx^\Ga \\ \bfx^\Om \end{pmatrix} = \dot{\bfx} &= \bfv = \begin{pmatrix} \bfv^\Ga \\ \bfv^\Om \end{pmatrix} \,.
\end{aligned}
\end{align}
The initial nodal vectors $\bfu(0)$ and $\bfx(0)$ are chosen as in \eqref{eq: initial data discrete}.

We will see in the following sections that the matrix--vector formulation is the only tool used in the stability analysis, where geometric estimates are only needed for the consistency analysis.

\subsection{Lifted finite element space}
In the error analysis, we compare functions on three different domains: the exact domain $\Omt$, the discrete domain $\Omht = \Omh(\bfx(t))$ obtained by the finite element method and the \emph{interpolated exact domain} $\Omh^*(t) = \Omh(\bfx^*(t))$, which is the computational domain corresponding to the nodal vector $\bfx^*(t)$ with the exact positions $x_j^*(t) = X(x_j^0,t)$ of the nodes at time $t$ \brev and only available in theory\erev.

Any finite element function $u_h \in S_h(\bfx)$ on the discrete computational domain, with nodal values $u_j$, $j=1,\ldots,N$, is related to a finite element function $\widehat{u}_h \in S_h(\bfx^*)$ with the same nodal values:
\[ \widehat{u}_h = \sum_{j=1}^N u_j \varphi_j[\bfx^*] \,. \]

Based on Section~\ref{subsec: high order domain approximation}, we obtain a map $\Lambda_h(\cdot,t): \Omh(\bfx^*(t)) \to \Omt$ (cf. \cite{ER13, ER17pre}), \brev that is defined element-wise and maps the curved elements of the triangulation of $\Omh(\bfx^*(t))$ onto the corresponding parts of $\Omt$. Restricted to interior simplices with at most one node on the boundary, this map is the identity. On boundary simplices, $\Lambda_h$ is of class $C^{k}$ if the boundary is of class $C^{k}$ (see \cite[Lemma 4.6]{ER13}). \erev




\begin{definition}\label{def: lift}
\brev	For a function $\widehat{u}_h \in S_h(\bfx^*(t))$, we define its lift $\widehat{u}_h^\ell: \Omt \to \R$ by 
	\[ \widehat{u}_h^\ell(\Lambda_h(x,t),t) := \widehat{u}_h (x,t) \,. \]
	The composed lift from finite element functions $u_h$ on $\Omh(\bfx(t))$ to functions on $\Omt$ is denoted by
	\[ u_h^L = \widehat{u}_h^\ell \,. \] \erev
\end{definition}
For any $u \in H^{k+1}(\Om)$, there exists a unique finite element interpolation in the nodes $x_j^*$, denoted by $\widetilde{I}_h u \in S_h(\bfx^*)$. We set $I_h u=(\widetilde{I}_h u)^\ell: \Om \to \R$. An interpolation estimate is obtained from \cite[Proposition 5.4]{ER13}, based on \cite{Ber89}.
\begin{proposition}[Interpolation error]\label{lemma: interpolation}
	There exists a constant $c > 0$ independent of $h \le h_0$ ($h_0$ sufficiently small) and $t$ such that \brev for all $1 \le m \le k$, $u(\cdot,t) \in H^{m+1}(\Omt)$ and $t \in [0,T]$ 
	\[ \lVert u - I_h u \rVert_{L^2(\Omt)} + h \lVert \nb (u - I_h u) \rVert_{L^2(\Omt)} \le c h^{m} \lVert u \rVert_{H^{m+1}(\Omt)} \,. \]
\end{proposition}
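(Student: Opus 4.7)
The plan is to reduce the global estimate to element-wise estimates and then invoke the classical Bramble--Hilbert lemma on the reference simplex $\hatT$. Recalling from Definition~\ref{def: lift} that $I_h u = (\widetilde{I}_h u)^\ell$, where the lift is defined element-wise through $\Lambda_h: \Omh(\bfx^*(t)) \to \Omt$, it suffices to prove, on each curved element $T^{\mathrm{c}} \subset \Omt$ obtained as the image of the isoparametric element $T^{(k)}$ under $\Lambda_h$, a local estimate of the form
\[
\lVert u - I_h u \rVert_{L^2(T^{\mathrm{c}})} + h\, \lVert \nb(u - I_h u) \rVert_{L^2(T^{\mathrm{c}})} \le c\, h^{m}\, \lVert u \rVert_{H^{m+1}(T^{\mathrm{c}})},
\]
for $1 \le m \le k$, and then to sum over the shape-regular family $\calTh^{(k)}$.

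For the local estimate, I would compose the isoparametric parametrization $\Phi_T^{(k)}: \hatT \to T^{(k)}$ with $\Lambda_h$ to obtain a $C^k$-diffeomorphism $F_T := \Lambda_h \circ \Phi_T^{(k)}: \hatT \to T^{\mathrm{c}}$. Pulling $u$ back to $\hatT$ via $\hatu := u \circ F_T$, the definition of the lift yields $I_h u \circ F_T = \widetilde{I}_h u \circ \Phi_T^{(k)}$, which is a polynomial of degree $k$ on $\hatT$ matching $u$ at the Lagrange nodes $x_j^*$. Since $\Lambda_h$ fixes these nodes, this polynomial is exactly the standard Lagrange interpolant $\widehat{I}\,\hatu$ on $\hatT$. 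As $\widehat{I}$ reproduces polynomials of degree $m \le k$, the Bramble--Hilbert lemma gives
\[
\lVert \hatu - \widehat{I}\,\hatu \rVert_{L^2(\hatT)} + \lVert \nb(\hatu - \widehat{I}\,\hatu) \rVert_{L^2(\hatT)} \le c\, \lvert \hatu \rvert_{H^{m+1}(\hatT)},
\]
and a change of variables back to $T^{\mathrm{c}}$ produces the required local estimate, provided one controls $\det DF_T$ from above and below and the norms of the derivatives of $F_T$ and $F_T^{-1}$ up to order $m+1$.

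The main obstacle will be obtaining these geometric bounds uniformly in $h$ and $t \in [0,T]$ on boundary elements. On interior elements $\Lambda_h$ is the identity and everything reduces to the classical isoparametric theory for $\Phi_T^{(k)}$ together with the shape-regularity of $\calTh^{(1)}$. On boundary elements, however, one must exploit the $C^k$-regularity of $\Lambda_h$ (as recalled after Definition~\ref{def: lift}) together with the high-order approximation properties of $\Phi_T^{(k)}$ from \cite{ER13}, to bound $\lVert D^j F_T \rVert_{L^\infty(\hatT)}$ for $1 \le j \le m+1 \le k+1$ and to guarantee $\det DF_T \sim h^n$ from below. The uniformity in $t$ is inherited from the regularity of the exact flow $X(\cdot,t)$ which governs the nodal positions $\bfx^*(t)$. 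Once these bounds are in hand, summing the local estimates over all elements yields the claim; this is essentially the argument of \cite[Proposition 5.4]{ER13} based on \cite{Ber89}.
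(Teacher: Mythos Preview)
The paper does not give its own proof of this proposition; it merely states the result and cites \cite[Proposition 5.4]{ER13} (which in turn rests on \cite{Ber89}). Your proposal is precisely a sketch of the argument carried out in those references: reduce to element-wise estimates, pull back to the reference simplex via the exact parametrization $F_T = \Lambda_h \circ \Phi_T^{(k)} = \Phi_T^{\mathrm{c}}$ (which indeed fixes the Lagrange nodes, so the pulled-back interpolant is the standard reference interpolant), apply Bramble--Hilbert, and transform back using the uniform geometric bounds on $F_T$ and $F_T^{-1}$ supplied by the isoparametric theory. Your outline is correct and coincides with the cited approach.
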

\section{Statement of the main result}\label{section: main result}
We are now able to formulate the main result of this paper, which yields error bounds for the spatial semi-discretization of  \eqref{eq: weak formulation} with evolving isoparametric finite elements of polynomial degree $k \ge 2$.
We introduce the notation
\[ x_h^L(x,t) = X_h^L(p,t) \in \Omht \quad\text{for}\quad x=X(p,t)\in \Omt \,. \]

\begin{theorem}\label{theorem: error bound}
	Consider the spatial semi-discretization \eqref{eq: spatial semidiscretization} of \eqref{eq: weak formulation} with evolving isoparametric finite elements of order $k \ge 2$. We assume a quasi-uniform admissible triangulation of the initial domain and initial values chosen by finite element interpolations of the exact initial data. Assume that the problem admits an exact solution $(u,v,X)$ that is sufficiently smooth (\brev $u \in H^{k+1}(\Omt)$, $v, X \in H^{k+1}(\Omt)^n$, $n=2, 3$ \erev) for $t \in [0,T]$ \brev and a quasi-uniform triangulation of $\Om_0$. \erev Then there exists an $h_0 > 0$ such that for all mesh widths $h \le h_0$ the following error bounds hold on $\Omt$, for $t \in [0,T]$:\brev
	\begin{align}
		\left( \lVert u_h^L(\cdot,t) - u(\cdot,t) \rVert_{L^2(\Omt)}^2 + \int_0^t \lVert u_h^L(\cdot,s) - u(\cdot,s) \rVert_{H^1(\Om(s))}^2 \mathrm{d} s \right)^{\frac{1}{2}} & \le c h^k \,,\\
		\lVert v_h^L(\cdot,t) - v(\cdot,t) \rVert_{H^1(\Omt)^n} &\le c h^k \,, \\
		\lVert X_h^L(\cdot,t) - X(\cdot,t) \rVert_{H^1(\Om_0)^n} &\le c h^k \,.
	\end{align} \erev
	The constant $c$ depends on the regularity of the exact solution $(u,v,X)$, on $T$ and on the regularity of $f$.
\end{theorem}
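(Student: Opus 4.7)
The plan follows the classical stability--plus--consistency strategy, executed through the matrix--vector formulation as emphasized in Sections~\ref{section: stability velocity}--\ref{section: defect bounds}. For each unknown I split the error into interpolation error and finite element error,
\[
u_h^L - u = (u_h^L - I_h u) + (I_h u - u),
\]
and analogously for $v$ and $X$. The interpolation pieces are $O(h^k)$ in the required norms by Proposition~\ref{lemma: interpolation}, so the task is to control the finite element parts, or equivalently their nodal vectors $\bfe_u = \bfu - \bfu^*$, $\bfe_v = \bfv - \bfv^*$, $\bfe_x = \bfx - \bfx^*$, where the starred quantities are the nodal vectors of $I_h u$, $I_h v$ and $I_h X$.

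Substituting the starred vectors into the matrix--vector formulation of \eqref{eq: spatial semidiscretization} produces three defect terms $\bfd_u$, $\bfd_v$, $\bfd_x$. The consistency analysis of Section~\ref{section: defect bounds} bounds these defects at rate $h^k$ in the appropriate dual norms, using the geometric approximation estimates comparing $\Omh(\bfx^*(t))$ with $\Omt$ together with interpolation bounds for $u$, $v$ and $\nb \cdot v$. I would then feed these defect bounds into the stability estimates. The domain motion is handled first: the error version of \eqref{eq: matrix vector formulation} is a linear system whose right-hand side is the velocity defect plus terms coming from $\bfA(\bfx) - \bfA(\bfx^*)$; coercivity of $\bfA_{22}(\bfx)$ closes the estimate for $\bfe_v$, and integrating the ODE $\dot{\bfx} = \bfv$ in time propagates it to $\bfe_x$. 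Once the domain motion is stable, one tests the error form of the parabolic equation with $\bfe_u$, uses the transport formulae for $\tfrac{\mathrm{d}}{\mathrm{d}t}\bfM(\bfx(t))$ and $\tfrac{\mathrm{d}}{\mathrm{d}t}\bfA(\bfx(t))$ collected in Section~\ref{section: auxiliary results}, and closes via Gronwall. Combining stability with the $O(h^k)$ defect bounds yields $\lVert \bfe_u \rVert$, $\lVert \bfe_v \rVert$, $\lVert \bfe_x \rVert$ of order $h^k$ in the relevant matrix-induced norms, and a final triangle inequality with the interpolation bound gives the three estimates stated in the theorem.

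The subtle point, and the main obstacle, is that these stability estimates hold only under a smallness hypothesis on the $W^{1,\infty}$-norm of the position error: the mass and stiffness matrices on $\Omh(\bfx(t))$ and $\Omh(\bfx^*(t))$ are uniformly comparable, and the matrix-difference bounds on which everything hinges are valid, only then. This hypothesis is not known a priori and must be propagated by a bootstrap argument: let $t^\star \le T$ be maximal such that $\lVert \bfe_x(t) \rVert_{W^{1,\infty}}$ stays below a fixed small threshold on $[0,t^\star]$; on that interval the combined stability--consistency estimate delivers an $H^1$-bound on $\bfe_x$ of order $h^k$, and an inverse estimate lifts it to a $W^{1,\infty}$-bound of order $h^{k-n/2}$. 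For $k \ge 2$ and $n \le 3$ this is strictly below the threshold once $h \le h_0$, so by continuity $t^\star = T$. This is precisely why the assumption $k \ge 2$ is essential: a linear element would leave a loss of $h^{-n/2}$ that the bootstrap could not absorb. The technical heart that makes the whole scheme run is therefore the set of matrix perturbation estimates showing that a small $W^{1,\infty}$ deviation of $\bfx$ from $\bfx^*$ produces correspondingly small perturbations of $\bfM(\bfx)$, $\bfA(\bfx)$ and $\bff(\bfx)$; once those are in place, the energy estimates themselves are standard.
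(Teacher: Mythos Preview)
Your proposal is correct and follows essentially the same route as the paper: split each error into an interpolation part (handled by Proposition~\ref{lemma: interpolation}) and a finite element part, bound the latter via the matrix--vector stability estimates of Sections~\ref{section: stability velocity}--\ref{section: stability diffusion} under the $W^{1,\infty}$ smallness bootstrap (closed by the inverse estimate, which is where $k\ge 2$ enters), and feed in the $O(h^k)$ defect bounds from Section~\ref{section: defect bounds}. One small correction: there is no position defect $\bfd_x$ in this setup, because $\bfv^*$ is by definition the vector of nodal values $\dot x_j^*(t)$, so $\dot{\bfx}^* = \bfv^*$ holds exactly and only the two defects $\bfd_\bfu$ and $\bfd_{\bfv^\Om}$ arise.
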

\brev
In the following proof of error bounds, we clearly separate the stability and consistency analysis. The stability analysis, which is the significantly more difficult task in this work, borrows techniques used in \cite{KLLP17} and extends them to the present evolving bulk problem. The crucial differences are that in the stability analysis for the domain evolution the boundary has to be taken into account and the error only lives in the interior of the domain, whereas for the diffusion equation there is also an error on the boundary. The stability analysis relies on auxiliary results from Section \ref{section: auxiliary results}, which require a bound on the $W^{1,\infty}$-norm of the position errors. With the $H^1$-norm error bound together with an inverse estimate, we obtain an $\mathcal{O}(h^{k-n/2})$ error bound for the position error, which is only small for $k \ge 2$. This is why we impose the condition $k \ge 2$ in the above result. To apply this inverse estimate, we need that the interpolated exact domain $\Omh^*(t)$, which is the triangulation of $\Omt$ with the nodes $X(p_j,t)$, is quasi-uniform. Since the exact flow map $X(\cdot,t): \Om_0 \to \Omt$ is assumed to be smooth and non-degenerate, it is locally close to an invertible linear transformation, and therefore preserves admissibility of meshes on compact time intervals for sufficiently small $h \le h_0$, although the bounds in the admissibility inequalities and the largest possible mesh width may deteriorate with growing time. The boundedness of the $W^{1,\infty}$-norm of the position error is ensured with the $\mathcal{O}(h^k)$ error bound in $H^1$ norm that yields a $\mathcal{O}(h^{k-n/2})$ bound in the $W^{1,\infty}$ norm  with an inverse inequality. Therefore the assumptions of the theorem exclude a degeneration of the mesh for sufficiently small $h_0$.

The consistency analysis requires geometric estimates for the evolving isoparametric finite element method. Such estimates are mainly taken from \cite{ER13}, which are generalized to the time-dependent case in \cite{ER17pre}.
\erev

The stability proof will yield $h$-independent bounds of the errors in terms of the defects. The stability analysis is done in the matrix--vector formulation, which allows a compact and manageable representation of the computations. We use energy estimates and transport formulae to relate mass and stiffness matrices for different nodal vectors. This allows us to work with the interpolated exact domain $\Omh(\bfx^*(t))$, which is a finite element triangulation of $\Om(t)$ and only available in theoretical consideration.

In Section~\ref{section: auxiliary results} we prove auxiliary results that are used in the stability analysis, and then collect geometric estimates which are needed for the consistency analysis. In Section~\ref{section: stability velocity} we analyze stability of the semi-discrete velocity law without a diffusion equation on the evolving domain. The stability analysis of the semi-discrete diffusion equation, which requires results from Section~\ref{section: stability velocity}, is then done in Section~\ref{section: stability diffusion}. The defects are then bounded in Section~\ref{section: defect bounds} and the proof of Theorem \ref{theorem: error bound} is completed in Section~\ref{section: proof}.

\section{Auxiliary results}\label{section: auxiliary results}

\subsection{Properties of the evolving mass and stiffness matrix}
The following construction and results are similar to \cite[Section 4]{KLLP17}, \brev where similar identities are shown for surfaces only. We extend these results to the present case of domains. \erev In the stability analysis, we have to relate finite element matrices corresponding to different nodal vectors. Let $\bfx, \bfy \in \R^{3 N}$ be two nodal vectors defining discrete domains $\Omh(\bfx)$, $\Omh(\bfy)$, respectively. We set $\bfe = \bfx-\bfy$. For any $\theta \in [0,1]$, we have the intermediate domain $\Omh^\theta = \Omh(\bfy + \theta \bfe)$ which is the discrete domain corresponding to the intermediate nodal vector $\bfy + \theta \bfe$. 

For any vector $\bfw \in \R^N$, we set
\[ w_h^\theta = \sum_{j=1}^N w_j \varphi_j[\bfy + \theta \bfe] \in S_h(\bfy + \theta \bfe) \,.\]
In particular, we have the finite element function $e_h^\theta$ corresponding to $\bfe$:
\[ e_h^\theta = \sum_{j=1}^N e_j \varphi_j[\bfy + \theta \bfe] \,. \]
%
\begin{lemma}\label{lemma: matrix difference}
	In the above setting, the following identities hold for any $\bfw, \bfz \in \R^N$:
	\[ \bfw\T(\bfM(\bfx) - \bfM(\bfy))\bfz = \int_0^1 \int_{\Omh^\theta} w_h^\theta(\nb \cdot e_h^\theta) z_h^\theta \dx \dTheta \,,\]
	\[ \bfw\T(\bfA(\bfx) - \bfA(\bfy))\bfz = \int_0^1 \int_{\Omh^\theta} \nb w_h^\theta \cdot (D_{\Omh^\theta} e_h^\theta) \nb z_h^\theta \dx \dTheta \,,\]
	where $D_{\Omh^\theta} = \mathrm{trace}(\nb e_h^\theta) I_3 - (\nb e_h^\theta + (\nb e_h^\theta)\T)$.
\end{lemma}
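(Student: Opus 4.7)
The natural approach is to interpret each side as the difference between the values of a smooth function of $\theta$ at $\theta=0$ and $\theta=1$, and then invoke the fundamental theorem of calculus:
\begin{align}
\bfw\T(\bfM(\bfx)-\bfM(\bfy))\bfz &= \int_0^1 \dfdt{}{\theta}\bigl(\bfw\T \bfM(\bfy+\theta\bfe)\bfz\bigr)\dTheta,\\
\bfw\T(\bfA(\bfx)-\bfA(\bfy))\bfz &= \int_0^1 \dfdt{}{\theta}\bigl(\bfw\T \bfA(\bfy+\theta\bfe)\bfz\bigr)\dTheta.
\end{align}
By the definitions of $\bfM$, $\bfA$ and of the finite element functions $w_h^\theta, z_h^\theta, e_h^\theta \in S_h(\bfy+\theta\bfe)$, the integrands become $\frac{d}{d\theta}\int_{\Omh^\theta} w_h^\theta z_h^\theta\,\dx$ and $\frac{d}{d\theta}\int_{\Omh^\theta} \nb w_h^\theta\cdot\nb z_h^\theta\,\dx$. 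Everything then reduces to applying a transport formula in the artificial time variable $\theta$, with $e_h^\theta$ playing the role of a velocity.

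The key point, which is the finite-element analogue of the transport property used in Section 3, is that on every element the map $\Phi_T^\theta$ from the reference simplex $\hatT$ to the image simplex in $\Omh^\theta$ has nodal coefficients $y_j+\theta e_j$, so the pullback $w_h^\theta\circ\Phi_T^\theta=\sum_j w_j\hat\varphi_j$ is independent of $\theta$. Differentiating the identity $w_h^\theta(\Phi_T^\theta(\hatx))=\sum_j w_j\hat\varphi_j(\hatx)$ in $\theta$ gives that the $\theta$-material derivative $\partial^\bullet_\theta w_h^\theta:=\partial_\theta w_h^\theta+e_h^\theta\cdot\nb w_h^\theta$ vanishes, and likewise for $z_h^\theta$. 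Combining this with Reynolds' transport formula
\[
\dfdt{}{\theta}\int_{\Omh^\theta}f^\theta\dx = \int_{\Omh^\theta}\bigl(\partial^\bullet_\theta f^\theta + f^\theta\,\nb\cdot e_h^\theta\bigr)\dx
\]
applied with $f^\theta=w_h^\theta z_h^\theta$ immediately gives the mass matrix identity.

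For the stiffness identity, the extra work is in handling $\partial^\bullet_\theta(\nb w_h^\theta)$: even though $\partial^\bullet_\theta w_h^\theta=0$, the gradient does not commute with the $\theta$-material derivative. The standard commutator formula
\[
\partial^\bullet_\theta(\nb w_h^\theta)=\nb(\partial^\bullet_\theta w_h^\theta)-(\nb e_h^\theta)\T\nb w_h^\theta=-(\nb e_h^\theta)\T\nb w_h^\theta
\]
can be derived by pulling back to $\hatT$ via $\Phi_T^\theta$, differentiating the chain-rule expression $\nb w_h^\theta=(D\Phi_T^\theta)^{-\mathrm{T}}\nb_{\hatx}\hat w$ in $\theta$, and using Jacobi's formula for $\frac{d}{d\theta}(D\Phi_T^\theta)^{-\mathrm{T}}$. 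Inserting this, together with its analogue for $z_h^\theta$, into Reynolds' formula gives
\begin{align}
\dfdt{}{\theta}\!\int_{\Omh^\theta}\!\nb w_h^\theta\cdot\nb z_h^\theta\,\dx
=\!\int_{\Omh^\theta}\!\nb w_h^\theta\cdot\bigl[\mathrm{tr}(\nb e_h^\theta)I_3-\nb e_h^\theta-(\nb e_h^\theta)\T\bigr]\nb z_h^\theta\,\dx,
\end{align}
which is exactly the claimed integrand with $D_{\Omh^\theta}$ as defined. Integrating in $\theta$ concludes the second identity.

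The main obstacle is the gradient-commutator step: one must justify carefully, element-by-element via the isoparametric map $\Phi_T^\theta$, both the differentiability in $\theta$ of the pullbacks and the Jacobi-type identity $\frac{d}{d\theta}\det D\Phi_T^\theta=\det(D\Phi_T^\theta)\,(\nb\cdot e_h^\theta)\circ\Phi_T^\theta$ that underlies Reynolds' formula in this discrete setting. Everything else — the $\theta$-transport property of $w_h^\theta$ and the bookkeeping leading to $D_{\Omh^\theta}$ — is a direct analogue of the corresponding surface calculation in \cite{KLLP17}, and tacitly uses that $\bfy+\theta\bfe$ remains admissible for all $\theta\in[0,1]$ (this will be guaranteed in the stability analysis by the smallness of $\bfe$).
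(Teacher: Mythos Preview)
Your proposal is correct and follows essentially the same route as the paper: write the matrix difference as $\int_0^1 \frac{d}{d\theta}(\cdot)\,\dTheta$, apply the transport formula for integrals over the moving domain $\Omh^\theta$ with velocity $e_h^\theta$, and use the transport property $\mat_\theta w_h^\theta = \mat_\theta z_h^\theta = 0$ to eliminate the material-derivative terms. The paper's proof is terser only because it invokes the transport formula directly from \cite[p.~23]{ER17pre} rather than rederiving the Reynolds identity and the gradient-commutator step element-by-element via the isoparametric map, as you sketch; your added detail is sound and makes the mechanism more transparent.
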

\begin{proof}
	We use transport formulae from \cite[p. 23]{ER17pre}:
	\begin{align*}
		&\bfw\T (\bfA(\bfx) - \bfA(\bfy)) \bfz = \int_{\Omh(\bfx)} \nb w_h^1 \cdot \nb z_h^1 \dx - \int_{\Omh(\bfy)} \nb w_h^0 \cdot \nb z_h^0 \dx = \int_0^1 \dfdt{}{\theta} \int_{\Omh^\theta} \nb w_h^\theta \cdot \nb z_h^\theta \dx \dTheta \\
		&= \int_0^1 \int_{\Omh^\theta} \nb \mat_\theta w_h^\theta \cdot \nb z_h^\theta + \nb w_h^\theta \cdot \nb \mat_\theta z_h^\theta + \left( (\nb \cdot e_h^\theta) I_3 - \left(  \nb e_h^\theta + (\nb e_h^\theta)\T \right) \right)\nb w_h^\theta \cdot \nb z_h^\theta \dx \dTheta \,.
	\end{align*}
	The first two terms vanish thanks to the transport property. This shows the second identity, since $\nb \cdot e_h^\theta = \mathrm{trace}(\nb e_h^\theta)$. The first identity is proven similarly.
\end{proof}

A direct consequence is the following lemma, \brev where for any symmetric and positive (semi-)definite matrix $\bfK$, we denote the induced (semi-)norm on $\R^N$ by $\lVert \bfw \rVert_\bfK := (w\T \bfK \bfw)^{1/2}$.\erev
\begin{lemma}\label{lemma: Estimate matrix difference}
	If $\lVert \nb \cdot e_h^\theta \rVert_{L^\infty(\Omh^\theta)} \le \mu$ for $\theta \in [0,1]$, then
	\[ \lVert \bfw \rVert_{\bfM(\bfy+\theta \bfe)} \le e^{\frac{\mu}{2}} \lVert \bfw \rVert_{\bfM(\bfy)} \quad\text{for }\theta \in [0,1] \,. \]
	If $\lVert D_{\Omh^\theta} e_h^\theta \rVert_{L^\infty(\Omh^\theta)} \le \eta$ for $\theta \in [0,1]$, then
	\[ \lVert \bfw \rVert_{\bfA(\bfy + \theta \bfe)} \le e^{\frac{\eta}{2}} \lVert \bfw \rVert_{\bfA(\bfy)} \quad\text{for }\theta \in [0,1]\,.	 \]
\end{lemma}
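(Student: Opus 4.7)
The plan is to upgrade Lemma \ref{lemma: matrix difference} from an integral identity on $[0,1]$ into a pointwise-in-$\theta$ differential inequality, and then apply scalar Gronwall. Define the scalar functions
\[ f(\theta) := \bfw\T \bfM(\bfy + \theta \bfe) \bfw, \qquad g(\theta) := \bfw\T \bfA(\bfy + \theta \bfe) \bfw, \]
so that what must be shown is $f(\theta) \le e^{\mu\theta} f(0)$ and $g(\theta) \le e^{\eta\theta} g(0)$ for $\theta \in [0,1]$, and then take square roots and use $\theta \le 1$ to match the stated exponents $e^{\mu/2}$ and $e^{\eta/2}$.

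The derivatives of $f$ and $g$ come from exactly the same transport-formula computation carried out in the proof of Lemma \ref{lemma: matrix difference}, applied with $\bfz = \bfw$ and differentiated in $\theta$ instead of integrated from $0$ to $1$. After using the transport property to eliminate the material-derivative terms, one obtains
\[ f'(\theta) = \int_{\Omh^\theta} (w_h^\theta)^2 (\nb \cdot e_h^\theta) \dx, \qquad g'(\theta) = \int_{\Omh^\theta} \nb w_h^\theta \cdot (D_{\Omh^\theta} e_h^\theta) \nb w_h^\theta \dx. \]
Under the hypothesis $\lVert \nb \cdot e_h^\theta \rVert_{L^\infty(\Omh^\theta)} \le \mu$ the first identity gives the pointwise bound $f'(\theta) \le \mu f(\theta)$. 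For the second, since $D_{\Omh^\theta} e_h^\theta$ is symmetric, the Cauchy--Schwarz / operator-norm estimate
\[ |\nb w_h^\theta \cdot (D_{\Omh^\theta} e_h^\theta) \nb w_h^\theta| \le \lVert D_{\Omh^\theta} e_h^\theta \rVert_{L^\infty(\Omh^\theta)} \, |\nb w_h^\theta|^2 \]
combined with the hypothesis yields $g'(\theta) \le \eta g(\theta)$.

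Scalar Gronwall on $[0,\theta] \subset [0,1]$ then produces $f(\theta) \le e^{\mu\theta} f(0)$ and $g(\theta) \le e^{\eta\theta} g(0)$, and taking square roots together with $\theta \le 1$ delivers the two desired inequalities. There is no real obstacle: the only mildly non-routine point is verifying that the $L^\infty$ hypothesis on $D_{\Omh^\theta} e_h^\theta$ (a matrix-valued function, with $L^\infty$-norm understood as the essential supremum of a matrix norm) does control the quadratic form pointwise, but this is immediate from symmetry and the operator-norm inequality.
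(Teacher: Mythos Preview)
Your proof is correct and follows essentially the same approach as the paper: both arguments use the transport formula from Lemma~\ref{lemma: matrix difference} to bound the growth of the squared norm by itself, and then conclude by Gronwall. The only cosmetic difference is that you differentiate in $\theta$ and apply the differential form of Gronwall, whereas the paper writes the integral identity on $[0,\tau]$ and applies the integral form; these are equivalent.
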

\begin{proof}\brev
	We use the previous lemma and an $L^2$-$L^\infty$-$L^2$-estimate and compute for $0 \le \tau \le 1$:
	\begin{align}
	\lVert \bfw \rVert_{\bfM(\bfy + \tau \bfe)}^2 - \lVert \bfw \rVert_{\bfM(\bfy)}^2 &= \bfw\T \left( \bfM(\bfy+\tau\bfe) - \bfM(\bfy) \right) \bfw \\
	&= \int_0^\tau \int_{\Omh^\theta} w_h^\theta \nb \cdot e_h^\theta w_h^\theta \dx \dTheta \\
	&= \int_0^\tau \lVert \bfw \rVert_{\bfM(\bfy+\theta \bfe)}^2 \lVert \nb \cdot e_h^\theta \rVert _{L^\infty(\Omh^\theta)} \dTheta \\
	&\le \mu \int_0^\tau \lVert \bfw \rVert_{\bfM(\bfy+\theta \bfe)}^2 \dTheta \,.
	\end{align}
	A Gronwall argument shows the first result. The second estimate is shown analogously. \erev	
\end{proof}

\begin{lemma}\label{lemma: aux 2}
	Assume that
	\begin{align}\label{condition W1inf}
		\lVert \nb e_h^0 \rVert_{L^\infty(\Om_h(\bfy))} \le \frac{1}{2} \,.
	\end{align}
	Then, for $0 \le \theta \le 1$ the function $w_h^\theta = \sum_{j=1}^N w_j \varphi_j[\bfy + \theta \bfe]$ on $\Om_h^\theta$ is bounded by
	\[ \lVert \nb w_h^\theta \rVert_{L^p(\Om_h^\theta)} \le c_p \lVert \nb w_h^0 \rVert_{L^p(\Om_h^0)} \,, \]
	\[ \lVert w_h^\theta \rVert_{L^p(\Om_h^\theta)} \le \widetilde{c}_p \lVert w_h^0 \rVert_{L^p(\Om_h^0)} \,, \]
	for $1 \le p \le \infty$, where $c_p$ and $\widetilde{c}_p$ depend only on $p$.
\end{lemma}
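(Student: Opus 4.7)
The idea is to reduce everything to a change of variables between $\Omega_h^0$ and $\Omega_h^\theta$ that is essentially the affine-on-each-element map obtained from the basis-function transport property. The plan is as follows.

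First, I would introduce, for $\theta \in [0,1]$, the piecewise polynomial map
\[ X_\theta : \Omega_h^0 \to \Omega_h^\theta, \qquad X_\theta(p) = p + \theta\, e_h^0(p), \]
where $e_h^0 = \sum_j e_j \varphi_j[\mathbf{y}] \in S_h(\mathbf{y})^3$ as in the setup. By the transport property of the basis functions (already used in the excerpt), $X_\theta$ maps the node $y_j$ to $y_j + \theta e_j$, and more generally $w_h^\theta \circ X_\theta = w_h^0$ for every nodal vector $\mathbf{w}$, since both sides agree at the nodes and are polynomial of the same degree on each simplex after the standard pull-back.

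Next I would show that $X_\theta$ is a bi-Lipschitz homeomorphism with Jacobian controlled uniformly in $\theta \in [0,1]$. Its Jacobian matrix is $\nabla X_\theta = I_3 + \theta\, \nabla e_h^0$. The assumption $\lVert \nabla e_h^0 \rVert_{L^\infty(\Omega_h(\mathbf{y}))} \le \tfrac{1}{2}$ together with a Neumann series gives
\[ \lVert (\nabla X_\theta)^{-1} \rVert_{L^\infty(\Omega_h^0)} \le 2, \qquad \tfrac{1}{C} \le \det(\nabla X_\theta) \le C, \]
for a constant $C$ independent of $\theta$ and of the mesh. In particular $X_\theta$ is a bijection from $\Omega_h^0$ onto $\Omega_h^\theta$, injectivity being local plus a connectivity/continuity argument at the element interfaces (the map is continuous and affine-like on each simplex).

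With $w_h^\theta \circ X_\theta = w_h^0$ in hand, the chain rule yields
\[ (\nabla w_h^\theta)\circ X_\theta = \nabla w_h^0 \,(\nabla X_\theta)^{-1}, \]
and a standard change of variables produces, for $1 \le p < \infty$,
\[ \int_{\Omega_h^\theta} |w_h^\theta|^p \,\mathrm{d}x = \int_{\Omega_h^0} |w_h^0|^p\, |\det \nabla X_\theta|\, \mathrm{d}p, \qquad \int_{\Omega_h^\theta} |\nabla w_h^\theta|^p \,\mathrm{d}x \le C \int_{\Omega_h^0} |\nabla w_h^0|^p \,\mathrm{d}p, \]
using the bounds on $\det \nabla X_\theta$ and $(\nabla X_\theta)^{-1}$. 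The case $p=\infty$ is immediate since $X_\theta$ is a bijection: $\lVert w_h^\theta \rVert_{L^\infty(\Omega_h^\theta)} = \lVert w_h^0 \rVert_{L^\infty(\Omega_h^0)}$, and similarly for $\nabla w_h^\theta$ up to the factor $\lVert (\nabla X_\theta)^{-1} \rVert_{L^\infty} \le 2$. This yields the two asserted inequalities with constants depending only on $p$.

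The main obstacle is the verification that $\nabla X_\theta$ is uniformly invertible with Jacobian bounded away from $0$ and $\infty$. Once the assumption $\lVert \nabla e_h^0 \rVert_{L^\infty}\le \tfrac12$ is translated into the corresponding bound on $I_3 + \theta \nabla e_h^0$, the rest is a mechanical change-of-variables computation. A small technical point to check is that the elementwise bi-Lipschitz map $X_\theta$ indeed defines a global homeomorphism $\Omega_h^0 \to \Omega_h^\theta$; this follows because $X_\theta$ coincides at interelement faces (same nodal data and same polynomial trace) and because under the smallness assumption the nodal images $\{y_j + \theta e_j\}$ still form an admissible mesh.
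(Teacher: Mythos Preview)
Your proposal is correct and follows essentially the same approach as the paper: the paper introduces the same map $Y_h^\theta(p_h) = p_h + \theta e_h^0(p_h)$, uses the transport property to obtain $w_h^\theta \circ Y_h^\theta = w_h^0$, differentiates to get the chain-rule identity, bounds $(\nabla Y_h^\theta)^{-1}$ via the smallness assumption (the paper shows $|DY_h^\theta z| \ge \tfrac{1}{2}|z|$ directly rather than invoking a Neumann series), and then applies the change-of-variables formula for $1 \le p < \infty$ and the transport property directly for $p = \infty$. Your additional remark about global injectivity of $X_\theta$ is a point the paper does not spell out.
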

\begin{proof}
	We parametrize $\Omh^\theta$ over $\Omh^0$:
	\begin{align*}
		Y_h^\theta(p_h) &= Y_h(p_h,\theta) = \sum_{j=1}^N (y_j + \theta e_j) \varphi_j[\bfy](p_h) \qquad \left(p_h \in \Omh^0 = \Omh(\bfy) \right) \\
		&=\sum_{j=1}^N y_j \varphi_j[\bfy](p_h) + \theta \sum_{j=1}^N e_j \varphi_j[\bfy](p_h) = p_h + \theta e_h^0(p_h) \,,
	\end{align*}
	where we have used that $Y_h^0(p_h) = p_h$. Differentiating with respect to $p_h$ yields
	\begin{align}\label{eq: misc 1}
		D Y_h^\theta(p_h) = I + \theta D e_h^0(p_h) \,.
	\end{align}
	By the transport property, we have $w_h^\theta(Y_h^\theta(p_h)) = w_h^0(Y_h^0(p_h)) = w_h^0(p_h)$. Differentiation with respect to $p_h$ yields
	\begin{align}\label{eq: misc 2}
		D w_h^\theta(Y_h^\theta(p_h)) D Y_h^\theta(p_h) = D w_h^0(p_h) \,.
	\end{align}
	From \eqref{eq: misc 1} we have under the assumption $\lVert \nb e_h^0 \rVert_{L^\infty(\Omh(\bfy))} \le \frac{1}{2}$:
	\begin{align*}
		| D Y_h^\theta(p_h) z| = |z + \theta (\nb e_h^0)\T z| \ge |z| - \theta |(\nb e_h^0)\T z| \ge \frac{1}{2} |z| \,.
	\end{align*}
	Thus, the matrix $D Y_h^\theta(p_h)$ is invertible and we have with \eqref{eq: misc 2}
	\[ D w_h^\theta(Y_h^\theta(p_h)) = D w_h^0(p_h) \left(D Y_h^\theta(p_h)\right)^{-1} \,,\]
	implying $|D w_h^\theta (Y_h^\theta(p_h))| \le 2 |D w_h^0(p_h)|$ and thus
	\[ \lVert \nb w_h^\theta \rVert_{L^\infty(\Omh^\theta)} \le 2 \lVert \nb w_h^0 \rVert_{L^\infty(\Omh^0)} \,.\]
	For $1 \le p < \infty$, we use the transformation formula and the fact that $\lVert D e_h^0(p_h) \rVert_{L^\infty(\Omh^0)} \le \frac{1}{2}$ to obtain
	\begin{align*}
		\lVert \nb w_h^\theta \rVert^p_{L^p(\Omh^\theta)} &= \int_{\Omh^\theta} | D w_h^\theta(y_h^\theta)|^p \mathrm{d} y_h^\theta =\int_{\Omh^0} \left| D w_h^\theta (Y_h^\theta(p_h)) \right|^p \left| \det D Y_h^\theta (p_h) \right| \mathrm{d} p_h \\
		&= \int_{\Omh^0}|D w_h^0(p_h) (D Y_h^\theta(p_h))^{-1}|^p |\det D Y_h(p_h)| \mathrm{d} p_h \\
		&\le c \int_{\Omh^0} |D w_h^0(p_h)|^p \mathrm{d} p_h = c \lVert \nb w_h^0 \rVert_{L^p(\Omh^0)}^p \,.
	\end{align*}
	For the second estimate, we note that the transport property immediately implies
	\[ \lVert w_h^\theta \rVert_{L^\infty(\Omh^\theta)} = \left\lVert w_h^0 \right\rVert_{L^\infty(\Omh^0)} \,. \]
	For $1 \le p < \infty$, we use the transformation formula and the same arguments as above.
\end{proof}

Another consequence of Lemma~\ref{lemma: Estimate matrix difference} is the following.
\begin{lemma}\label{lemma: Derivative estimate}
	Let $\bfx^*(t)$ be the vector of the exact positions $x_j^*(t) = X(x_j^0,t)$. Then, we have for all $\bfw,\bfz \in \R^N$:
	\[ \bfw\T \left( \dfdt{}{t} \bfM(\bfx^*(t)) \right) \bfz \le c \lVert \bfw \rVert_{\bfM(\bfx^*(t))} \lVert \bfz \rVert_{\bfM(\bfx^*(t))} \,,\]
	\[ \bfw\T \left( \dfdt{}{t} \bfA(\bfx^*(t)) \right) \bfz \le c \lVert \bfw \rVert_{\bfA(\bfx^*(t))} \lVert \bfz \rVert_{\bfA(\bfx^*(t))} \,. \]
	\brev The constant $c$ depends on the $W^{1,\infty}(\Om_T)$-norm of $v$ the dimension $n$ and the length $T$ of the time interval, but is independent of $h$ and $t$. \erev
\end{lemma}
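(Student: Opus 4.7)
}
The plan is to mimic the proof of Lemma \ref{lemma: matrix difference}, but with the parameter $\theta$ replaced by time $t$ and the variation vector $\bfe$ replaced by the velocity $\bfv^*(t)=\dot{\bfx}^*(t)$. Concretely, since $\bfw,\bfz$ are time-independent nodal vectors, the transport property gives $\mat_h^* w_h^* = 0$ and $\mat_h^* z_h^* = 0$, where the discrete material derivative is taken with respect to the velocity $v_h^*(\cdot,t)=\sum_j v(x_j^*(t),t)\,\varphi_j[\bfx^*(t)]$ associated to the evolving nodal vector $\bfx^*(t)$.

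Applying the transport formula from \cite[p.~23]{ER17pre} to the integrals defining $\bfM(\bfx^*(t))$ and $\bfA(\bfx^*(t))$, I would obtain
\begin{align}
\bfw\T\!\left(\dfdt{}{t}\bfM(\bfx^*(t))\right)\!\bfz &= \int_{\Omh^*(t)} w_h^*\,(\nb\cdot v_h^*)\,z_h^*\,\dx, \\
\bfw\T\!\left(\dfdt{}{t}\bfA(\bfx^*(t))\right)\!\bfz &= \int_{\Omh^*(t)} \nb w_h^*\cdot D_{\Omh^*}(v_h^*)\,\nb z_h^*\,\dx,
\end{align}
with $D_{\Omh^*}(v_h^*)=\mathrm{trace}(\nb v_h^*)I_3-(\nb v_h^*+(\nb v_h^*)\T)$, exactly as in Lemma \ref{lemma: matrix difference} (the two material-derivative terms vanish by transport). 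An $L^2$--$L^\infty$--$L^2$ estimate then yields the bounds
\[
\bigl|\text{LHS}_\bfM\bigr|\le \lVert\nb\cdot v_h^*\rVert_{L^\infty(\Omh^*(t))}\lVert\bfw\rVert_{\bfM(\bfx^*(t))}\lVert\bfz\rVert_{\bfM(\bfx^*(t))},
\]
and similarly $|\text{LHS}_\bfA|\le \lVert D_{\Omh^*}(v_h^*)\rVert_{L^\infty(\Omh^*(t))}\lVert\bfw\rVert_{\bfA(\bfx^*(t))}\lVert\bfz\rVert_{\bfA(\bfx^*(t))}$.

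It remains to bound $\lVert v_h^*\rVert_{W^{1,\infty}(\Omh^*(t))}$ uniformly in $h$ and $t$. Since $v_h^*$ is the nodal interpolant of $v(\cdot,t)$ on the triangulation $\Omh^*(t)$, a standard $W^{1,\infty}$ interpolation estimate bounds $\lVert \nb(v-v_h^*)\rVert_{L^\infty}$ by $h^k\lVert v\rVert_{W^{k+1,\infty}}$, so that $\lVert\nb v_h^*\rVert_{L^\infty}\le \lVert \nb v\rVert_{L^\infty(\Omt)}+\mathcal{O}(h^k)$; this gives the claimed $h$-independent constant depending on $\lVert v\rVert_{W^{1,\infty}(\Om_T)}$ and the dimension $n$ (and on $T$ via the length of the time interval when the estimate is used in Gronwall-type arguments).

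The main obstacle, though technically routine, is precisely this last step: one must ensure that the mesh $\Omh^*(t)$ remains admissible for all $t\in[0,T]$ (as already discussed after Theorem \ref{theorem: error bound}, this follows from the smoothness and non-degeneracy of the flow $X(\cdot,t)$ for $h\le h_0$ small enough), so that the $W^{1,\infty}$ interpolation estimate holds with a constant uniform in $t$. Once the $W^{1,\infty}$ bound on $v_h^*$ is in place, both inequalities follow by direct substitution.
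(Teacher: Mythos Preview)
Your proposal is correct and follows essentially the same approach as the paper: the paper's proof merely refers to \cite[Lemma~4.1]{DLM12} and says the argument transfers directly using the techniques of Lemma~\ref{lemma: Estimate matrix difference}, which is precisely the transport-formula-plus-$L^2$--$L^\infty$--$L^2$ argument you spell out. Your treatment of the $W^{1,\infty}$ bound on $v_h^*$ via interpolation stability is also the argument used elsewhere in the paper (see the reference to \cite[Theorem~4.1]{Ber89} in the proof of Lemma~\ref{lem: stability}).
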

\begin{proof}
	The proof can be found in \cite[Lemma 4.1]{DLM12} for surfaces and can directly be transferred to the present situation, using arguments from the proof of Lemma~\ref{lemma: Estimate matrix difference}.
\end{proof}

\subsection{Geometric estimates}
We collect geometric estimates that are used later in the consistency analysis. For a finite element function $\eta_h: \Omh^*(t) \to \R$, its lift is denoted by $\eta_h^\ell: \Om(t) \to \R$ (see Definition~\ref{def: lift}). The following lemma shows that the norms of finite element functions and their lifts are equivalent. A proof can be found in \cite[Proposition 4.9]{ER13}, based on \cite{CR72}.
\begin{lemma}\label{lem: norm equivalency}
	Let $\eta_h: \Omh^*(t) \to \R$ with lift $\eta_h^\ell: \Om(t) \to \R$. Then there exist constants $c_1, c_2 > 0$ such that
	\begin{alignat}{2}
		c_1 \lVert \eta_h \rVert_{L^2(\Omh^*(t))} & \le \lVert \eta_h^\ell \rVert_{L^2(\Om(t))} &&\le c_2 \lVert \eta_h \rVert_{L^2(\Omh^*(t))} \,, \\
		c_1 \lVert \nb \eta_h \rVert_{L^2(\Omh^*(t))} & \le \lVert \nb \eta_h^\ell \rVert_{L^2(\Om(t))} &&\le c_2 \lVert \nb \eta_h \rVert_{L^2(\Omh^*(t))} \,.
	\end{alignat}
	\brev The constant $c$ depends on the dimension $n$, the length $T$ of the time interval and the geometry of $\Om_T$ but is independent of $h$ and $t$. \erev
\end{lemma}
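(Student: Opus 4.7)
The plan is to prove the norm equivalences element by element via change of variables, using the piecewise-defined lift map $\Lambda_h(\cdot,t)$ introduced in Section~\ref{subsec: high order domain approximation}. On interior curved simplices with at most one boundary node, $\Lambda_h$ is just the identity, so there is nothing to prove there. The work is concentrated on boundary elements, where $\Lambda_h$ is of class $C^k$ with uniformly bounded derivatives (by \cite[Lemma 4.6]{ER13}) and is $\mathcal{O}(h^{k+1})$-close to the identity in $C^0$ and $\mathcal{O}(h^k)$-close in $C^1$.

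First, for the $L^2$ bound, I would write
\begin{equation*}
\lVert \eta_h^\ell \rVert_{L^2(\Om(t))}^2 \;=\; \sum_{T \in \mathcal{T}_h^{(k)}} \int_{\Lambda_h(T,t)} (\eta_h^\ell)^2\,\dx \;=\; \sum_{T \in \mathcal{T}_h^{(k)}} \int_T \eta_h^2\, \lvert \det D\Lambda_h(\cdot,t) \rvert \,\dx,
\end{equation*}
and invoke the fact that on interior elements $|\det D\Lambda_h| = 1$, whereas on boundary elements the geometric estimates of \cite[Proposition 4.7]{ER13} give $\lvert \det D\Lambda_h - 1 \rvert \le C h$ uniformly in $h \le h_0$ and $t \in [0,T]$. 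Consequently $\lvert \det D\Lambda_h \rvert$ is bounded above and below by positive constants independent of $h$ and $t$, and the double inequality for the $L^2$-norm follows.

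For the gradient estimate, I would use the chain rule: at $y = \Lambda_h(x,t)$,
\begin{equation*}
\nb \eta_h^\ell(y) \;=\; (D\Lambda_h(x,t))^{-\mathrm{T}}\, \nb \eta_h(x),
\end{equation*}
so that after applying the transformation formula again,
\begin{equation*}
\lVert \nb \eta_h^\ell \rVert_{L^2(\Om(t))}^2 \;=\; \sum_{T} \int_T \bigl\lvert (D\Lambda_h)^{-\mathrm{T}} \nb \eta_h \bigr\rvert^2 \lvert \det D\Lambda_h \rvert\,\dx.
\end{equation*}
On interior elements $D\Lambda_h = I$ and the integrand equals $|\nb \eta_h|^2$. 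On boundary elements, I would again use that $\lVert D\Lambda_h - I \rVert_{L^\infty} \le Ch$ and $\lVert (D\Lambda_h)^{-1} \rVert_{L^\infty}$, $\lvert \det D\Lambda_h \rvert$ are uniformly bounded above and below; combining these estimates yields the desired two-sided bound. The dependence of the constants is precisely on $n$, on $T$ through the smoothness of the flow $X(\cdot,t)$, and on the geometry of $\Om_T$ (through the bounds on $\Lambda_h$ and its derivatives), but not on $h$ or $t$, provided $h \le h_0$ is small enough that $\Omh^*(t)$ remains quasi-uniform.

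The main obstacle is really bookkeeping rather than mathematics: one must verify that the constants appearing in the geometric estimates of $D\Lambda_h$ can be chosen uniformly in $t \in [0,T]$. This is where the smoothness assumption on $X$ and the non-degeneracy of the mesh on the compact time interval are used; since the exact flow is smooth and the interpolated exact triangulation remains admissible up to $h_0$ (possibly $h_0$-depending on $T$), the geometric bounds of \cite[Proposition 4.7, Lemma 4.6]{ER13}, originally stated for a fixed domain, carry over with constants depending on $T$ but not on $t$. Hence the lemma follows with $c_1, c_2$ of the stated form.
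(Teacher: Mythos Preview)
Your argument is correct and is precisely the standard change-of-variables proof underlying the cited result. The paper itself does not give a proof but simply refers to \cite[Proposition 4.9]{ER13} (based on \cite{CR72}); your sketch spells out that argument in the present time-dependent setting and correctly identifies the only nontrivial point, namely the $t$-uniformity of the bounds on $D\Lambda_h$, which follows from the smoothness of the exact flow and the admissibility of the interpolated mesh on $[0,T]$.
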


We define discrete analogues of the bilinear forms $m$ and $a$, defined in \eqref{eq: bilinear forms}: For $\eta_h$, $\chi_h$: $\Omh^*(t) \to \R$, we define
\begin{align}
	m_h^*(\eta_h,\chi_h) &= \int_{\Omh^*(t)} \eta_h \chi_h \,,\\
	a_h^*(\eta_h,\chi_h) &= \int_{\Omh^*(t)} \nb \eta_h \cdot \nb \chi_h \,.
\end{align}

\brev The following lemma estimates the difference between the discrete bilinear form on the interpolated exact domain and the exact bilinear form of the lifted functions on the exact domain. A proof can be found in \cite{ER17pre}. \erev

\begin{lemma}[Geometric approximation errors]\label{lemma: error bilinear forms} For $\eta_h, \chi_h \in S_h(\bfx^*(t))$ with corresponding lifts $\eta_h^\ell$, $\chi_h^\ell$, the following estimates hold: there exists a constant $c$ such that
	\begin{align}
		\left| m_h^*(\eta_h,\chi_h) - m(\eta_h^\ell,\chi_h^\ell) \right| &\le c h^k \lVert \eta_h^\ell \rVert_{L^2(\Omt)} \lVert \chi_h^\ell  \rVert_{L^2(\Omt)} \,, \\
		\left| a_h^*(\eta_h,\chi_h) - a(\eta_h^\ell,\chi_h^\ell) \right| &\le c h^k \lVert \nb \eta_h^\ell \rVert_{L^2(\Omt)} \lVert \nb \chi_h^\ell \rVert_{L^2(\Omt)} \,.
	\end{align}	
	\brev The constant $c$ depends on the dimension $n$, the length $T$ of the interval and the geometry of $\Om_T$ but is independent of $h$ and $t$. \erev
\end{lemma}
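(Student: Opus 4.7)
The plan is to transfer both integrals to the common reference domain $\Omh^*(t)$ via the lift map $\Lambda_h(\cdot,t)\colon \Omh^*(t)\to \Omt$ from Section~\ref{subsec: high order domain approximation}, and then to reduce everything to pointwise $L^\infty$ bounds on the deviation of the associated Jacobian quantities from their Euclidean counterparts. Since $\Lambda_h$ is the identity on interior simplices and only deforms the boundary strip, all element-wise contributions away from $\Gah^*(t)$ cancel exactly.

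Concretely, I would apply the change of variables formula to obtain
\begin{align}
m(\eta_h^\ell,\chi_h^\ell) &= \int_{\Omh^*(t)} \eta_h\,\chi_h\, |\det D\Lambda_h|\dx \,,\\
a(\eta_h^\ell,\chi_h^\ell) &= \int_{\Omh^*(t)} \nb \eta_h \cdot \bigl( |\det D\Lambda_h| (D\Lambda_h)^{-1} (D\Lambda_h)^{-\mathrm{T}} \bigr) \nb \chi_h \dx \,,
\end{align}
where in the second identity the chain rule has been used to transform the gradients of the lifts. Subtracting $m_h^*(\eta_h,\chi_h)$ and $a_h^*(\eta_h,\chi_h)$ respectively and applying an $L^\infty$--$L^2$--$L^2$ Cauchy--Schwarz estimate then yields
\begin{align}
|m_h^*(\eta_h,\chi_h)-m(\eta_h^\ell,\chi_h^\ell)| &\le \bigl\| 1-|\det D\Lambda_h|\bigr\|_{L^\infty(\Omh^*(t))} \|\eta_h\|_{L^2(\Omh^*(t))} \|\chi_h\|_{L^2(\Omh^*(t))} \,,\\
|a_h^*(\eta_h,\chi_h)-a(\eta_h^\ell,\chi_h^\ell)| &\le \bigl\| I - |\det D\Lambda_h|(D\Lambda_h)^{-1}(D\Lambda_h)^{-\mathrm{T}} \bigr\|_{L^\infty(\Omh^*(t))} \|\nb \eta_h\|_{L^2(\Omh^*(t))} \|\nb \chi_h\|_{L^2(\Omh^*(t))} \,.
\end{align}
Finally, Lemma~\ref{lem: norm equivalency} converts the $L^2$ norms on $\Omh^*(t)$ to the corresponding norms of the lifts on $\Omt$, up to $h$-independent constants.

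The core of the argument is therefore the geometric estimate
\[
\bigl\| 1-|\det D\Lambda_h|\bigr\|_{L^\infty(\Omh^*(t))} + \bigl\| I-|\det D\Lambda_h|(D\Lambda_h)^{-1}(D\Lambda_h)^{-\mathrm{T}}\bigr\|_{L^\infty(\Omh^*(t))} \le c\,h^k.
\]
This is the main obstacle, and it is not something I would reprove here. It relies on the fact that on each boundary simplex $T$ the isoparametric map $\Phi_T^{(k)}$ differs from the exact curved map $\Phi_T^{\mathrm{c}}$ by $\mathcal{O}(h^{k+1})$ in $L^\infty$ and by $\mathcal{O}(h^k)$ in $W^{1,\infty}$, with the supports concentrated on the boundary layer of elements whose measure is itself $\mathcal{O}(h)$ smaller than $|\Omh^*(t)|$. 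These bounds are exactly the content of the geometric approximation results established in \cite{ER13} for the stationary case and carried over to the time-dependent setting in \cite{ER17pre}; I would simply invoke them. The time-uniformity of the constant follows from smoothness of $X(\cdot,t)$ and compactness of $[0,T]$, which keeps the interpolated exact triangulation quasi-uniform and its curved elements uniformly non-degenerate under the assumptions of Theorem~\ref{theorem: error bound}.
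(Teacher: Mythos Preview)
The paper does not give its own proof of this lemma; it simply states that a proof can be found in \cite{ER17pre}. Your outline---pulling both bilinear forms back to $\Omh^*(t)$ via the change of variables $\Lambda_h$, applying an $L^\infty$--$L^2$--$L^2$ estimate, and then invoking the $\mathcal{O}(h^k)$ bounds on $1-|\det D\Lambda_h|$ and $I-|\det D\Lambda_h|(D\Lambda_h)^{-1}(D\Lambda_h)^{-\mathrm{T}}$ from \cite{ER13,ER17pre}---is exactly the standard argument underlying that reference, so your proposal is consistent with what the paper cites.
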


\section{Stability of the semi-discrete harmonic velocity law}\label{section: stability velocity}
We will start with analyzing stability of the semi-discrete velocity law without the diffusion equation, since the domain evolution is \brev independent of the parabolic equation, see Remark~\ref{remark: domain motion independent}\erev. The stability analysis of the semi-discrete diffusion equation, which is based on the following results, is presented in the next section.

We consider the nodal vectors $\bfv, \bfx \in \R^{3N}$ which satisfy
\begin{align}\label{eq: ODE system}
	\begin{aligned}
		(I_3 \otimes \bfA_{22}(\bfx)) \bfv^\Om &= -(I_3 \otimes \bfA_{21}(\bfx)) \bfv^\Ga \,, \\
		\dot{\bfx} = \bfv.
	\end{aligned}
\end{align}
with given $\bfv^\Ga$. We denote by
\[ \bfx^*(t) =\begin{pmatrix} \bfx^{\Ga,*}(t) \\ \bfx^{\Om,*}(t) \end{pmatrix} \]
the vector of the exact positions at time $t \in [0,T]$. Note that $x_j^*(t) = x_j(t)$ for all $j = 1,\ldots,N_\Ga$ since $\bfv^\Ga$ is given explicitly. i.e. $\bfx^\Ga(t) = \bfx^{\Ga,*}(t)$. 

We consider the interpolated exact velocity $v_h^*(\cdot,t) = \sum_{j=1}^N v_j^*(t) \varphi_j[\bfx^*(t)]$ with the corresponding nodal vector
\[ \bfv^*(t) = \begin{pmatrix} \bfv^{\Ga,*}(t) \\ \bfv^{\Om,*}(t) \end{pmatrix}\,.\]
Note again that $\bfv^{\Ga,*}(t) = \bfv^{\Ga}(t)$.

\subsection{Error equations}
The vectors $\bfx^*$ and $\bfv^*$ satisfy \eqref{eq: ODE system} up to a defect $\bfd_{\bfv^\Om}$:
\begin{align}\label{eq: ODE system defects}
\begin{aligned}
(I_3 \otimes \bfA_{22}(\bfx^*)) \bfv^{\Om,*} &= - (I_3 \otimes \bfA_{21}(\bfx^*)) \bfv^{\Ga,*} + \bfM_{22}(\bfx^*) \bfd_{\bfv^\Om} \,, \\
\dot{\bfx}^* &= \bfv^* \,.
\end{aligned}
\end{align}
We set $\bfd_\bfv = (\bfd_{\bfv^\Ga},\bfd_{\bfv^\Om}) \in \R^{3N}$ with $\bfd_{\bfv^\Ga} = 0 \in \R^{3 N_\Ga}$. This notation will be useful in the stability analysis. The defect $\bfd_{\bfv}$ corresponds to a finite element function $d_h^v(\cdot,t) = \sum_{j=1}^N d_j^v(t) \varphi_j[\bfx^*(t)] \in \Sohxt^3$. We denote the errors in the nodes and in the velocity by $\bfe_{\bfx^\Om} = \bfx^\Om - \bfx^{\Om,*}$, $\bfe_{\bfv^\Om} = \bfv^\Om - \bfv^{\Om,*}$ and use the notation
\[ \bfe_\bfx = \begin{pmatrix} 0 \\ \bfe_{\bfx^\Om} \end{pmatrix} = \begin{pmatrix} \bfe_{\bfx^\Ga} \\ \bfe_{\bfx^\Om}\end{pmatrix} \,,\quad  
\bfe_\bfv = \begin{pmatrix} 0 \\ \bfe_{\bfv^\Om} \end{pmatrix} = \begin{pmatrix} \bfe_{\bfv^\Ga} \\ \bfe_{\bfv^\Om}\end{pmatrix} \,.\]
In the following, we write $\bfA(\bfx)$ instead of $I_3 \otimes \bfA(\bfx)$, for brevity. We rewrite \eqref{eq: ODE system} as
\begin{align}\label{eq: ODE system rewrite}
	\bfA_{22}(\bfx^*) \bfv^\Om = -(\bfA_{22}(\bfx) - \bfA_{22}(\bfx^*)) \bfv^{\Om,*}
	- (\bfA_{22}(\bfx) - \bfA_{22}(\bfx^*))\bfe_{\bfv^\Om} - \bfA_{21}(\bfx) \bfv^\Ga \,.
\end{align}
Subtracting \eqref{eq: ODE system defects} from \eqref{eq: ODE system rewrite} and using $\bfv^\Ga = \bfv^{\Ga,*}$ yields the error equations
\begin{equation}\label{eq: error}
\begin{aligned}
\bfA_{22}(\bfx^*) \bfe_{\bfv^\Om} & = &&-(\bfA_{22}(\bfx) - \bfA_{22}(\bfx^*)) \bfv^{\Om,*} - (\bfA_{22}(\bfx) - \bfA_{22}(\bfx^*)) \bfe_{\bfv^\Om} \\
& &&-(\bfA_{21}(\bfx) - \bfA_{21}(\bfx^*)) \bfv^{\Ga,*} - \bfM_{22}(\bfx^*) \bfd_{\bfv^\Om} \,, \\
\dot{\bfe}_{\bfx^\Om} &=&&\bfe_{\bfv^\Om} \,.
\end{aligned}
\end{equation}

\subsection{Dual norms}
We recall that the mass and stiffness matrices $\bfM_{22}(\bfx)$ and $\bfA_{22}(\bfx)$, respectively, induce norms on $\Sohx$. Note that $\bfA_{22}(\bfx)$ defines a norm on $\Sohx$, whereas $\bfA(\bfx)$ defines only a semi-norm on $S_h(\bfx)$.
We define the dual norm
\begin{align}\label{eq: dual norm}
	\begin{aligned}
		\lVert d_h^v \rVert_{H_{0,h}^{-1}(\Omh(\bfx^*))} &= \sup_{0 \ne \psi_h \in S_{0,h}(\bfx^*)^3} \frac{\int_{\Omh(\bfx^*)} d_h^v \cdot \psi_h \dx}{\lVert \psi_h \rVert_{H_0^1(\Omh(\bfx^*))}} = \sup_{0 \ne \bfz \in \R^{3 N_\Om}} \frac{\bfd_{\bfv^\Om}\T \bfM_{22}(\bfx^*) \bfz}{(\bfz\T \bfA_{22}(\bfx^*) \bfz)^{1/2}} \\
		&=\sup_{0 \ne \bfw \in \R^{3 N_\Om}} \frac{\bfd_{\bfv^\Om}\T \bfM_{22}(\bfx^*) \bfA_{22}(\bfx^*)^{-1/2} \bfw}{(\bfw\T \bfw)^{1/2}}  =\lVert \bfA_{22}(\bfx^*)^{-1/2} \bfM_{22}(\bfx^*) \bfd_{\bfv^\Om} \rVert_2 \\
		&=\left( \bfd_{\bfv^\Om}\T \bfM_{22}(\bfx^*) \bfA_{22}(\bfx^*)^{-1} \bfM_{22}(\bfx^*) \bfd_{\bfv^\Om}\right)^{\frac{1}{2}} =: \lVert \bfd_{\bfv^\Om} \rVert_{\star,\bfx^*} \,.
	\end{aligned}
\end{align}

\subsection{Stability estimate}
We are now ready to state and prove the first main stability result. The following stability result holds under a smallness assumption on the defect. It will be proven in Section~\ref{section: defect bounds} that this assumption is satisfied for $\kappa = k \ge 2$, where $k$ is the order of the finite element method.

\begin{lemma}\label{lem: stability}
	Assume that, for some $\kappa > \frac{3}{2}$, the defect is bounded as follows:
	\begin{align}\label{eq: defect assumption1}
		\lVert \bfd_{\bfv^\Om}(t) \rVert_{\star,\bfx^*(t)} \le c h^\kappa \,,\quad t \in [0,T] \,.
	\end{align}
	Then there exists an $h_0 > 0$ such that for $h \le h_0$ and $t \in [0,T]$, the following error bounds hold.
	\begin{align}
		\lVert \bfe_{\bfx^\Om}(t) \rVert_{\bfA_{22}(\bfx^*(t))}^2 &\le c \int_0^t \lVert \bfd_{\bfv^\Om}(s) \rVert_{\star,\bfx^*(s)}^2 \mathrm{d} s \,, \label{error bound 1} \\
		\lVert \bfe_{\bfv^\Om}(t) \rVert_{\bfA_{22}(\bfx^*(t))}^2 &\le c \lVert \bfd_{\bfv^\Om}(t) \rVert_{\star,\bfx^*(t)}^2 + c \int_0^t \lVert \bfd_{\bfv^\Om}(s) \rVert_{\star,\bfx^*(s)}^2 \mathrm{d} s \label{error bound 2}\,.
	\end{align}
\end{lemma}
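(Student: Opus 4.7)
\emph{Overall approach.} I plan to couple an energy estimate for $\bfe_{\bfv^\Om}$, obtained by testing the first line of \eqref{eq: error} with $\bfe_{\bfv^\Om}$ itself, with a Gronwall estimate for $\lVert \bfe_{\bfx^\Om} \rVert_{\bfA_{22}(\bfx^*)}^2$ driven by the relation $\dot \bfe_{\bfx^\Om} = \bfe_{\bfv^\Om}$, and to handle the nonlinear dependence of the matrices on $\bfx$ through a bootstrap on $\lVert \nb e_{\bfx,h}^0 \rVert_{L^\infty}$. Concretely, I set
\[ t^* := \sup \bigl\{ t \in [0,T] : \lVert \nb e_{\bfx,h}^0(s) \rVert_{L^\infty(\Omh(\bfx^*(s)))} \le h^{(\kappa - 3/2)/2} \text{ for all } s \in [0,t] \bigr\}, \]
so that on $[0,t^*]$ the smallness hypothesis \eqref{condition W1inf} and the hypothesis of Lemma~\ref{lemma: Estimate matrix difference} are both met. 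Consequently, $L^p$ and $W^{1,\infty}$ norms transfer between the intermediate domains $\Omh^\theta$ and $\Omh(\bfx^*)$ uniformly in $\theta \in [0,1]$. Since $\bfe_\bfx(0) = 0$, continuity gives $t^* > 0$.

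\emph{Energy estimate on $[0,t^*]$.} Testing the first line of \eqref{eq: error} with $\bfe_{\bfv^\Om}\T$ leaves $\lVert \bfe_{\bfv^\Om} \rVert_{\bfA_{22}(\bfx^*)}^2$ on the left. Lemma~\ref{lemma: matrix difference} rewrites each matrix-difference term as an integral over $\Omh^\theta$ of three gradients. For the two terms carrying $\bfv^{\Om,*}$ or $\bfv^{\Ga,*}$, I place $\nb v^{*,\theta}$ in an $L^\infty$ slot (using smoothness of the exact interpolated velocity) and the two error gradients in $L^2$, arriving at a bound of the form $c \lVert \bfe_{\bfv^\Om} \rVert_{\bfA_{22}(\bfx^*)} \lVert \bfe_{\bfx^\Om} \rVert_{\bfA_{22}(\bfx^*)}$ after Lemma~\ref{lemma: aux 2} transfers norms back to $\Omh(\bfx^*)$. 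For the remaining term $\bfe_{\bfv^\Om}\T (\bfA_{22}(\bfx) - \bfA_{22}(\bfx^*)) \bfe_{\bfv^\Om}$, I put $\nb e_{\bfx,h}^\theta$ in the $L^\infty$ slot instead, so that the bootstrap smallness lets me absorb the contribution into the left-hand side. The defect term is handled directly via \eqref{eq: dual norm}, giving $|\bfe_{\bfv^\Om}\T \bfM_{22}(\bfx^*) \bfd_{\bfv^\Om}| \le \lVert \bfe_{\bfv^\Om} \rVert_{\bfA_{22}(\bfx^*)} \lVert \bfd_{\bfv^\Om} \rVert_{\star,\bfx^*}$. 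Young's inequality then yields
\[ \lVert \bfe_{\bfv^\Om}(t) \rVert_{\bfA_{22}(\bfx^*(t))}^2 \le c \lVert \bfe_{\bfx^\Om}(t) \rVert_{\bfA_{22}(\bfx^*(t))}^2 + c \lVert \bfd_{\bfv^\Om}(t) \rVert_{\star,\bfx^*(t)}^2. \]

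\emph{Position estimate, Gronwall, and bootstrap closure.} Differentiating $\lVert \bfe_{\bfx^\Om} \rVert_{\bfA_{22}(\bfx^*)}^2$ in time and using $\dot \bfe_{\bfx^\Om} = \bfe_{\bfv^\Om}$, Lemma~\ref{lemma: Derivative estimate} bounds the $\dot{\bfA}_{22}(\bfx^*)$-contribution, while Cauchy-Schwarz together with the energy estimate yields
\[ \frac{d}{dt} \lVert \bfe_{\bfx^\Om} \rVert_{\bfA_{22}(\bfx^*)}^2 \le c \lVert \bfe_{\bfx^\Om} \rVert_{\bfA_{22}(\bfx^*)}^2 + c \lVert \bfd_{\bfv^\Om} \rVert_{\star,\bfx^*}^2. \]
Gronwall and $\bfe_{\bfx^\Om}(0) = 0$ deliver \eqref{error bound 1}; re-inserting this into the energy estimate produces \eqref{error bound 2}. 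Finally, the inverse inequality together with \eqref{eq: defect assumption1} gives $\lVert \nb e_{\bfx,h}^0(t) \rVert_{L^\infty} \le c h^{-3/2} \lVert \bfe_{\bfx^\Om}(t) \rVert_{\bfA_{22}(\bfx^*)} \le c h^{\kappa - 3/2}$, which for $h \le h_0$ sufficiently small is strictly below the bootstrap threshold $h^{(\kappa - 3/2)/2}$ since $\kappa - 3/2 > (\kappa - 3/2)/2 > 0$ by $\kappa > 3/2$. By continuity this excludes $t^* < T$.

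\emph{Main obstacle.} The delicate point is the circular dependence between the energy estimate (which needs $\lVert \nb \bfe_\bfx \rVert_{L^\infty}$ small both to absorb the term quadratic in $\bfe_{\bfv^\Om}$ and to justify the norm-transfer of Lemma~\ref{lemma: aux 2}) and the position bound itself. The loop closes only because the inverse inequality costs $h^{-3/2}$, which still leaves a net positive power of $h$ when $\kappa > 3/2$; this is precisely the standing assumption and the ultimate reason why Theorem~\ref{theorem: error bound} requires $k \ge 2$.
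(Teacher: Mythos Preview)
Your approach is correct and essentially the same as the paper's: bootstrap on $\lVert \nb e_x \rVert_{L^\infty}$, test with $\bfe_{\bfv^\Om}$, use Lemma~\ref{lemma: matrix difference} with an $L^2$--$L^2$--$L^\infty$ splitting, differentiate $\lVert \bfe_{\bfx^\Om} \rVert_{\bfA_{22}(\bfx^*)}^2$ with Lemma~\ref{lemma: Derivative estimate}, apply Gronwall, and close the bootstrap via the inverse inequality.

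One point where your write-up is imprecise and the paper is more careful: you speak of handling ``the two terms carrying $\bfv^{\Om,*}$ or $\bfv^{\Ga,*}$'' and placing ``$\nb v^{*,\theta}$'' in the $L^\infty$ slot. Treated separately, the relevant finite element functions are the zero-padded pieces $\sum_{j>N_\Ga} v_j^*\varphi_j$ and $\sum_{j\le N_\Ga} v_j^*\varphi_j$, whose gradients are \emph{not} uniformly bounded in $h$ (on boundary elements they scale like $h^{-1}$). The paper therefore first combines the two terms via \eqref{eq: aux combine} into the single expression $\bfe_\bfv\T(\bfA(\bfx)-\bfA(\bfx^*))\bfv^*$, so that the $L^\infty$ factor is $\lVert \nb v_h^{*} \rVert_{L^\infty}$ for the \emph{full} interpolated velocity, which is bounded by smoothness of $v$. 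Your notation $\nb v^{*,\theta}$ suggests you have this in mind, but the combination step is essential and should be made explicit.
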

\begin{proof}
	The proof uses \brev energy estimates that are similar to techniques used in \cite{KLLP17} and \cite{KLL18MCF}. We extend their results for coupled surface problems to the present evolving bulk problem. Since the structure of the proof is similar to the cited works, we might skip some non-trivial steps. However there are some crucial differences that need to be pointed out: The evolving bulk $\Omt$ has a boundary $\Gat$ that has to be taken into account, whereas in \cite{KLLP17,KLL18MCF} the considered evolving surfaces have no boundaries or interiors. Moreover, we exploit the fact there is no position or velocity error in the boundary because the boundary velocity is given. This implies that the lift of the finite element function corresponding to the error is a $H_0^1$-function and turns out to be crucial to estimate the error equations, see \eqref{eq: aux combine} and \eqref{eq: milestone} below. In addition, the space dimension $n \in \{2,3\}$ requires the assumption $\kappa > n/2$, which is due to an inverse estimate at the end of this proof, see Remark~\ref{remark: dimension 2}. 
	
	In view of the auxiliary results from Section \ref{section: auxiliary results} and in particular condition \eqref{condition W1inf}, we need to control the $W^{1,\infty}$-norm of the position error $e_x(\cdot,t)$. Let \erev $0 < t^* \le T$ be the maximal time such that
	\begin{align}\label{ineq: t-stern}
	\lVert \nb e_x(\cdot,t) \rVert_{L^\infty(\Omh(\bfx^*(t)))} \le h^{(\kappa-3/2)/2} \,.
	\end{align}
	Note that $e_x(\cdot,0)=0$ implies $t^* > 0$. We prove the stated error bounds for $t \in [0,t^*]$ and then show that $t^* = T$.
	
	We test the first equation of \eqref{eq: error} with $\bfe_{\bfv^\Om}$ and obtain
	\begin{align}\label{eq: tested error equation}
		\begin{aligned}
			\lVert \bfe_{\bfv^\Om} \rVert_{\bfA_{22}(\bfx^*)}^2 = &-\bfe_{\bfv^\Om}\T ( \bfA_{22}(\bfx) - \bfA_{22}(\bfx^*)) \bfv^{\Om,*} - \bfe_{\bfv^\Om}\T ( \bfA_{22}(\bfx) - \bfA_{22}(\bfx^*)) \bfe_{\bfv^\Om} \\
			&- \bfe_{\bfv^\Om}\T (\bfA_{21}(\bfx) - \bfA_{21}(\bfx^*)) \bfv^{\Ga,*} -\bfe_{\bfv^\Om}\T \bfM_{22}(\bfx^*) \bfd_{\bfv^\Om} \,.
		\end{aligned}
	\end{align}

	It is crucial to combine the first and third term of \eqref{eq: tested error equation}. Note that
	\begin{align}\label{eq: aux combine}
	\begin{aligned}
		(0,\bfe_{\bfv^\Om}\T) \bfA(\bfx) \begin{pmatrix} \bfv^\Ga \\ \bfv^\Om \end{pmatrix} &= (0,\bfe_{\bfv^\Om}\T) \begin{pmatrix} \bfA_{11}(\bfx) & \bfA_{12}(\bfx) \\ \bfA_{21}(\bfx) & \bfA_{22}(\bfx) \end{pmatrix} \begin{pmatrix} \bfv^\Ga \\ \bfv^\Om \end{pmatrix} \\
		&= \bfe_{\bfv^\Om}\T \bfA_{21}(\bfx) \bfv^\Ga+\bfe_{\bfv^\Om}\T \bfA_{22}(\bfx) \bfv^\Om \,.
	\end{aligned}
	\end{align}
	We set $\bfe_{\bfv^\Ga} = 0$ and $\bfe_\bfv\T = (\bfe_{\bfv^\Ga}\T,\bfe_{\bfv^\Om}\T)$. Applying \eqref{eq: aux combine} to \eqref{eq: tested error equation}, we obtain
	\begin{align}\label{eq: milestone}
	\lVert \bfe_\bfv \rVert_{\bfA(\bfx^*)}^2 = &- \bfe_\bfv\T(\bfA(\bfx)-\bfA(\bfx^*)) \bfv^* - \bfe_{\bfv^\Om}\T (\bfA_{22}(\bfx) - \bfA_{22}(\bfx^*)) \bfe_{\bfv^\Om} - \bfe_{\bfv^\Om}\T \bfM_{22}(\bfx^*) \bfd_{\bfv^\Om} \\
	=&-\bfe_\bfv\T(\bfA(\bfx)-\bfA(\bfx^*)) \bfv^* -\bfe_{\bfv}\T(\bfA(\bfx) - \bfA(\bfx^*)) \bfe_{\bfv} -\bfe_\bfv\T \bfM(\bfx^*) \bfd_\bfv \,.
	\end{align}
	We estimate these three terms separately.
	
	\brev (i) We use that
	\[ D_{\Omh^\theta} e_x^\theta = \mathrm{trace}(\nb e_x^\theta) I_3 -\left( \nb e_x^\theta + (\nb e_x^\theta)\T \right) \]
	and thus $\lVert D_{\Omh^\theta} e_x^\theta \rVert \le c \lVert \nb e_x^\theta \rVert$. With Lemma~\ref{lemma: matrix difference}, an $L^2$-$L^2$-$L^\infty$-estimate and Lemma~\ref{lemma: Estimate matrix difference}, we arrive at
	\begin{align}
		\bfe_\bfv\T \left( \bfA(\bfx) - \bfA(\bfx^*) \right) \bfv^* &= \int_0^1 \int_{\Omh^\theta} \nb e_v^\theta \cdot ( D_{\Omh^\theta} e_x^\theta) \nb v_h^{*,\theta} \dx \dTheta \\
		&\le \int_0^1 \lVert \nb e_v^\theta \rVert_{L^2(\Omh^\theta)} \lVert D_{\Omh^\theta} e_x^\theta \rVert_{L^2(\Omh^\theta)} \lVert \nb v_h^{*,\theta} \rVert_{L^\infty(\Omh^\theta)} \dTheta \\
		&\le c \lVert \nb e_v^0 \rVert_{L^2(\Omh^0)} \lVert \nb e_x^0 \rVert_{L^2(\Omh^0)} \lVert \nb v_h^{*,0} \lVert_{L^\infty(\Omh^0)} \\
		&=c \lVert \bfe_\bfv \rVert_{\bfA(\bfx^*)} \lVert \bfe_\bfx \rVert_{\bfA(\bfx^*)} \lVert \nb v_h^* \rVert_{L^\infty(\Omh(\bfx^*))} \,.
	\end{align}
	\erev
	The last factor is bounded by a constant independent of $h$, since $v_h^*$ is the finite element interpolation of the exact velocity (see \cite[Theorem 4.1]{Ber89}). Using Young's inequality together with the fact that $\lVert \bfe_\bfv \rVert_{\bfA(\bfx^*)} = \lVert \bfe_{\bfv^\Om} \rVert_{\bfA_{22}(\bfx^*)}$, we obtain
	\[ \bfe_{\bfv}\T (\bfA(\bfx) - \bfA(\bfx^*)) \bfv^* \le \frac{1}{4} \lVert \bfe_{\bfv^\Om} \rVert_{\bfA_{22}(\bfx^*)}^2 + C \lVert \bfe_{\bfx^\Om} \rVert_{\bfA_{22}(\bfx^*)}^2 \,.\]
	
	(ii) Similarly, using the smallness assumption \eqref{ineq: t-stern}, we obtain
	\begin{align*}
		\bfe_{\bfv}\T(\bfA(\bfx) - \bfA(\bfx^*)) \bfe_{\bfv} &\le c \lVert \nb e_v^0 \rVert_{L^2(\Omh(\bfx^*))}^2 \lVert \nb e_x^0 \rVert_{L^\infty(\Omh(\bfx^*))} \\
		&\le c h^{(\kappa-3/2)/2} \lVert \bfe_\bfv \rVert_{\bfA(\bfx^*)}^2 = c h^{(\kappa-3/2)/2} \lVert \bfe_{\bfv^\Om} \rVert_{\bfA_{22}(\bfx^*)}^2 \,.
	\end{align*}
	
	(iii) Using the Cauchy--Schwarz inequality together with Young's inequality, we estimate
	\begin{align*}
		\bfe_{\bfv^\Om}\T \bfM_{22}(\bfx^*) \bfd_{\bfv^\Om} &= \bfe_{\bfv^\Om}\T \bfA_{22}(\bfx^*)^{\frac{1}{2}} \bfA_{22}(\bfx^*)^{-\frac{1}{2}} \bfM_{22}(\bfx^*) \bfd_{\bfv^\Om} \\
		&\le \frac{1}{4} \lVert \bfA_{22}(\bfx^*)^{\frac{1}{2}} \bfe_{\bfv^\Om} \rVert^2 + c\lVert \bfA_{22}(\bfx^*)^{-\frac{1}{2}} \bfM_{22}(\bfx^*) \bfd_{\bfv^\Om} \rVert^2 \\
		&= \frac{1}{4} \lVert \bfe_{\bfv^\Om} \rVert_{\bfA_{22}(\bfx^*)}^2 + c \lVert \bfd_{\bfv^\Om} \rVert_{\star,\bfx^*}^2 \,.
	\end{align*}
	The combination of the three estimates with absorptions (for $h \le h_0$ sufficiently small) yields
	\begin{align}\label{ineq: milestone 1}
		\lVert \dot{\bfe}_{\bfx^\Om} \rVert_{\bfA_{22}(\bfx^*)}^2 = \lVert \bfe_{\bfv^\Om} \rVert_{\bfA_{22}(\bfx^*)}^2 \le c \lVert \bfe_{\bfx^\Om} \rVert_{\bfA_{22}(\bfx^*)}^2 + c \lVert \bfd_{\bfv^\Om} \rVert_{\star,\bfx^*}^2 \,.
	\end{align}
	We connect $\dfdt{}{t} \lVert \bfe_{\bfx^\Om} \rVert_{\bfA_{22}(\bfx^*)}^2$ and $\lVert \dot{\bfe}_{\bfx^\Om} \rVert_{\bfA_{22}(\bfx^*)}^2$. \brev We have
	\[ \frac{1}{2} \dfdt{}{t} \lVert \bfe_{\bfx^\Om} \rVert_{\bfA_{22}(\bfx^*)}^2 = \bfe_{\bfx^\Om}\T \bfA_{22}(\bfx^*) \dot{\bfe}_{\bfx^\Om} + \frac{1}{2} \bfe_{\bfx^\Om}\T \left( \dfdt{}{t} \bfA_{22}(\bfx^*(t)) \right) \bfe_{\bfx^\Om} \,.\]
	With the Cauchy--Schwarz and Young inequalities, we obtain
	\[ \bfe_{\bfx^\Om}\T \bfA_{22}(\bfx^*) \dot{\bfe}_{\bfx^\Om} \le \lVert \bfe_{\bfx^\Om} \rVert_{\bfA_{22}(\bfx^*)} \lVert \dot{\bfe}_{\bfx^\Om} \rVert_{\bfA_{22}(\bfx^*)} \le \lVert \dot{\bfe}_{\bfx^\Om} \rVert_{\bfA_{22}(\bfx^*)}^2 + \frac{1}{4} \lVert \bfe_{\bfx^\Om} \rVert_{\bfA_{22}(\bfx^*)}^2\,.\]
	For the second term, Lemma \ref{lemma: Derivative estimate} yields
	\[ 
	\frac{1}{2} \bfe_{\bfx^\Om}\T \left( \dfdt{}{t} \bfA_{22}(\bfx^*(t)) \right) \bfe_{\bfx^\Om} \le C \lVert \bfe_{\bfx^\Om}(t) \rVert_{\bfA_{22}(\bfx^*(t))}^2 \,.\]
	We thus obtain, using \eqref{ineq: milestone 1}
	\begin{align}\label{eq: aux 13}
		\frac{1}{2} \dfdt{}{t} \lVert \bfe_{\bfx^\Om} \rVert_{\bfA_{22}(\bfx^*)}^2 \le c \lVert \bfe_{\bfx^\Om} \rVert_{\bfA_{22}(\bfx^*)}^2 + c \lVert \bfd_{\bfv^\Om} \rVert_{\star,\bfx^*}^2 \,.
	\end{align}
	Integrating from $0$ to $t$ and using $\bfe_{\bfx^\Om}(0)=0$, we obtain
	\[ \lVert \bfe_{\bfx^\Om}(t) \rVert_{\bfA_{22}(\bfx^*(t))}^2 \le c \int_0^t \lVert \bfd_{\bfv^\Om}(s) \rVert_{\star,\bfx^*(s)}^2 \mathrm{d}s + \int_0^t c \lVert \bfe_{\bfx^\Om}(s) \rVert_{\bfA_{22}(\bfx^*(s))}^2 \mathrm{d} s \,.\]
	The Gronwall inequality thus yields \eqref{error bound 1}, which then inserted into \eqref{ineq: milestone 1} yields \eqref{error bound 2}.\erev
	
	Now it remains to show that for $h \le h_0$ sufficiently small we in fact have $t^* = T$. For $0 \le t \le t^*$, we have with an inverse inequality (see \cite{BrennerScott}) and for $h \le h_0$ sufficiently small:
	\begin{align*}
	\lVert \nb e_x(\cdot,t) \rVert_{L^\infty(\Omh(\bfx^*(t)))} &\le c h^{-3/2} \lVert \nb e_x(\cdot,t) \rVert_{L^2(\Omh(\bfx^*(t)))} \\
	&= c h^{-3/2} \lVert \bfe_{\bfx^\Om}(t) \rVert_{\bfA_{22}(\bfx^*(t))}^2 \le c h^{\kappa -3/2} \\
	&\le \frac{1}{2} h^{\frac{\kappa-3/2}{2}} \,.
	\end{align*}
	This shows that the bound \eqref{ineq: t-stern} can be extended beyond $t^*$, which contradicts the maximality of $t^*$ unless $t^* = T$.
\end{proof}

\begin{remark}\label{remark: dimension 2}
	The previous lemma remains valid in the two-dimensional case, where the assumption \eqref{eq: defect assumption1} is only required for $\kappa>1$. 
	Either way, it requires the finite element method to be of order two, at least.
\end{remark}

\section{Stability of the semi-discrete diffusion equation}\label{section: stability diffusion}
In this section, we extend the stability result to the nodal vector $\bfu(t)$ of the numerical solution to the semi-discrete diffusion equation.

\subsection{Error equations}
The numerical solution $u_h(x,t) = \sum_{j=1}^N u_j(t) \varphi_j[\bfx(t)](x)$ with corresponding nodal vector $\bfu=\bfu(t) = (u_j(t))_{j=1}^N$ satisfies 
\begin{align}\label{eq: matrix vector u}
	\dfdt{}{t} \left( \bfM(\bfx) \bfu \right) + \bfA(\bfx) \bfu = \bff(\bfx) \,.
\end{align}
The finite element interpolation $u_h^*(\cdot,t)$ of the exact solution $u(\cdot,t)$ with corresponding nodal vector $\bfu^*(t)$, when inserted into the matrix--vector formulation, yields defects $\bfd_\bfu$, corresponding to a finite element function $d_h^u$, such that
\begin{align}\label{eq: defect du}
	\dfdt{}{t} \left( \bfM(\bfx^*) \bfu^* \right) + \bfA(\bfx^*) \bfu^* = \bff(\bfx^*) + \bfM(\bfx^*) \bfd_\bfu \,.
\end{align}
Rewriting \eqref{eq: matrix vector u} in a similar way as \eqref{eq: ODE system rewrite} and subtracting from \eqref{eq: defect du} yields the error equation
\begin{align}\label{eq: error equation u}
	\begin{aligned}
		\dfdt{}{t} \big( \bfM(\bfx^*) \bfe_\bfu \big) + \bfA(\bfx^*) \bfe_\bfu = &-\dfdt{}{t} \big( (\bfM(\bfx)-\bfM(\bfx^*)) \bfu^* \big) - \dfdt{}{t} \big( (\bfM(\bfx)-\bfM(\bfx^*)) \bfe_\bfu \big) \\
		&-\big( \bfA(\bfx)-\bfA(\bfx^*) \big) \bfu^* - \big( \bfA(\bfx) - \bfA(\bfx^*) \big) \bfe_\bfu +(\bff(\bfx)-\bff(\bfx^*)) - \bfM(\bfx^*) \bfd_\bfu \,.
	\end{aligned}
\end{align}

\subsection{Dual norm}
In order to bound the defect in $u$, we need to introduce a different dual norm than in the previous section, which is due to the fact that the defect $d_h^u$ lives on the whole domain $\Omt$ and does not vanish on the boundary. We use the notation $\bfK(\bfx^*)=\bfM(\bfx^*)+\bfA(\bfx^*)$ and consider the dual norm (cf. \eqref{eq: dual norm})
\begin{align}\label{eq: dual norm u}
	\begin{aligned}
		\lVert d_h^u \rVert_{H_h^{-1} (\Omh(\bfx^*))} &= \sup_{0 \ne \psi_h \in S_h(\bfx^*)} \frac{ \int_{\Omh(\bfx^*)} d_h^u \psi_h \dx }{ \lVert \psi_h \rVert_{H^1(\Omh(\bfx^*))} } \\
		& = \left( \bfd_\bfu\T \bfM(\bfx^*) \bfK(\bfx^*)^{-1} \bfM(\bfx^*) \bfd_\bfu \right)^{1/2} =: \lVert \bfd_\bfu\rVert_{\star,\bfx^*} \,.
	\end{aligned}
\end{align}
For simplicity, we do not use another notation for the dual norm of $\bfd_\bfu$, as it will always be clear from context which dual norm is meant. In the following stability proof, we need the following technical lemma.
\begin{lemma}\label{lemma: mat grad}
	For a function $w=w(x,t): \Omt \to \R^3$, we have 
	\begin{align}
	\mat \left(\nb \cdot w\right) = \nb \cdot \mat w - \nb v \cdot \nb w \,,
	\end{align}
	where $v = v(x,t)$ is the velocity and $\nb v \cdot \nb w$ denotes the Frobenius norm inner product, i.e.~the Euclidean product of the vectorizations of the matrices.
\end{lemma}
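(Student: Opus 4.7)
The plan is to verify the identity by a direct componentwise calculation, using only the definition of the material derivative, the product rule, and the fact that partial derivatives with respect to time and space commute (Schwarz's theorem). Throughout I work in Cartesian coordinates in $\mathbb{R}^3$, writing $v=(v_1,v_2,v_3)$, $w=(w_1,w_2,w_3)$, and $\partial_i := \partial/\partial x_i$.

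First I would expand the left-hand side. Since $\nabla \cdot w$ is a scalar, the definition \eqref{eq: material derivative} gives
\[
\mat(\nabla \cdot w) = \partial_t (\nabla \cdot w) + v \cdot \nabla(\nabla \cdot w) = \sum_i \partial_t \partial_i w_i + \sum_{i,k} v_k\, \partial_k \partial_i w_i.
\]
Next I would expand the right-hand side. Since $\mat$ acts componentwise on $w$, one has $(\mat w)_i = \partial_t w_i + \sum_k v_k \partial_k w_i$, and therefore
\[
\nabla \cdot \mat w = \sum_i \partial_i(\mat w)_i = \sum_i \partial_i \partial_t w_i + \sum_{i,k} \partial_i(v_k \partial_k w_i).
\]
Commuting the mixed partials in the first sum yields $\partial_t(\nabla\cdot w)$, which matches the corresponding term on the left. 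Applying the Leibniz rule in the second sum gives the two pieces $\sum_{i,k} v_k\,\partial_i\partial_k w_i$ and $\sum_{i,k} (\partial_i v_k)(\partial_k w_i)$. Using Schwarz to swap $\partial_i\partial_k w_i = \partial_k \partial_i w_i$, the first of these exactly cancels the convection term $\sum_{i,k} v_k \partial_k \partial_i w_i$ that appeared on the left.

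Subtracting the expansions then leaves
\[
\nabla \cdot \mat w - \mat(\nabla \cdot w) = \sum_{i,k} (\partial_i v_k)(\partial_k w_i),
\]
and it remains to identify the right-hand side with $\nabla v \cdot \nabla w$ in the Frobenius sense used in the paper. This is a matter of reading indices: with the convention in which $\nabla v$ and $\nabla w$ are the $3\times 3$ Jacobians, the sum $\sum_{i,k} (\partial_i v_k)(\partial_k w_i)$ is precisely the entry-wise contraction of the two matrices (that is, the Euclidean product of their vectorizations, as declared in the statement of the lemma). Rearranging gives $\mat(\nabla \cdot w) = \nabla \cdot \mat w - \nabla v \cdot \nabla w$, as claimed.

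The computation is routine; the only point that needs care is the index bookkeeping in the final identification step, in particular keeping track of which index corresponds to the differentiation variable and which to the component, so that the claimed Frobenius inner product on the right matches the sum produced by the product rule. Once the conventions are fixed this step is immediate, so I do not expect any real obstacle in the proof.
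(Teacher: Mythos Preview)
Your direct componentwise calculation is correct and entirely self-contained: expanding both sides via the definition $\mat = \partial_t + v\cdot\nabla$, applying the product rule, and commuting mixed partials leaves exactly the cross term $\sum_{i,k}(\partial_i v_k)(\partial_k w_i)$, which is the stated Frobenius-type contraction. You are also right to flag the index convention as the one place that needs care; once the paper's convention for $\nabla v$ as a $3\times 3$ matrix is fixed, the identification is immediate.

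The paper takes a different, less elementary route: rather than computing directly, it invokes the analogous identity for the \emph{surface} divergence proved in \cite{KLLP17} (based on \cite{DKM13}) and transfers it to the bulk by embedding $\Omega(t)$ as the flat hypersurface $\Omega(t)\times\{0\}\subset\R^4$. That argument is conceptually neat in that it reuses existing machinery, but it presupposes familiarity with the surface calculus in the cited works. Your argument, by contrast, is shorter, needs no external references, and makes the identity transparent at the level of partial derivatives; for a bulk domain in Euclidean space this is arguably the more natural proof.
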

\begin{proof}
	Based on \cite{DKM13}, a similar identity for the surface divergence is shown in \cite{KLLP17}. The proof is adapted by embedding everything into a surface $\Gat = \Omt \times \{0\} \in \R^4$.
%
\end{proof}

\subsection{Stability estimate}
We are now able to state and prove the stability result for the error $\bfe_\bfu$. Note that the previous stability estimates for $\bfe_\bfx$ and $\bfe_\bfv$ remain valid since the solution to the domain evolution does not depend on the numerical solution $u_h$, but the solution $u_h$ to the diffusion equation depends on the solution $\bfx$ of the position vectors, which is reflected in the following proof.
\begin{lemma}\label{lemma: stability u}
	Assume that, for some $\kappa > \frac{3}{2}$, the defects are bounded as follows:
	\begin{align}\label{eq: defect assumption2}
		\lVert \bfd_{\bfu} (t) \rVert_{\star,\bfx^*(t)} \le c h^\kappa \,,\quad  \lVert \bfd_{\bfv^\Om}(t) \rVert_{\star,\bfx^*(t)} \le c h^\kappa \,,\quad t \in [0,T] \,.
	\end{align}
	Then there exists an $h_0 > 0$ such that the following estimate holds for $h \le h_0$ and $t \in [0,T]$, where the constant $C$ is independent of $h$:
	\begin{align}\label{ineq: stability estimate 2}
	\lVert \bfe_\bfu(t) \rVert_{\bfM(\bfx^*)}^2 + \int_0^t \lVert \bfe_\bfu(s) \rVert_{\bfA(\bfx^*(s))}^2 \mathrm{d}s \le C \int_0^t \lVert \bfd_\bfu(s) \rVert_{\star,\bfx^*(s)}^2 + \lVert \bfd_\bfv(s) \rVert_{\star,\bfx^*(s)}^2 \mathrm{d} s\,.
	\end{align}
\end{lemma}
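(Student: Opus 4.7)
The plan is to emulate the energy-estimate strategy from the proof of Lemma~\ref{lem: stability}, now applied to the error equation \eqref{eq: error equation u}. Since the domain motion is independent of the parabolic equation and the assumption \eqref{eq: defect assumption2} contains the bound on $\bfd_{\bfv^\Om}$ needed in Lemma~\ref{lem: stability}, the position and velocity error bounds \eqref{error bound 1}--\eqref{error bound 2} are at our disposal on $[0,t^*]$, where $t^*$ is defined by the same bootstrap condition $\lVert \nb e_x(\cdot,t) \rVert_{L^\infty(\Omh(\bfx^*(t)))} \le h^{(\kappa-3/2)/2}$ as before. Consequently, Lemmas~\ref{lemma: matrix difference}--\ref{lemma: Estimate matrix difference} control every matrix difference $\bfM(\bfx)-\bfM(\bfx^*)$ and $\bfA(\bfx)-\bfA(\bfx^*)$ by quantities that are ultimately of order $h^\kappa$.

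Testing \eqref{eq: error equation u} with $\bfe_\bfu$, the left-hand side is
\[
\bfe_\bfu\T \dfdt{}{t}\!\left( \bfM(\bfx^*)\bfe_\bfu \right) + \lVert \bfe_\bfu \rVert_{\bfA(\bfx^*)}^2 = \frac{1}{2}\dfdt{}{t}\lVert \bfe_\bfu \rVert_{\bfM(\bfx^*)}^2 + \frac{1}{2}\bfe_\bfu\T \dot{\bfM}(\bfx^*)\bfe_\bfu + \lVert \bfe_\bfu \rVert_{\bfA(\bfx^*)}^2 \,,
\]
and Lemma~\ref{lemma: Derivative estimate} absorbs the middle term into a Gronwall loop. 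Among the non-differentiated right-hand side terms, $\bfe_\bfu\T(\bfA(\bfx)-\bfA(\bfx^*))\bfu^*$, $\bfe_\bfu\T(\bfA(\bfx)-\bfA(\bfx^*))\bfe_\bfu$ and $\bfe_\bfu\T(\bff(\bfx)-\bff(\bfx^*))$ are treated with Lemma~\ref{lemma: matrix difference} and Young's inequality: after absorbing a fraction of $\lVert \bfe_\bfu \rVert_{\bfA(\bfx^*)}^2$ on the left, one is left with remainders of the type $\lVert \bfe_{\bfx^\Om} \rVert_{\bfA_{22}(\bfx^*)}^2$, which by \eqref{error bound 1} are controlled by $\int_0^t \lVert \bfd_{\bfv^\Om}(s) \rVert_{\star,\bfx^*(s)}^2 \mathrm{d}s$. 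The defect term is estimated via the dual norm \eqref{eq: dual norm u},
\[
\left| \bfe_\bfu\T \bfM(\bfx^*)\bfd_\bfu \right| \le \tfrac{1}{4}\left(\lVert \bfe_\bfu \rVert_{\bfM(\bfx^*)}^2 + \lVert \bfe_\bfu \rVert_{\bfA(\bfx^*)}^2\right) + c\lVert \bfd_\bfu \rVert_{\star,\bfx^*}^2 \,.
\]

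The main obstacle lies in the time-differentiated matrix-difference terms $\bfe_\bfu\T \dfdt{}{t}((\bfM(\bfx)-\bfM(\bfx^*))\bfw)$ with $\bfw = \bfu^*$ and $\bfw = \bfe_\bfu$. Expanding via the product rule, the piece $\bfe_\bfu\T (\bfM(\bfx)-\bfM(\bfx^*))\dot{\bfw}$ is handled with Lemma~\ref{lemma: matrix difference} together with the smoothness of $\bfu^*$ (for $\bfw=\bfu^*$) or via Young's inequality into the Gronwall term (for $\bfw=\bfe_\bfu$). The other piece, $\bfe_\bfu\T\bigl(\dfdt{}{t}(\bfM(\bfx)-\bfM(\bfx^*))\bigr)\bfw$, is rewritten using the transport-formula identity $\bfp\T \dot{\bfM}(\bfx(t))\bfq = \int_{\Omh(\bfx)} p_h q_h (\nb \cdot v_h) \dx$ (valid for time-constant nodal vectors since $\mat_h \varphi_j = 0$). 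The difference between this integral on $\Omh(\bfx)$ with velocity $v_h$ and the analogous integral on $\Omh(\bfx^*)$ with velocity $v_h^*$ is controlled by a second application of Lemma~\ref{lemma: matrix difference} together with $\lVert \bfe_\bfv \rVert_{\bfA(\bfx^*)}^2$, itself bounded via \eqref{error bound 2}. Lemma~\ref{lemma: mat grad}, expressing $\mat(\nb \cdot w)$ in terms of $\nb \cdot \mat w$ and $\nb v \cdot \nb w$, is the tool that makes the rearrangement of these divergence-type terms clean.

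Combining all bounds and absorbing $\tfrac{1}{2}\lVert \bfe_\bfu \rVert_{\bfA(\bfx^*)}^2$ on the left yields
\[
\frac{1}{2}\dfdt{}{t}\lVert \bfe_\bfu \rVert_{\bfM(\bfx^*)}^2 + \tfrac{1}{2}\lVert \bfe_\bfu \rVert_{\bfA(\bfx^*)}^2 \le c\lVert \bfe_\bfu \rVert_{\bfM(\bfx^*)}^2 + c\lVert \bfd_\bfu \rVert_{\star,\bfx^*}^2 + c\int_0^t \lVert \bfd_{\bfv^\Om}(s)\rVert_{\star,\bfx^*(s)}^2 \mathrm{d}s \,.
\]
Integration from $0$ to $t$ (using $\bfe_\bfu(0)=0$) and Gronwall's inequality then give \eqref{ineq: stability estimate 2}. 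Closure of the bootstrap $t^* = T$ is inherited from Lemma~\ref{lem: stability}, since $\bfe_\bfx$ is determined solely by the domain-evolution part of the system and the inverse-inequality argument there applies verbatim. The principal difficulty throughout is the bookkeeping of the time-derivative matrix-difference terms, which demands a careful combination of the transport formula with Lemma~\ref{lemma: matrix difference} and, in a supporting role, Lemma~\ref{lemma: mat grad}.
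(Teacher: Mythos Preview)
Your overall strategy is right, but the treatment of the second time-differentiated term, $\bfe_\bfu\T \dfdt{}{t}\!\big((\bfM(\bfx)-\bfM(\bfx^*))\bfe_\bfu\big)$, has a genuine gap. After the product rule, the piece $\bfe_\bfu\T(\bfM(\bfx)-\bfM(\bfx^*))\dot{\bfe}_\bfu$ cannot be ``handled via Young's inequality into the Gronwall term'': Lemma~\ref{lemma: matrix difference} gives at best $c\,h^{(\kappa-3/2)/2}\lVert \bfe_\bfu\rVert_{\bfM(\bfx^*)}\lVert \dot{\bfe}_\bfu\rVert_{\bfM(\bfx^*)}$, and there is no available control on $\lVert \dot{\bfe}_\bfu\rVert_{\bfM(\bfx^*)}$. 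The paper avoids $\dot{\bfe}_\bfu$ altogether by exploiting the symmetry of $S:=\bfM(\bfx)-\bfM(\bfx^*)$ to write
\[
\bfe_\bfu\T \dfdt{}{t}(S\,\bfe_\bfu) \;=\; \tfrac{1}{2}\,\bfe_\bfu\T \dot{S}\,\bfe_\bfu \;+\; \tfrac{1}{2}\,\dfdt{}{t}\big(\bfe_\bfu\T S\,\bfe_\bfu\big)\,,
\]
and then \emph{keeps} the total-derivative term on the right through the integration step; after integrating, $\bfe_\bfu(t)\T S(t)\,\bfe_\bfu(t)$ is bounded by $c\,h^{(\kappa-3/2)/2}\lVert \bfe_\bfu\rVert_{\bfM(\bfx^*)}^2$ via Lemma~\ref{lemma: matrix difference} and absorbed on the left. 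Your final differential inequality therefore lacks the $-\tfrac{1}{2}\dfdt{}{t}\big(\bfe_\bfu\T(\bfM(\bfx)-\bfM(\bfx^*))\bfe_\bfu\big)$ term that the paper carries.

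A second, related omission is the bootstrap assumption on $e_u$. The paper augments the $t^*$-definition with $\lVert e_u(\cdot,t)\rVert_{L^\infty(\Omh(\bfx^*(t)))}\le 1$, which is needed precisely to estimate $\bfe_\bfu\T \dot{S}\,\bfe_\bfu$: the resulting integrals are quadratic in $e_u^\theta$ and linear in $\nb e_v^\theta$ (or $\nb e_x^\theta$), so the $L^2$--$L^2$--$L^\infty$ splitting forces one factor $\lVert e_u\rVert_{L^\infty}$. You cannot replace it by $\lVert \nb e_v\rVert_{L^\infty}$, which is not part of the bootstrap. Closure of this second bootstrap condition is then obtained from the derived $L^2$ bound $\lVert \bfe_\bfu\rVert_{\bfM(\bfx^*)}\le Ch^\kappa$ via an inverse inequality, exactly parallel to the $e_x$ argument; the closure is \emph{not} simply inherited from Lemma~\ref{lem: stability}.
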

\begin{proof}
	The proof is similar to the proof of Lemma~\ref{lem: stability}. Let $0 < t^* \le T$ be the maximal time such that
	\begin{align}\label{ineq: t-stern coupled}
	\lVert \nb e_x(\cdot,t) \rVert_{L^\infty(\Omh(\bfx^*(t)))} &\le h^{(\kappa-3/2)/2} \,,\\
	\lVert e_u(\cdot,t) \rVert_{L^\infty(\Omh(\bfx^*(t)))} &\le 1 \,.
	\end{align}
	for all $t \in [0,t^*]$. Note that $e_x(\cdot,0)=0=e_u(\cdot,0)$ implies $t^*>0$. Again, we will prove the error bound for $t \in [0,t^*]$ and then show that $t^*$ coincides with $T$.
	
	Testing \eqref{eq: error equation u} with $\bfe_\bfu\T$, we obtain (omitting the argument $t$)
	\begin{align}\label{eq: aux 3}
	\begin{aligned}
		\bfe_\bfu\T \dfdt{}{t} \left( \bfM(\bfx^*) \bfe_\bfu \right) + \bfe_\bfu\T \bfA(\bfx^*) \bfe_\bfu = \, &- \bfe_\bfu\T \dfdt{}{t} \left( (\bfM(\bfx)-\bfM(\bfx^*)) \bfu^* \right)  -\bfe_\bfu\T \dfdt{}{t} \left( (\bfM(\bfx)-\bfM(\bfx^*)) \bfe_\bfu \right) \\
		&-\bfe_\bfu\T \left( \bfA(\bfx)-\bfA(\bfx^*) \right) \bfu^* -\bfe_\bfu\T \left( \bfA(\bfx)-\bfA(\bfx^*) \right) \bfe_\bfu \\
		&-\bfe_\bfu\T(\bff(\bfx)-\bff(\bfx^*)) -\bfe_\bfu\T \bfM(\bfx^*) \bfd_\bfu \,.
	\end{aligned}
	\end{align}
	\brev We estimate the six terms on the right-hand side separately.
	
	(i) We apply the product rule to obtain
	\begin{align}\label{eq: aux 5}
	\bfe_\bfu\T \dfdt{}{t} \left( ( \bfM(\bfx)-\bfM(\bfx^*))\bfu^* \right) &= \bfe_\bfu\T \left( \bfM(\bfx)-\bfM(\bfx^*)\right) \dot{\bfu}^* + \bfe_\bfu\T \left( \dfdt{}{t} \left( \bfM(\bfx) - \bfM(\bfx^*) \right) \right) \bfu^* \,.
	\end{align}
	For the first term of \eqref{eq: aux 5}, we use Lemma~\ref{lemma: matrix difference}, an $L^2$-$L^2$-$L^\infty$-estimate and Lemma~\ref{lemma: aux 2} to obtain
	\begin{align}
	\begin{aligned}
	\left| \bfe_\bfu\T \left( \bfM(\bfx) - \bfM(\bfx^*) \right) \dot{\bfu}^* \right| &= \left| \int_0^1 \int_{\Omh^\theta} e_u^\theta (\nb \cdot e_x^\theta) \mat_h u_h^{*,\theta} \dx \dTheta \right| \\
	&\le \int_0^1 \left\lVert e_u^\theta \right\rVert_{L^2(\Omh^\theta)} \left\lVert \nb \cdot e_x^\theta \right\rVert_{L^2(\Omh^\theta)} \left\lVert \mat_h u_h^{*,\theta} \right\rVert_{L^\infty(\Omh^\theta)} \dTheta \\
	&\le c \left\lVert \bfe_\bfu\right\rVert_{\bfM(\bfx^*)} \left\lVert \bfe_\bfx \right\rVert_{\bfA(\bfx^*)} \left\lVert \mat_h u_h^{*,0} \right\rVert_{L^\infty(\Omh(\bfx^*))} \,.
	\end{aligned}
	\end{align}
	With an elementary computation, the last term can be bounded by $\lVert \mat_h u_h^{*,0} \rVert_{L^\infty(\Omh(\bfx^*))} \le c \lVert \dot{\bfu}^* (t) \rVert_\infty$, and the nodal values of $\dot{\bfu}^*(t)$ are exactly the nodal values of $\mat u(\cdot,t)$. The smoothness assumption on $u$ and $\mat u$ thus implies $\lVert \dot{\bfu}^*\rVert_\infty \le c$, and we arrive at
	\begin{align}
	\left| \bfe_\bfu\T \left( \bfM(\bfx) - \bfM(\bfx^*) \right) \dot{\bfu}^* \right| &\le c \lVert \bfe_\bfu \rVert_{\bfM(\bfx^*)} \lVert \bfe_\bfx \rVert_{\bfA(\bfx^*)} \,.
	\end{align}
	Using the basis functions, Lemma~\ref{lemma: matrix difference} and the Leibniz formula, a tedious but elementary computation yields
	\begin{align}
	\bfe_\bfu\T \dfdt{}{t} \left( \bfM(\bfx) - \bfM(\bfx^*) \right) \bfu^* &= \int_0^1 \int_{\Omh^\theta} e_u^\theta \mat_h \nb \cdot e_x^\theta u_h^{*,\theta} \dx \dTheta \\
	&+ \int_0^1 \int_{\Omh^\theta} e_u^\theta \left(\nb \cdot e_x^\theta\right) u_h^{*,\theta} \nb \cdot v_h^\theta \dx \dTheta \,,
	\end{align}
	where $v_h^\theta$ is the velocity of $\Omh^\theta$ as a function of $t$, i.~e. the finite element function in $S_h(\bfx^*(t)+\theta \bfe_\bfx(t))$ with nodal vector $\dot{\bfx}^* + \theta \dot{\bfe}_\bfx = \bfv^* + \theta \bfe_\bfv$, implying $v_h^\theta = v_h^{*,\theta} + \theta e_v^\theta$.
	We will estimate both integrals separately, where we use the identity from Lemma~\ref{lemma: mat grad}. With $\mat_h e_x^\theta = e_v^\theta$ and writing $v_h^\theta = v_h^{*,\theta} + \theta e_v^\theta$, we obtain the following estimate for the first integral: (we write $L^p$ instead of $L^p(\Omh^\theta)$ and $\bfM$ and $\bfA$ instead of $\bfM(\bfx^*)$ and $\bfA(\bfx^*)$ in the occurring norms)
	\begin{align}
	\begin{aligned}
		&\left| \int_0^1 \int_{\Omh^\theta} e_u^\theta \mat_h \nb \cdot e_x^\theta u_h^{*,\theta} \dx \dTheta \right| \\
		&\le \int_0^1 \left\lVert e_u^\theta \right\rVert_{L^2} \left( \left\lVert \nb \cdot e_v^\theta \right\rVert_{L^2} + \left\lVert \nb v_h^{*,\theta} \right\rVert_{L^\infty} \left\lVert \nb e_x^\theta \right\rVert_{L^2} + \theta \left\lVert \nb e_v^\theta \right\rVert_{L^2} \left\lVert \nb e_x^\theta \right\rVert_{L^\infty} \right) \left\lVert u_h^{*,\theta} \right\rVert_{L^\infty} \\
		&\le c \left\lVert \bfe_\bfu \right\rVert_{L^2} \left( \lVert \nb e_v \rVert_{L^2} + \lVert \nb v_h^* \rVert_{L^\infty} \lVert \nb e_x \rVert_{L^2} + \lVert \nb e_v \rVert_{L^2} \lVert \nb e_x \rVert_{L^\infty} \right) \lVert u_h^* \rVert_{L^\infty} \\
		&\le c \lVert \bfe_\bfu \rVert_{\bfM} \left( \lVert \bfe_\bfv \rVert_{\bfA} + \lVert \nb v_h^* \rVert_{L^\infty} \lVert \bfe_\bfx \rVert_{\bfA} + \lVert \bfe_\bfv \rVert_{\bfA} \lVert \nb e_x \rVert_{L^\infty} \right) \lVert \bfu^* \rVert_{\infty} \\
		&\le c \lVert \bfe_\bfu \rVert_{\bfM(\bfx^*)} \left( \lVert \bfe_\bfv \rVert_{\bfA(\bfx^*)} + \lVert \bfe_\bfx \rVert_{\bfA(\bfx^*)} \right) \,.
	\end{aligned}
	\end{align}
	We analogously estimate the second integral and obtain
	\begin{align}
	\left| \int_0^1 \int_{\Omh^\theta} e_u^\theta \left(\nb \cdot e_x^\theta\right) u_h^{*,\theta} \nb \cdot v_h^\theta \dx \dTheta \right| \le c \lVert \bfe_\bfu \rVert_{\bfM(\bfx^*)} \left( \lVert \bfe_\bfv \rVert_{\bfA(\bfx^*)} + \lVert \bfe_\bfx \rVert_{\bfA(\bfx^*)} \right) \,.
	\end{align}
	
	Finally, we obtain for the first term of \eqref{eq: aux 3}:
	\begin{align}\label{eq: aux 6}
	- \bfe_\bfu\T \dfdt{}{t} \left( (\bfM(\bfx)-\bfM(\bfx^*)) \bfu^* \right) \le c \left\lVert \bfe_\bfu \right\rVert_{\bfM(\bfx^*)} \left( \left\lVert \bfe_\bfv \right\rVert_{\bfA(\bfx^*)} + \left\lVert \bfe_\bfx \right\rVert_{\bfA(\bfx^*)} \right) \,.
	\end{align}
	(ii) For the second term of \eqref{eq: aux 3}, we obtain similarly
	\begin{align}\label{eq: aux 7}
	\begin{aligned}
		&- \bfe_\bfu\T \dfdt{}{t} \left( (\bfM(\bfx)-\bfM(\bfx^*))\bfe_\bfu \right) \\
		&= -\frac{1}{2} \bfe_\bfu\T \left( \dfdt{}{t} (\bfM(\bfx)-\bfM(\bfx^*))\right) \bfe_\bfu - \frac{1}{2} \dfdt{}{t} \left( \bfe_\bfu\T (\bfM(\bfx)-\bfM(\bfx^*) \bfe_\bfu \right) \\
		&\le c \lVert \bfe_\bfu \rVert_{\bfM(\bfx^*)} \left( \lVert \bfe_\bfv \rVert_{\bfA(\bfx^*)} + \lVert \bfe_\bfx \rVert_{\bfA(\bfx^*)} \right) \lVert e_u \rVert_{L^\infty(\Omh(\bfx^*))} - \frac{1}{2} \dfdt{}{t} \left( \bfe_\bfu\T (\bfM(\bfx)-\bfM(\bfx^*)) \bfe_\bfu \right) \\
		&\le C \lVert \bfe_\bfu \rVert_{\bfM(\bfx^*)} \left( \lVert \bfe_\bfv \rVert_{\bfA(\bfx^*)} + \lVert \bfe_\bfx \rVert_{\bfA(\bfx^*)} \right) - \frac{1}{2} \dfdt{}{t} \left( \bfe_\bfu\T (\bfM(\bfx)-\bfM(\bfx^*)) \bfe_\bfu \right) \,.
	\end{aligned}
	\end{align}
	\erev
	(iii) For the third term, we use Lemma~\ref{lemma: matrix difference} and Lemma~\ref{lemma: aux 2} and estimate
	\begin{align}\label{eq: aux 8}
	\begin{aligned}
		\left| \bfe_\bfu\T (\bfA(\bfx)-\bfA(\bfx^*)) \bfu^* \right| &\le c \int_0^1 \lVert \nb e_u^\theta \rVert_{L^2(\Omh^\theta)} \lVert \nb e_x^\theta \rVert_{L^2(\Omh^\theta)} \lVert \nb u_h^{*,\theta} \rVert_{L^\infty(\Omh^\theta)} \dx \dTheta \\
		&\le c \lVert \nb e_u \rVert_{L^2(\Omh(\bfx^*))} \lVert \nb e_x \rVert_{L^2(\Omh(\bfx^*))} \lVert \nb u_h^* \rVert_{L^\infty(\Omh(\bfx^*))} \\
		&\le C \lVert \bfe_\bfu \rVert_{\bfA(\bfx^*)} \lVert \bfe_\bfx \rVert_{\bfA(\bfx^*)} \,,
	\end{aligned}
	\end{align}
	where we have used the smoothness assumption on $u$. 
	
	(iv) Similarly, we estimate 
	\begin{align}\label{eq: aux 9}
	\begin{aligned}
	\left| \bfe_\bfu\T (\bfA(\bfx)-\bfA(\bfx^*)) \bfe_\bfu \right| &= \left| \int_0^1 \int_{\Omh^\theta} \nb e_u^\theta \left( D_{\Omh^\theta} e_x^\theta \right) \nb e_u^\theta \dx \dTheta \right| \\
	&\le c \lVert \nb e_u \rVert_{L^2(\Omh(\bfx^*))}^2 \lVert \nb e_x \rVert_{L^\infty(\Omh(\bfx^*))} \\
	&\le c h^{(\kappa-3/2)/2} \lVert \bfe_\bfu \rVert_{\bfA(\bfx^*)}^2 \,.
	\end{aligned}
	\end{align}
	
	(v) For the fifth term, we use the Leibniz formula, an $L^\infty$-$L^2$-$L^2$-estimate and Lemma~\ref{lemma: aux 2} to obtain
	\begin{align}\label{eq: aux 10}
	\begin{aligned}
	\bfe_\bfu\T (\bff(\bfx)-\bff(\bfx^*)) &= \int_{\Omh^1} f e_u^1 \dx - \int_{\Omh^0} f e_u^0 \dx =\int_0^1 \dfdt{}{\theta} \int_{\Omh^\theta} f e_u^\theta \dx \dTheta \\
	&=\int_0^1 \int_{\Omh^\theta} \mat_\theta f e_u^\theta + f \underbrace{\mat_\theta e_u^\theta}_{=0} + f e_u^\theta \nb \cdot e_x^\theta \dx \dTheta \\
	&=\int_0^1 \int_{\Omh^\theta} f' e_x^\theta e_u^\theta + f e_u^\theta \nb \cdot e_x^\theta \dx \dTheta \\
	&\le \int_0^1 \lVert f' \rVert_{L^\infty(\Omh^\theta)} \lVert e_x^\theta \rVert_{L^2(\Omh^\theta)} \lVert e_u^\theta \rVert_{L^2(\Omh^\theta)} + \lVert f \rVert_{L^\infty(\Omh^\theta)} \lVert e_u^\theta \rVert_{L^2(\Omh^\theta)} \lVert \nb \cdot e_x^\theta \rVert_{L^2(\Omh^\theta)} \dTheta \\
	&\le c \lVert \bfe_\bfx \rVert_{\bfM(\bfx^*)} \lVert \bfe_\bfu \rVert_{\bfM(\bfx^*)} + c \lVert \bfe_\bfx \rVert_{\bfA(\bfx^*)} \lVert \bfe_\bfu \rVert_{\bfM(\bfx^*)} \\
	&\le c \lVert \bfe_\bfx \rVert_{\bfA(\bfx^*)} \lVert \bfe_\bfu \rVert_{\bfM(\bfx^*)} \,,
	\end{aligned}
	\end{align}
	where we have used the Poincar\'{e} inequality in the last step, which yields for $e_x^\ell \in H_0^1(\Omt)$
	\[ \lVert \bfe_\bfx \rVert_{\bfM(\bfx^*)} = \lVert e_x \rVert_{L^2(\Omh(\bfx^*(t)))} \le c \lVert e_x^\ell \rVert_{L^2(\Om(t))} \le c \lVert \nb e_x^\ell \rVert_{L^2(\Om(t))} \le c \lVert \bfe_\bfx \rVert_{\bfA(\bfx^*)} \,.\]
	
	(vi) For the last term of \eqref{eq: aux 3}, we use
	\begin{align}\label{eq: aux 11}
		\begin{aligned}
			\bfe_\bfu\T \bfM(\bfx^*) \bfd_\bfu &= \bfe_\bfu\T \bfK(\bfx^*)^{\frac{1}{2}} \bfK(\bfx^*)^{-\frac{1}{2}} \bfM(\bfx^*) \bfd_\bfu \\
			&\le \frac{1}{6} \lVert \bfK(\bfx^*)^{\frac{1}{2}} \bfe_\bfu \rVert_2^2 + C \lVert \bfK(\bfx^*)^{-\frac{1}{2}} \bfM(\bfx^*) \bfd_\bfu \rVert_2^2 \\
			&=\frac{1}{6} \lVert \bfe_\bfu \rVert_{\bfM(\bfx^*)}^2 + \frac{1}{6} \lVert \bfe_\bfu \rVert_{\bfA(\bfx^*)}^2 + C \lVert \bfd_\bfu \rVert_{\star,\bfx^*}^2 \,.
		\end{aligned}
	\end{align}
	Combining estimates (i)-(vi), using Young's inequality on each product, for $h \le h_0$ sufficiently small such that $c h^{(\kappa-3/2)/2} \le 1/6$, we obtain after absorbing $\lVert \bfe_\bfu \rVert_{\bfA(\bfx^*)}^2$:
	\begin{align}\label{eq: aux 12}
		\begin{aligned}
			\frac{1}{2} \dfdt{}{t} \lVert \bfe_\bfu \rVert_{\bfM(\bfx^*)}^2 + \frac{1}{2} \lVert \bfe_\bfu \rVert_{\bfA(\bfx^*)}^2 &\le c \lVert \bfe_\bfu \rVert_{\bfM(\bfx^*)}^2 + c \lVert \bfe_\bfx \rVert_{\bfA(\bfx^*)}^2 + c \lVert \bfe_\bfv \rVert_{\bfA(\bfx^*)}^2 \\
			&- \frac{1}{2} \dfdt{}{t} \left( \bfe_\bfu\T(\bfM(\bfx)-\bfM(\bfx^*)) \bfe_\bfu \right) + c \lVert \bfd_\bfu \rVert_{\star,\bfx^*}^2 \,.
		\end{aligned}
	\end{align}
	Inserting the estimates from Lemma~\ref{lem: stability}, we have
	\begin{align}\label{eq: aux 16}
	\begin{aligned}
	\dfdt{}{t} \lVert \bfe_\bfu \rVert_{\bfM(\bfx^*)}^2 + \lVert \bfe_\bfu \rVert_{\bfA(\bfx^*)}^2 &\le c \lVert \bfe_\bfu \rVert_{\bfM(\bfx^*)}^2 + c \lVert \bfd_\bfv \rVert_{\star,\bfx^*}^2 + c \int_0^t \lVert \bfd_\bfv(s) \rVert_{\star,\bfx^*(s)}^2 \mathrm{d} s \\
	&- \dfdt{}{t} \left( \bfe_\bfu\T (\bfM(\bfx)-\bfM(\bfx^*)) \bfe_\bfu \right) + c \lVert \bfd_\bfu \rVert_{\star,\bfx^*}^2 \,.
	\end{aligned}
	\end{align}
	Integrating from $0$ to $t$ for $t \in [0,t^*]$ and using a Gronwall argument as in part (C) of \cite[Proposition 6.1]{KLLP17}, we finally obtain the desired result for $t \in [0,t^*]$. The proof is then finished by showing that $t^*$ coincides with $T$, which is due to the same argument as in the previous section.
\end{proof}

\brev
\begin{remark}
	The previous lemma remains valid in the two-dimensional case, where the assumption~\eqref{eq: defect assumption2} is only required for $\kappa > 1$.
\end{remark}
\erev

\section{Defect bounds}\label{section: defect bounds}
In this section we show that the smallness assumptions in Lemma~\ref{lem: stability} and Lemma~\ref{lemma: stability u} are satisfied for $\kappa = k \ge 2$, which in combination with the stability results will lead to the desired error bounds. We remind that we have different dual norm definitions \eqref{eq: dual norm} and \eqref{eq: dual norm u} since the defect functions live in different finite element spaces. We avoid using different notations, because the dual norms only appear on $\bfd_\bfv$ and $\bfd_\bfu$, so it is always clear from context which definition is meant.

\subsection{The interpolating domain}
In order to estimate the defect $\bfd_\bfu$, we need to introduce a discrete velocity on the smooth domain, which is denoted by $\widehat{v}_h$.

\brev Recall \erev that $\Omt$ can be described as image $X(\cdot,t)(\Om_0)$ with a sufficiently smooth map $X: \Om_0 \times [0,T] \to \R^3$. The nodes $x_j^*(t) = X(x_j^0,t)$ define an interpolating domain which is parametrized over $\Omh^0$ via
\[ X_h^*(p_h,t) = \sum_{j=1}^N x_j^*(t) \varphi_j[\bfx(0)](p_h) \,,\quad p_h \in \Omh^0 \,.\]
The velocity of the interpolating domain is given, \brev using the transport property of the basis functions \eqref{transport property}, by
\[ v_h^*(\cdot,t) = \sum_{j=1}^N v_j^*(t) \varphi_j[\bfx^*(t)](\cdot) \quad \text{with }v_j^*(t) = \dfdt{}{t} x_j^*(t) \,. \]

For a material point $p_h(t) = X_h^*(p_h,t) \in \Omh(\bfx^*(t))$, $p_h \in \Omh^0$, on the interpolated exact domain, \brev this velocity satisfies \erev
\[ v_h^*(p_h(t),t) = \dfdt{}{t} X_h^*(p_h,t) \,. \]
Associated with $p_h(t)$ is its lifted material point $y(t) = \Lambda_h(p_h(t),t) \in \Omt$. This lifted point moves with velocity
\[ \widehat{v}_h(y(t),t) = \dfdt{}{t} y(t) = \dfdt{}{t} \Lambda_h(p_h(t),t) = (\partial_t \Lambda_h)(p_h(t),t) + v_h^*(p_h(t),t) \nb \Lambda_h(p_h(t),t) \,. \]
We can use these velocities to define discrete material derivatives for functions $\varphi_h$ and $\varphi$ defined on $\Omh(\bfx^*(t))$ and $\Omt$, respectively, via 
\begin{align}\label{def: discrete material derivatives}
	\mat_{v_h^*} \varphi_h &= \partial_t \varphi_h + v_h^* \cdot \nb \varphi_h \,,\\
	\mat_{\widehat{v}_h} \varphi &= \partial_t \varphi + \widehat{v}_h \cdot \nb \varphi \,.	
\end{align}
The basis functions $\varphi_j[\bfx^*]$ enjoy the transport property $\mat_{v_h^*} \varphi_j = 0$. It is not true in general that the lifted basis functions \brev satisfy $\mat_v \varphi_j^\ell = 0$, with $\mat_v = \mat$ as defined in~\eqref{eq: material derivative}. In particular, we have $(\mat_{v_h^*} \varphi_j)^\ell \ne \mat \varphi_j^\ell$ in general. \erev The following lemma shows that the transport property is satisfied with the discrete velocity defined above, which will be crucial in the following.
\begin{lemma}\label{lemma: transport property discrete lifted}
	For $j=1,\ldots,N$, we have
	\[ \mat_{\widehat{v}_h} \varphi_j^\ell = 0 \,. \]
	In particular, we have for any finite element function $\eta_h \in S_h(\bfx^*(t))$ and for any $u \in H^{k+1}(\Omt)$
	\[ \left( \mat_{v_h^*} \eta_h \right)^\ell = \mat_{\widehat{v}_h} \eta_h^\ell \quad\text{and}\quad  \left( \mat_{v_h^*} \widetilde{I}_h u \right)^\ell = \mat_{\widehat{v}_h} I_h u = I_h \mat_{\widehat{v}_h} u \,. \]
\end{lemma}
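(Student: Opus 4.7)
The plan is to prove the first identity by a direct chain-rule computation along material trajectories, then deduce the consequences for finite element functions and interpolations by linearity and by a careful identification of $\widehat{v}_h$ with the exact velocity $v$ at the nodal points.

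For the first identity, I would start from a material point $y(t) = \Lambda_h(p_h(t),t)$ with $p_h(t) = X_h^*(p_h,t)$, $p_h \in \Omh^0$. By the very definition of the lift, $\varphi_j^\ell(y(t),t) = \varphi_j[\bfx^*(t)](p_h(t))$. Now $p_h(t)$ is a discrete material point on $\Omh(\bfx^*(t))$, so the transport property \eqref{transport property} of the basis functions gives
\[ \varphi_j[\bfx^*(t)](X_h^*(p_h,t)) = \varphi_j[\bfx^*(0)](p_h), \]
which is constant in $t$. Differentiating $t \mapsto \varphi_j^\ell(y(t),t)$ and noting that, by construction, $\dot{y}(t) = \widehat{v}_h(y(t),t)$, we obtain $\partial_t \varphi_j^\ell + \widehat{v}_h \cdot \nabla \varphi_j^\ell = 0$, i.e.\ $\mat_{\widehat{v}_h} \varphi_j^\ell = 0$.

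For the second part, given $\eta_h(\cdot,t) = \sum_{j=1}^N \eta_j(t) \varphi_j[\bfx^*(t)]$, the transport property yields $\mat_{v_h^*} \eta_h = \sum_j \dot\eta_j \varphi_j[\bfx^*]$, whose lift is $\sum_j \dot\eta_j \varphi_j^\ell$. On the other hand, applying $\mat_{\widehat{v}_h}$ to $\eta_h^\ell = \sum_j \eta_j \varphi_j^\ell$ and using the first identity to kill the $\mat_{\widehat{v}_h} \varphi_j^\ell$ contributions gives the same expression, so $(\mat_{v_h^*}\eta_h)^\ell = \mat_{\widehat{v}_h} \eta_h^\ell$. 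Specialising to $\eta_h = \widetilde{I}_h u$ with $\eta_j(t) = u(x_j^*(t),t)$ gives $\dot\eta_j = \frac{d}{dt} u(X(x_j^0,t),t) = \mat u(x_j^*(t),t)$, hence $(\mat_{v_h^*} \widetilde{I}_h u)^\ell = \sum_j \mat u(x_j^*(t),t) \varphi_j^\ell = \mat_{\widehat{v}_h} I_h u$.

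For the last equality $\mat_{\widehat{v}_h} I_h u = I_h \mat_{\widehat{v}_h} u$, it remains to check that $\mat u(x_j^*(t),t)$ are the nodal values of $\mat_{\widehat{v}_h} u$ at $x_j^*(t)$. Since the interpolated exact nodes $x_j^*(t) = X(x_j^0,t)$ coincide with their own lifts (the lift $\Lambda_h(\cdot,t)$ fixes the Lagrange nodes by construction), the trajectory $y(t)$ starting at the node $x_j^0$ satisfies $y(t) = x_j^*(t)$, and hence $\widehat{v}_h(x_j^*(t),t) = \dot{x}_j^*(t) = v(x_j^*(t),t)$. Therefore $\mat_{\widehat{v}_h} u$ and $\mat u$ agree at the nodes, which gives the claim.

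The main obstacle I anticipate is notational rather than conceptual, namely keeping track of which material derivative is acting on which domain and justifying that the lift fixes the Lagrange nodes (including those on the curved boundary), so that $\widehat{v}_h$ matches $v$ at $x_j^*(t)$; this is what ultimately allows the interpolation operator to commute with the material derivative.
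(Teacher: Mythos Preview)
Your proof is correct and follows precisely the approach indicated in the paper, which merely states that the result follows from Definition~\ref{def: lift}, the chain rule and the transport property of the basis functions (with a reference to \cite[Lemma~4.1]{DE13a}). Your argument is a careful unpacking of that one-line proof; in particular, your observation that $\Lambda_h(\cdot,t)$ fixes the Lagrange nodes (so that the lifted trajectory through $x_j^0$ coincides with the exact trajectory $x_j^*(t)=X(x_j^0,t)$ and hence $\widehat{v}_h(x_j^*(t),t)=v(x_j^*(t),t)$) is exactly the point needed to justify the commutation $\mat_{\widehat{v}_h} I_h u = I_h \mat_{\widehat{v}_h} u$.
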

\begin{proof}\noindent
	\brev Follows from Definition~\ref{def: lift}, the chain rule and the transport property of the basis functions, cf.  \cite[Lemma 4.1]{DE13a}. \erev
\end{proof}

For the following defect estimate, we introduce the notation
\begin{align}
	q_h^*(\eta_h,\chi_h) &= \int_{\Omh^*(t)} \eta_h \chi_h \nb \cdot v_h^* \dx \,,\\
	\widehat{q}_h(\eta,\chi) &= \int_{\Omt} \eta \chi \nb \cdot \widehat{v}_h \dx \,.
\end{align}
\begin{lemma}\label{lemma: transport properties}
	For any $\eta(\cdot,t)$, $\chi(\cdot,t) \in H^1(\Omt)$, we have
	\begin{align}
		\dfdt{}{t} m(\eta,\chi) &= m(\mat \eta,\chi) + m (\eta,\mat \chi) + q(\eta,\chi) \,,\\
		\dfdt{}{t} m(\eta,\chi) &= m\left(\mat_{\widehat{v}_h} \eta,\chi\right) + m\left(\eta, \mat_{\widehat{v}_h} \chi\right) + \widehat{q}_h(\eta,\chi) \,.
	\end{align}
	On the discrete domain, for $\eta_h(\cdot,t)$, $\chi_h(\cdot,t) \in S_h(\Omh(\bfx^*(t)))$, we have
	\begin{align}
	\dfdt{}{t} m_h^*(\eta_h,\chi_h) = m_h^*\left( \mat_{v_h^*} \eta_h , \chi_h \right) + m_h^*\left( \eta_h, \mat_{v_h^*} \chi_h \right) + q_h^*(\eta_h,\chi_h) \,.
	\end{align}
\end{lemma}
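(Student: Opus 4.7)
The plan is to derive all three identities from the Reynolds transport theorem applied with an appropriately chosen velocity field, combined with the product rule. The common starting point is the pointwise identity
\[ \partial_t(\eta\chi) + \nb\cdot(\eta\chi V) = (\mat_V \eta)\chi + \eta(\mat_V \chi) + \eta\chi\,\nb\cdot V, \]
obtained by writing $\mat_V = \partial_t + V\cdot\nb$ and expanding the divergence. Integrating over the moving domain and using the Reynolds formula
\[ \dfdt{}{t} \int_{\Omega(t)} \eta\chi\dx = \int_{\Omega(t)} \partial_t(\eta\chi)\dx + \int_{\partial\Omega(t)} \eta\chi\, v\cdot\nu\,\dS \]
then expresses $\dfdt{}{t} m(\eta,\chi)$ in terms of any auxiliary field $V$, provided only that the normal component of $V$ on $\partial\Omega(t)$ agrees with that of $v$, since only $V\cdot\nu|_{\partial\Omega(t)}$ enters through the divergence theorem applied to $\int \nb\cdot(\eta\chi V)\dx$.

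For identity (1) I take $V=v$, so $\mat_V = \mat$ and the boundary matching is trivial; the divergence theorem converts the boundary integral above into $\int_{\Omt}\nb\cdot(\eta\chi v)\dx$, and the product-rule decomposition produces $m(\mat\eta,\chi) + m(\eta,\mat\chi) + q(\eta,\chi)$. Identity (3) is the same argument performed on the piecewise-polynomial evolving domain $\Omh^*(t)$: by construction the nodes move under $\dot{x}_j^* = v_j^*$, and the associated finite element function $v_h^*$ is the genuine (globally continuous) velocity field of $\Omh^*(t)$, so Reynolds applies on each curved simplex and sums to the global statement without boundary issues.

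For identity (2) the domain $\Omt$ still moves with velocity $v$, but I want to carry out the expansion with $V = \widehat{v}_h$ instead of $v$. By the remark above this is admissible as soon as $\widehat{v}_h\cdot\nu = v\cdot\nu$ on $\Gat$; granted this, the three-term product-rule decomposition applied with $V=\widehat{v}_h$ yields $m(\mat_{\widehat{v}_h}\eta,\chi) + m(\eta,\mat_{\widehat{v}_h}\chi) + \widehat{q}_h(\eta,\chi)$, which is exactly identity (2).

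The one substantive step is therefore verifying the normal-matching on $\Gat$. This is where I expect the main obstacle, and it relies on the setup of the lift: by Definition~\ref{def: lift}, $\Lambda_h(\cdot,t)$ maps the discrete boundary $\partial\Omh(\bfx^*(t))$ bijectively onto $\Gat$. Hence any material trajectory $y(t) = \Lambda_h(p_h(t),t)$ with $p_h(t)$ confined to the discrete boundary is automatically confined to $\Gat$, and differentiation in $t$ shows that $\widehat{v}_h(y(t),t) = \dfdt{}{t} y(t)$ has the same normal component as the parametrization of $\Gat$ itself, namely $v\cdot\nu$. The two supporting facts are that $\bfx^\Ga = \bfx^{\Ga,*}$ (because $v^\Ga$ is prescribed, so the boundary nodes evolve exactly) and the regularity of $\Lambda_h$ established earlier in the section. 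Once this normal-matching is in hand, the rest of the proof is the routine product-rule/divergence-theorem bookkeeping sketched above.
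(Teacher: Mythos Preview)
Your argument is correct and is essentially what the paper does: the paper's proof is a one-line citation to the Leibniz formula in \cite[Lemma~7.12]{ER17pre}, and your proposal spells out that formula applied with the three velocity fields $v$, $\widehat{v}_h$, $v_h^*$. One small remark: the fact $\bfx^\Gamma = \bfx^{\Gamma,*}$ you invoke concerns the \emph{numerical} nodes $\bfx$ and plays no role here, since the lemma involves only $\bfx^*$; for the normal-matching on $\Gat$ you need only that $\Lambda_h(\cdot,t)$ maps $\partial\Omega_h(\bfx^*(t))$ onto $\Gamma(t)$, which is part of the construction of $\Lambda_h$ (equivalently, $\widehat{v}_h$ is by definition the velocity of the composed flow $t\mapsto \Lambda_h(X_h^*(\cdot,t),t)$ parametrizing $\Omega(t)$, so the Leibniz formula applies directly with $\widehat{v}_h$).
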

\begin{proof}
	Follows directly from the Leibniz formula (see \cite[Lemma 7.12]{ER17pre}).
\end{proof}

We are now in position to formulate and prove the required defect estimates.
\begin{lemma}\label{lemma: defect estimates}
	Let the domain $\Omt$ and the exact solution $(u,v,X)$ be sufficiently smooth. Then there is a constant $c > 0$ and an $h_0 > 0$, such that for all $h \le h_0$ and all $t \in [0,T]$, the defects $\bfd_{\bfv^\Om}$ and $\bfd_\bfu$ are bounded by
	\begin{align}
	\lVert \bfd_{\bfv^\Om} \rVert_{\star,\bfx^*} &\le c h^k \,, \\
	\lVert \bfd_\bfu \rVert_{\star,\bfx^*} &\le c h^k \,.
	\end{align}
\end{lemma}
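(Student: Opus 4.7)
The plan is to treat the two defects separately by converting the algebraic residuals, obtained from inserting the interpolated exact solution into the matrix--vector formulation, into residuals of the exact weak form on $\Omt$ via the geometric approximation estimates of Lemma~\ref{lemma: error bilinear forms}. One then cancels the leading contributions with the exact PDE, so that only interpolation errors of order $h^k$ remain, which are controlled by Proposition~\ref{lemma: interpolation}. Throughout, one tests with a time-independent nodal vector and extends it in $t$ via the basis functions on $\Omh(\bfx^*(t))$ so that $\mat_{v_h^*}\varphi_h=0$ holds by the transport property.

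\textbf{Velocity defect.} For any $\psi_h\in\Sohxt^3$ with nodal vector $(0,\bfz^{\Om,\T})^\T$, the definition of $\bfd_{\bfv^\Om}$ in \eqref{eq: ODE system defects} yields
\[ \int_{\Omh(\bfx^*(t))} d_h^v \cdot \psi_h = (0,\bfz^{\Om,\T})\bfA(\bfx^*)\bfv^* = a_h^*(v_h^*,\psi_h), \]
where $v_h^*$ is the finite element interpolation of $v$. Lemma~\ref{lemma: error bilinear forms} gives $|a_h^*(v_h^*,\psi_h)-a(v_h^{*,\ell},\psi_h^\ell)|\le c h^k\lVert\nb v_h^{*,\ell}\rVert_{L^2(\Omt)}\lVert\nb\psi_h^\ell\rVert_{L^2(\Omt)}$. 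Since $\Lambda_h$ maps $\Gah(\bfx^*(t))$ onto $\Gat$, the lift $\psi_h^\ell$ lies in $H_0^1(\Omt)^3$, and the harmonicity of $v$ together with Green's formula yields $a(v,\psi_h^\ell)=0$. Hence $a(v_h^{*,\ell},\psi_h^\ell)=a(I_h v-v,\psi_h^\ell)$ is bounded by $c h^k\lVert v\rVert_{H^{k+1}(\Omt)}\lVert\nb\psi_h^\ell\rVert_{L^2(\Omt)}$ via Proposition~\ref{lemma: interpolation}. Combining with Lemma~\ref{lem: norm equivalency} and taking the supremum over $\bfz^\Om$ in the dual norm \eqref{eq: dual norm} yields $\lVert\bfd_{\bfv^\Om}\rVert_{\star,\bfx^*}\le c h^k$.

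\textbf{Diffusion defect.} Fix $t_0$ and a vector $\bfz\in\R^N$, and let $\varphi_h(\cdot,t)=\sum_j z_j\varphi_j[\bfx^*(t)]$, so that $\mat_{v_h^*}\varphi_h=0$. Using the transport formula of Lemma~\ref{lemma: transport properties}, the defect equation \eqref{eq: defect du} becomes
\[ \bfz^\T\bfM(\bfx^*)\bfd_\bfu = m_h^*\!\left(\mat_{v_h^*} u_h^*,\varphi_h\right)+q_h^*(u_h^*,\varphi_h)+a_h^*(u_h^*,\varphi_h)-m_h^*(f,\varphi_h), \]
where $u_h^*=\widetilde I_h u$. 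Each discrete bilinear form is transferred to $\Omt$ by Lemma~\ref{lemma: error bilinear forms} (with an analogous estimate for $q_h^*$ versus $\widehat q_h$), producing an $\mathcal O(h^k)$ geometric error. Lemma~\ref{lemma: transport property discrete lifted} provides the crucial identity $(\mat_{v_h^*}\widetilde I_h u)^\ell=\mat_{\widehat v_h}I_h u=I_h\mat_{\widehat v_h}u$, so the lifted discrete expression reads $m(\mat_{\widehat v_h}I_h u,\varphi_h^\ell)+\widehat q_h(I_h u,\varphi_h^\ell)+a(I_h u,\varphi_h^\ell)-m(f,\varphi_h^\ell)$, plus $\mathcal O(h^k)$.

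\textbf{Cancellation and conclusion.} Testing the strong formulation \eqref{eq: conservation diffusion coupled to semi-explicit velocity law} with $\varphi_h^\ell$ and splitting $\mat u=\mat_{\widehat v_h}u+(v-\widehat v_h)\cdot\nb u$ and $\nb\cdot v=\nb\cdot\widehat v_h+\nb\cdot(v-\widehat v_h)$ gives
\begin{align*}
  m(\mat_{\widehat v_h}u,\varphi_h^\ell)+\widehat q_h(u,\varphi_h^\ell)+a(u,\varphi_h^\ell) &= m(f,\varphi_h^\ell) - m\bigl((v-\widehat v_h)\cdot\nb u,\varphi_h^\ell\bigr)\\
  &\quad -\int_{\Omt} u\,\varphi_h^\ell\,\nb\cdot(v-\widehat v_h)\dx.
\end{align*}
Subtracting from the lifted discrete identity, the defect reduces to the interpolation residuals $m(\mat_{\widehat v_h}(I_h u-u),\varphi_h^\ell)$, $\widehat q_h(I_h u-u,\varphi_h^\ell)$, $a(I_h u-u,\varphi_h^\ell)$, each $\mathcal O(h^k)\lVert\varphi_h^\ell\rVert_{H^1(\Omt)}$ by Proposition~\ref{lemma: interpolation} applied to $u$ and $\mat_{\widehat v_h}u$, plus the geometric error and the two correction terms in $v-\widehat v_h$. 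Taking the supremum over $\bfz$ in the dual norm \eqref{eq: dual norm u} yields $\lVert\bfd_\bfu\rVert_{\star,\bfx^*}\le c h^k$.

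\textbf{Main obstacle.} The delicate point is the control of $v-\widehat v_h$. Away from $\Gat$, $\Lambda_h$ is the identity and $\widehat v_h=v_h^*$, so the interpolation estimate alone gives $\lVert v-\widehat v_h\rVert_{L^\infty}\le c h^k$; on boundary elements one must additionally exploit $\lVert\mathrm{id}-\Lambda_h\rVert_{W^{1,\infty}}\le c h^k$ from the isoparametric construction of Section~\ref{subsec: high order domain approximation} to bound the extra contribution coming from $\partial_t\Lambda_h$ and $\nb\Lambda_h-I$ in the definition of $\widehat v_h$. Without this, the correction terms involving $v-\widehat v_h$ would not achieve the required order $h^k$.
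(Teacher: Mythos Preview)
Your proof is correct and follows essentially the same route as the paper: express the defect via the discrete transport formula with $\mat_{v_h^*}\varphi_h=0$, subtract the exact equation rewritten in the $\widehat v_h$-formulation (using $\mat_{\widehat v_h}\varphi_h^\ell=0$ from Lemma~\ref{lemma: transport property discrete lifted}), and bound the resulting differences by the geometric approximation errors of Lemma~\ref{lemma: error bilinear forms} and \cite[Lemmas~7.14--7.15]{ER17pre} together with the interpolation estimate of Proposition~\ref{lemma: interpolation}; the velocity-defect argument is identical to the paper's. The only notable difference is that you explicitly isolate the $v-\widehat v_h$ correction terms (your ``main obstacle''), whereas the paper writes $0=\tfrac{\mathrm d}{\mathrm d t}m(u,\varphi_h^\ell)+a(u,\varphi_h^\ell)-m(f,\varphi_h^\ell)$ and then absorbs this contribution implicitly through the cited \cite{ER17pre} lemmas; your treatment is in fact the more careful of the two.
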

\begin{proof}\noindent
	We start with estimating $\bfd_\bfu$. The defect equation \eqref{eq: defect du} is equivalent to
	\begin{align}
		m_h^*(d_u,\varphi_h) &= \dfdt{}{t} m_h^*(\widetilde{I}_h u,\varphi_h) + a_h^* (\widetilde{I}_h u, \varphi_h) - m_h^*(f,\varphi_h) \\
		&= m_h^*\left( \mat_{v_h^*} \widetilde{I}_h u, \varphi_h \right) + q_h^*(\widetilde{I}_h u,\varphi_h) + a_h^*(\widetilde{I}_h u, \varphi_h ) - m_h^*(f,\varphi_h)
	\end{align}
	for all $\varphi_h \in S_h(\bfx^*)$. The exact solution $u$ satisfies, using Lemma~\ref{lemma: transport properties} and Lemma~\ref{lemma: transport property discrete lifted},
	\begin{align}
		0 &= \dfdt{}{t} m(u,\varphi_h^\ell) + a(u,\varphi_h^\ell) - m(f,\varphi_h^\ell) \\
		&=m\left( \mat_{\widehat{v}_h} u, \varphi_h^\ell \right) + \widehat{q}_h(u,\varphi_h^\ell) + a(u,\varphi_h^\ell) - m(f,\varphi_h^\ell) \,.
	\end{align}
	Subtracting both terms yields
	\begin{align}\label{eq: defect u}
		m_h^*(d_u,\varphi_h) &= \left( m_h^*\left( \mat_{v_h^*} \widetilde{I}_h u, \varphi_h \right) - m\left( \mat_{\widehat{v}_h} u, \varphi_h^\ell \right) \right) + \left( q_h^*( \widetilde{I}_h u,\varphi_h) - \widehat{q}_h(u,\varphi_h^\ell) \right) \\
		&+\left( a_h^*(\widetilde{I}_h u,\varphi_h) - a(u,\varphi_h^\ell) \right) - \left( (m_h^*(f,\varphi_h) - m(f,\varphi_h^\ell) ) \right) \,.
	\end{align}
	We will estimate the four differences separately.
	
	(i) For the first difference, we use $\mat_{\widehat{v}_h} I_h u = I_h \mat_{\widehat{v}_h} u$:
	\begin{align}
		\left| m_h^* \left( \mat_{v_h^*} \widetilde{I}_h u, \varphi_h \right) - m\left( \mat_{\widehat{v}_h} u, \varphi_h^\ell \right) \right| &\le \left| m_h^* \left( \mat_{v_h^*} \widetilde{I}_h u, \varphi_h \right) - m\left( \mat_{\widehat{v}_h} I_h u, \varphi_h^\ell \right) \right| +\left| m\left( I_h \mat_{\widehat{v}_h} u - \mat_{\widehat{v}_h}u,\varphi_h^\ell \right) \right| \,.
	\end{align}
	For the first term, note that $(\mat_{v_h^*} \widetilde{I}_h u)^\ell = \mat_{\widehat{v}_h} I_h u$, so Lemma~\ref{lemma: error bilinear forms} yields
	\begin{align}
		\left| m_h^* \left( \mat_{v_h^*} \widetilde{I}_h u, \varphi_h \right) - m\left( \mat_{\widehat{v}_h} I_h u, \varphi_h^\ell \right) \right| &\le c h^k \left\lVert \mat_{\widehat{v}_h} I_h u \right\rVert_{L^2(\Om)} \lVert \varphi_h^\ell \rVert_{L^2(\Om)} \,.
	\end{align}
	Now we bound
	\begin{align}
		\left\lVert \mat_{\widehat{v}_h} I_h u \right\rVert_{L^2(\Om)} &= \left\lVert I_h \mat_{\widehat{v}_h} u - \mat_{\widehat{v}_h} u + \mat_{\widehat{v}_h} u \right\rVert_{L^2(\Om)} \le (c h^k + 1) \left\lVert \mat_{\widehat{v}_h} u - \mat u + \mat u \right\rVert_{L^2(\Om)} \le c \,,
	\end{align}
	where we have used that $\lVert \mat_{\widehat{v}_h}u-\mat u \rVert_{L^2(\Om)} \le ch^{k+1}$ (see \cite[Lemma 7.14]{ER17pre}) and the regularity assumption on $u$. Similarly
	\begin{align}
		\left| m\left( I_h \mat_{\widehat{v}_h} u - \mat_{\widehat{v}_h} u, \varphi_h^\ell \right) \right| &\le \left\lVert I_h \mat_{\widehat{v}_h} u - \mat_{\widehat{v}_h} u \right\rVert_{L^2(\Om)} \left\lVert \varphi_h^\ell \right\rVert_{L^2(\Om)} \le c h^k \left\lVert \mat_{\widehat{v}_h} u \right\rVert \left\lVert \varphi_h^\ell \right\rVert_{L^2(\Om)} \le c h^k  \left\lVert \varphi_h^\ell \right\rVert_{L^2(\Om)} \,.
	\end{align}
	Altogether, we have for the first difference of \eqref{eq: defect u}
	\begin{align}
		\left| m_h^* \left( \mat_{v_h^*} \widetilde{I}_h u, \varphi_h \right) - m\left( \mat_{\widehat{v}_h} u, \varphi_h^\ell \right) \right| &\le c h^k \left\lVert \varphi_h^\ell \right\rVert_{L^2(\Om)} \,.
	\end{align}
	
	(ii) In a similar way:
	\begin{align*}
		\left| q_h^*(\widetilde{I}_h u, \varphi_h) - \widehat{q}_h(u,\varphi_h^\ell)\right| &\le \left| q_h^*(\widetilde{I}_h u, \varphi_h) - \widehat{q}_h(I_h u, \varphi_h^\ell)\right| + \left| \widehat{q}_h(I_h u -u, \varphi_h^\ell ) \right| \,.
	\end{align*}
	For the first term, we use \cite[Lemma 7.15]{ER17pre}:
	\begin{align*}
		\left| q_h^*(\widetilde{I}_h u, \varphi_h) - \widehat{q}_h(I_h u, \varphi_h^\ell)\right| \le c h^{k+1} \left\lVert I_h u \right\rVert_{L^2(\Om)} \left\lVert \varphi_h^\ell \right\rVert \le c h^{k+1} \left\lVert \varphi_h^\ell \right\rVert_{L^2(\Om)} \,.
	\end{align*}
	For the second term, we use an $L^2$-$L^2$-$L^\infty$ estimate and \cite[Lemma 7.14]{ER17pre} to bound $\lVert \nb \widehat{v}_h \rVert_{L^\infty(\Om)}$:
	\begin{align*}
		\left| \widehat{q}_h(I_h u -u, \varphi_h^\ell ) \right| &\le \lVert I_h u - u \rVert_{L^2(\Om)} \lVert \varphi_h^\ell \rVert_{L^2(\Om)} \lVert \nb \cdot \widehat{v}_h \rVert_{L^\infty(\Om)} \\
		&\le c h^{k+1} \lVert \varphi_h^\ell \rVert_{L^2(\Om)} \lVert \nb \widehat{v}_h \rVert_{L^\infty(\Om)} \le c h^{k+1} \lVert \varphi_h^\ell \rVert_{L^2(\Om)} \,.
	\end{align*}
	
	(iii) The third term of \eqref{eq: defect u} is estimated similarly:
	\begin{align*}
		\left| a_h^*(\widetilde{I}_h u, \varphi_h) - a(u,\varphi_h^\ell) \right| &\le \left| a_h^*(\widetilde{I}_h u,\varphi_h ) - a(I_h u, \varphi_h^\ell) \right| + \left| a(I_h u - u, \varphi_h^\ell ) \right| \\
		&\le c h^k \lVert \nb I_h u \rVert_{L^2(\Om)} \lVert \nb \varphi_h^\ell \rVert_{L^2(\Om)} + \lVert \nb (I_h u - u) \rVert_{L^2(\Om)} \lVert \nb \varphi_h^\ell \rVert_{L^2(\Om)} \\
		&\le c h^k \lVert \varphi_h^\ell \rVert_{H^1(\Om)} \,.
	\end{align*}
	
	(iv) For the last term of \eqref{eq: defect u}, we immediately have
	\[ \left| m_h^*(f,\varphi_h) - m(f,\varphi_h^\ell) \right| \le c h^k \lVert f \rVert_{L^2(\Om)} \lVert \varphi_h^\ell \rVert_{L^2(\Om)} \,. \]
	
	Putting those four estimates together, using norm equivalence, we obtain
	\[ \lVert \bfd_\bfu \rVert_{\star,\bfx^*} = \lVert d_u \rVert_{H_h^{-1}(\Om(\bfx^*))} = \sup_{0 \ne \varphi_h \in S_h(\bfx^*)} \frac{m_h^*(d_u,\varphi_h)}{\lVert \varphi_h \rVert_{H^1(\Om(\bfx^*))}} \le c h^k \,. \]
	
	Now we estimate $\bfd_{\bfv^\Om}$, which is defined by the defect equation \eqref{eq: ODE system defects}. We set $\bfd_\bfv\T = (0,\bfd_{\bfv^\Om}\T)$ and $\bfw\T = (0,\bfw^{\Om,\mathrm{T}})$ for $\bfw^\Om \in \R^{3 N_\Om}$ and test with $\bfw^\Om$ to obtain with a computation similar to \eqref{eq: aux combine} (omitting the tensor notation) $\bfw\T \bfM(\bfx^*) \bfd_\bfv =\bfw\T \bfA(\bfx^*) \bfv^*$ which is equivalent to
	\begin{align}\label{eq: defect v}
		\int_{\Omh(\bfx^*)} \varphi_h \cdot d_h \dx &= \int_{\Omh(\bfx^*)} \nb \varphi_h \cdot \nb v_h^* \dx =a_h(\varphi_h, \widetilde{I_h} v) - a(\varphi_h^\ell, I_h v) + a(\varphi_h^\ell, I_h v)
	\end{align}
	for all $\varphi_h \in S_{0,h}(\bfx^*)$. We will estimate the first difference and the second term of \eqref{eq: defect v} separately, starting with the second term. Since $\varphi_h \in S_{0,h}(\bfx^*)$, we have $\varphi_h^\ell \in H_0^1(\Omt)$ and thus $a(\varphi_h^\ell,v) = 0$. With Proposition~\ref{lemma: interpolation}, we obtain
	\begin{align*}
		a(\varphi_h^\ell,I_h v) &= a(\varphi_h^\ell,I_h v - v) \le \lVert \nb \varphi_h^\ell \rVert_{L^2(\Om)} \lVert \nb (I_h v - v) \rVert_{L^2(\Om)} \le c h^k \lVert \nb \varphi_h^\ell \rVert_{L^2(\Om)} \lVert v \rVert_{H^{k+1}(\Om)} \,.
	\end{align*}
	The first difference in \eqref{eq: defect v} is estimated analogously to (iii) in the first part of this proof and yields
	\begin{align*}
		|a_h(\varphi_h,\widetilde{I_h} v) - a (\varphi_h^\ell,I_h v) | 
		\le  c h^k \lVert \nb \varphi_h^\ell \rVert_{L^2(\Om)}
	\end{align*}
	for $h \le h_0$ sufficiently small using the regularity assumption.

	Putting these estimates together yields with Lemma~\ref{lem: norm equivalency}:
	\[ \lVert \bfd_{\bfv^\Om} \rVert_{\star,\bfx^*} = \sup_{0 \ne \bfw^\Om \in \R^{3 N_\Om}} \frac{\bfd_{\bfv^\Om}\T \bfM_{22}(\bfx*) \bfw^\Om}{\lVert \bfw^\Om \rVert_{\bfA_{22}(\bfx^*)}} \le c h^k \,. \]
\end{proof}

\section{Proof of Theorem \ref{theorem: error bound}}\label{section: proof}
We prove the first error bound. The remaining ones are shown analogously. The error is decomposed using interpolation and lift:
\[ u_h^L - u = \left( \widehat{u}_h - \widetilde{I}_h u \right)^\ell + \left( I_h u - u \right) \,.\]
The right term can be bounded by $c h^k$ in the $H^1$-norm using an interpolation estimate. For the first term we obtain, using norm equivalence, Lemma~\ref{lem: stability} and Lemma~\ref{lemma: defect estimates}
\begin{align}
\left\lVert (\widehat{u}_h - \widetilde{I}_hu)^\ell \right\rVert_{L^2(\Omt)} &\le c \lVert \widehat{u}_h - \widetilde{I}_h u \rVert_{L^2(\Omh(\bfx^*(t)))} = c \lVert \bfe_\bfu \rVert_{\bfM(\bfx^*)} \\
&\le c \int_0^t \lVert \bfd_\bfu(s) \rVert_{\star,\bfx^*}^2 + \lVert \bfd_\bfv(s) \rVert_{\star,\bfx^*}^2 \mathrm{d}s \le c h^k \,.
\end{align}
Analogously
\[ \lVert \nb ( \widehat{u}_h - \widetilde{I}_h u )^\ell \rVert_{L^2(\Omt)} \le c \lVert \nb ( \widehat{u}_h - \widetilde{I}_h u ) \rVert_{L^2(\Omh(\bfx^*(t)))} = c \lVert \bfe_\bfu \rVert_{\bfA(\bfx^*)} \,.\]
Lemma~\ref{lemma: stability u} and Lemma~\ref{lemma: defect estimates} yield the result. The remaining estimates are shown analogously.
\brev
\begin{remark}\emph{($L^2$-estimate)}\\
	The convergence rate in $u, v$ and $X$ in the $L^2$-norm is expected to be of order $k+1$, which is also reflected in the numerical experiments down below. In order to prove $\mathcal{O}(h^{k+1})$-error bounds  for the diffusion equation, one could work with the Ritz projection $R_h u$ instead of the interpolation $I_h u$. In fact, defining a Ritz projection as described in \cite[Section 3.3.2]{ER17pre}, cf. \cite{DE13a}, we are able to prove $\sup_{t \in [0,T]} \lVert \bfd_\bfu(t) \rVert_{\star,\bfx^*(t)} \le c h^{k+1}$. This yields the error bound
	\begin{align}
		\lVert \bfe_\bfu(t) \rVert_{\bfM(\bfx^*)}^2 + \int_0^t \lVert \bfe_\bfu(s) \rVert_{\bfA(\bfx^*(s))}^2 \mathrm{d}s \le C h^{2k+2} + c \int_0^t \lVert \bfd_\bfv(s) \rVert_{\star,\bfx^*(s)}^2 \mathrm{d} s\,.
	\end{align}
	It is further possible to define a Ritz map for the Laplace equation for the velocity, taking the inhomogeneous boundary conditions into account. However, taking the Ritz projection instead of the finite element interpolation implies that the corresponding error $\bfe_\bfv$ does not vanish on the boundary anymore. This induces a different defect $\bfd_\bfv$ in $v$ and an additional defect $\bfd_\bfx$ in the equation $\dot{\bfe}_\bfx = \bfe_\bfv + \bfd_\bfx$, where $\bfd_\bfx$ can be considered as the error between the finite element interpolation $I_h v$ and the Ritz projection $R_h v$ of $v$. While it is indeed possible to obtain an $\mathcal{O}(h^{k+1})$ bound for $\bfd_\bfv$, this is no longer true for the new defect $\bfd_\bfx$, which still has to be estimated in the $\bfA$-norm, see \eqref{ineq: milestone 1}, yielding only a $h^k$ error bound.
\end{remark}
\erev

\section{Numerical experiments}\label{section: examples}
\brev In this section we illustrate the theoretical results with various numerical experiments. Fitting the layout of the stability proof, we start with an evolving domain problem in two dimensions without solving a diffusion equation on that domain. The second example is similar to the first one but three-dimensional. In the third example, we show convergence plots for a diffusion equation with non-homogeneous Neumann boundary conditions on a rotating and growing sphere.\erev

All experiments were implemented in MATLAB\textsuperscript{\textregistered} R2018a and performed in reasonable time on an MSI GE63VR notebook with Intel Core i7-7700HQ processor and 16 GB DDR4-RAM.

\subsection{An evolving open domain}\label{subsection: example 1}
We consider problem \eqref{eq: Poisson equation velocity} for $t \in [0,1]$, with $\Om(0)$ being the unit circle in $\R^2$. As exact solution, we choose 
\begin{align}
	v(x,t) &= \begin{pmatrix}
	\exp(-2t) \left( \exp(x_1) \sin(x_2) - \exp(x_2) \sin (x_1) \right) \\
	2 \exp(-5t) \left( x_1^2 - x_2^2 \right)
	\end{pmatrix} \,,
\end{align}
which satisfies $-\laplace v = 0$. Exemplary triangulations of $\Om(t_j)$ for $t_j=j/5$, $j=0,\ldots,5$ are shown in Figures~\ref{fig: triang1} and  \ref{fig: triang2}.
\begin{figure}[!b]
	\begin{minipage}{.3\textwidth}
		\includegraphics[clip=true,scale=0.25]{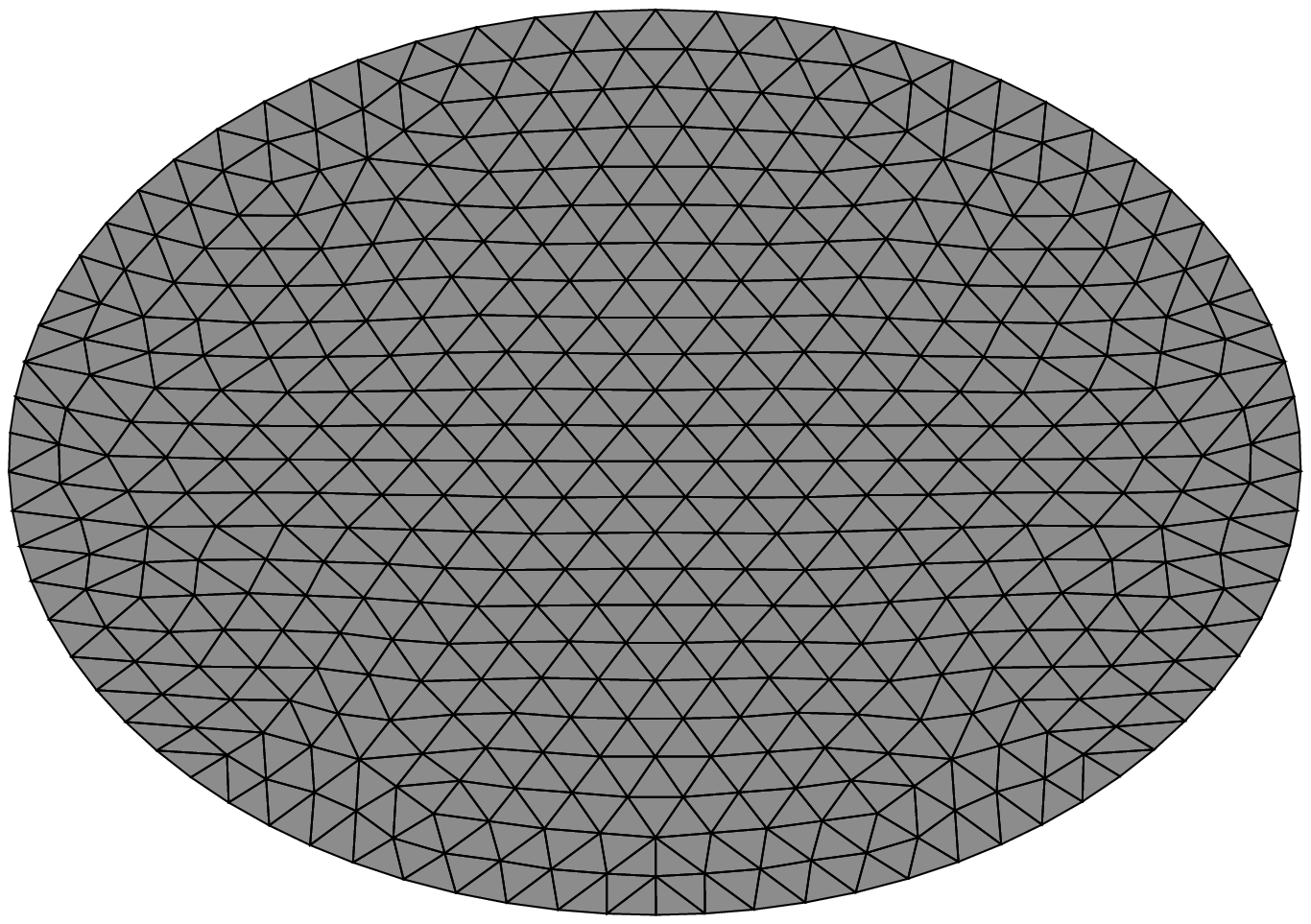}
	\end{minipage}	
	\begin{minipage}{.3\textwidth}
		\includegraphics[clip=true,scale=0.25]{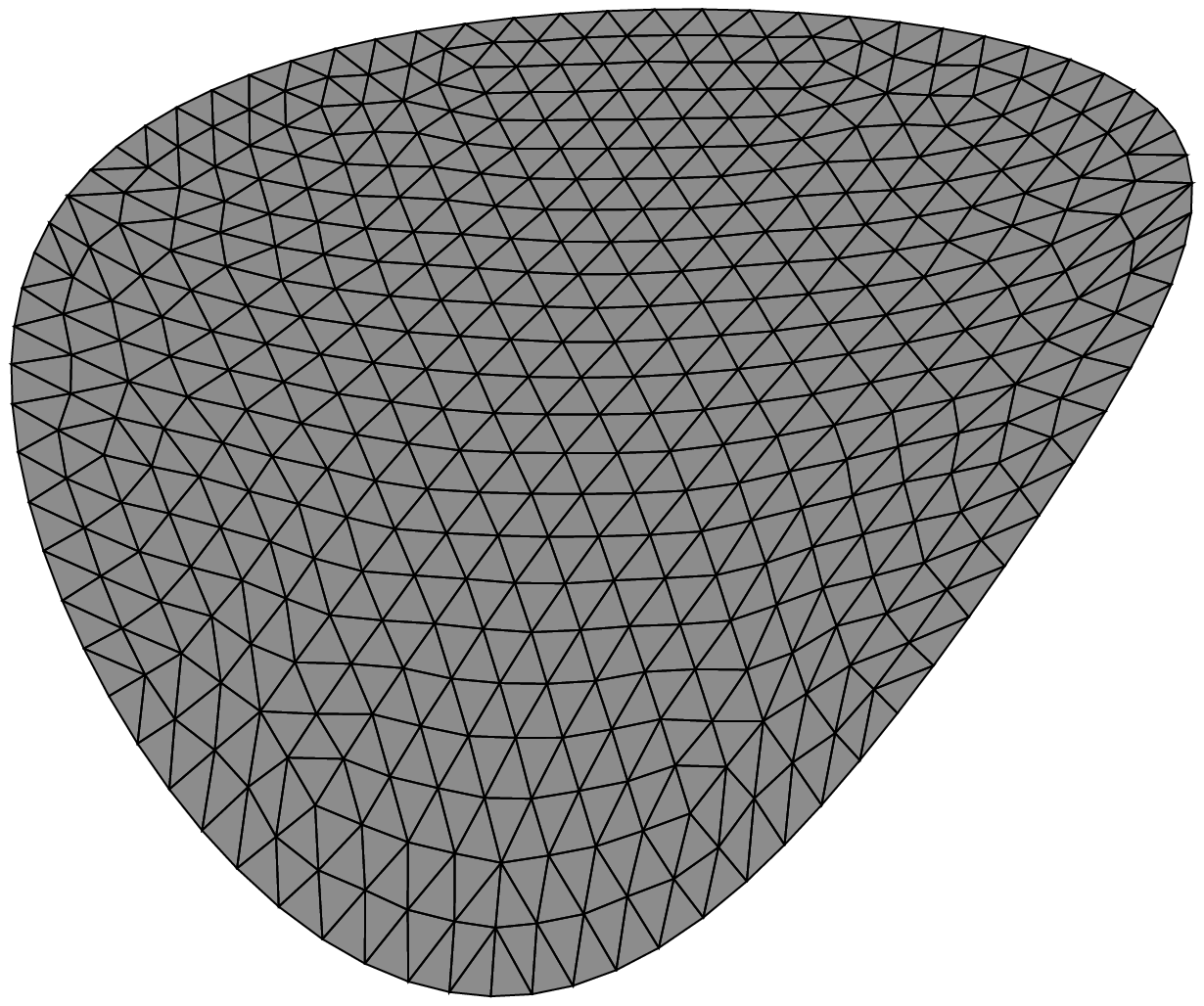}
	\end{minipage}
	\begin{minipage}{.3\textwidth}
		\includegraphics[clip=true,scale=0.25]{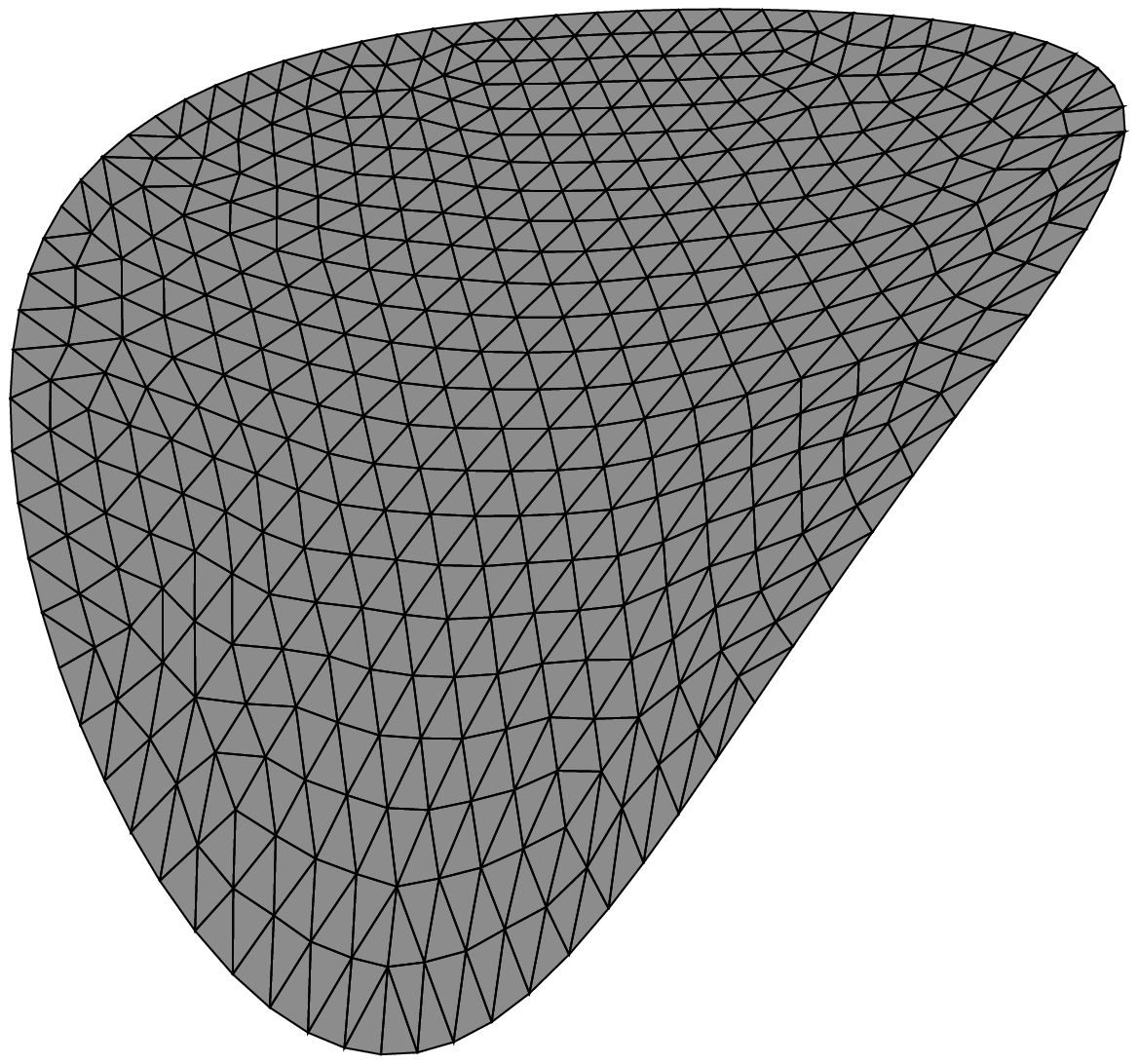}
	\end{minipage}
	\caption{Triangulation of $\Omt$ at $t_0=0$ (left), $t_1=0.2$ (center) and $t_2=0.4$ (right).}\label{fig: triang1}
\end{figure}
\begin{figure}[!t]
	\begin{minipage}{.30\textwidth}
		\includegraphics[clip=true,scale=0.25]{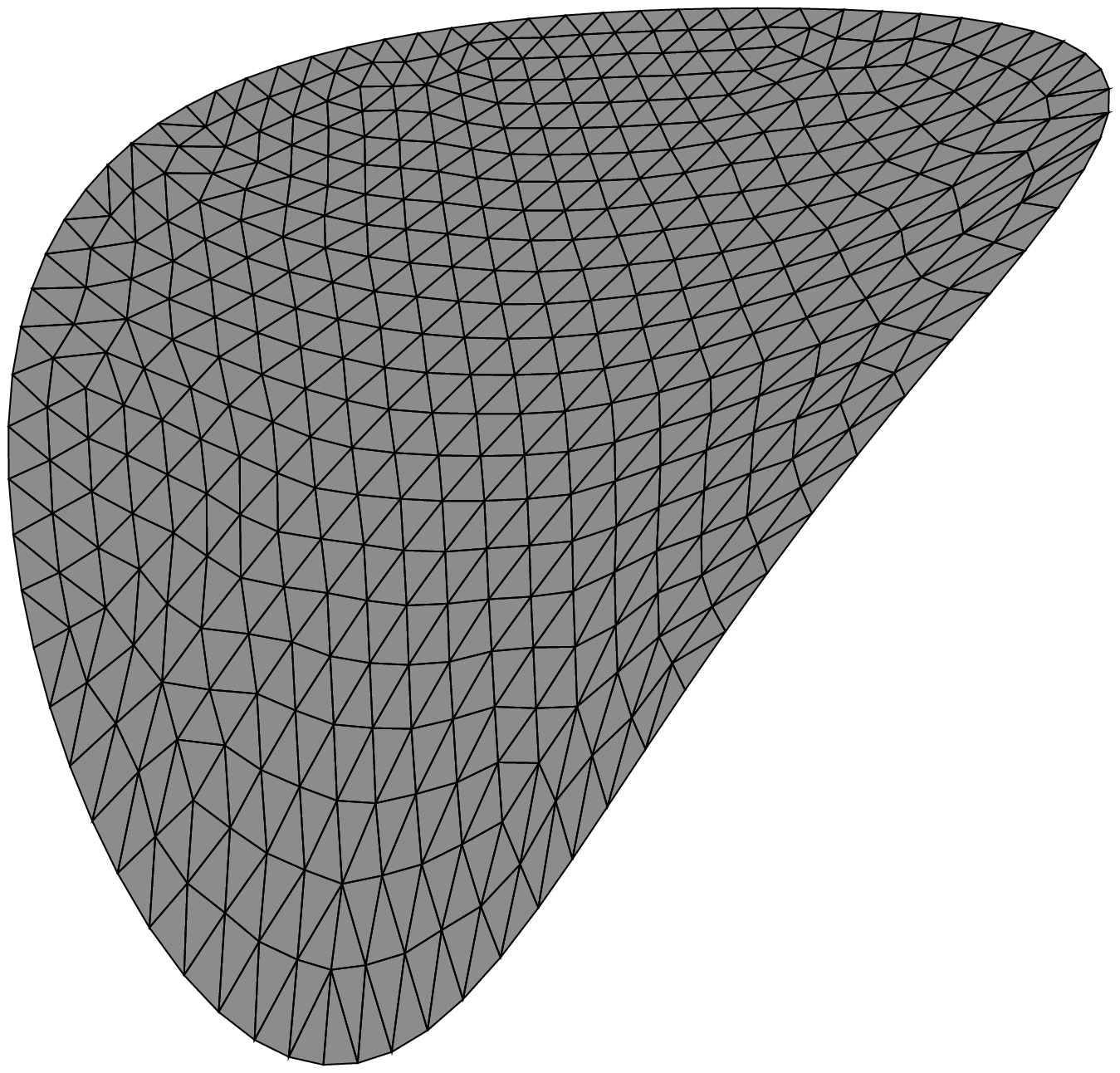}
	\end{minipage}	
	\begin{minipage}{.30\textwidth}
		\includegraphics[clip=true,scale=0.25]{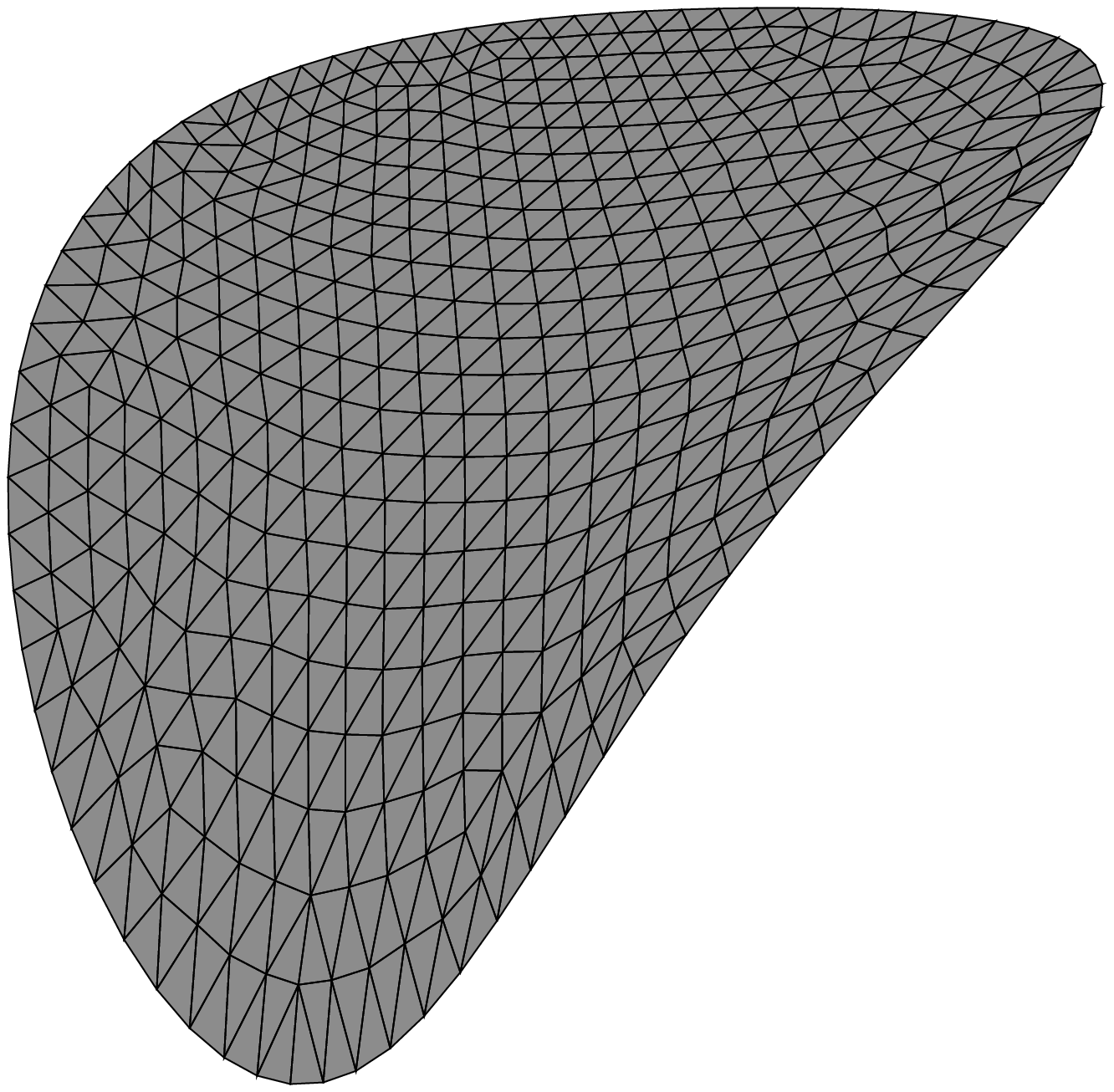}
	\end{minipage}
	\begin{minipage}{.30\textwidth}
		\includegraphics[clip=true,scale=0.25]{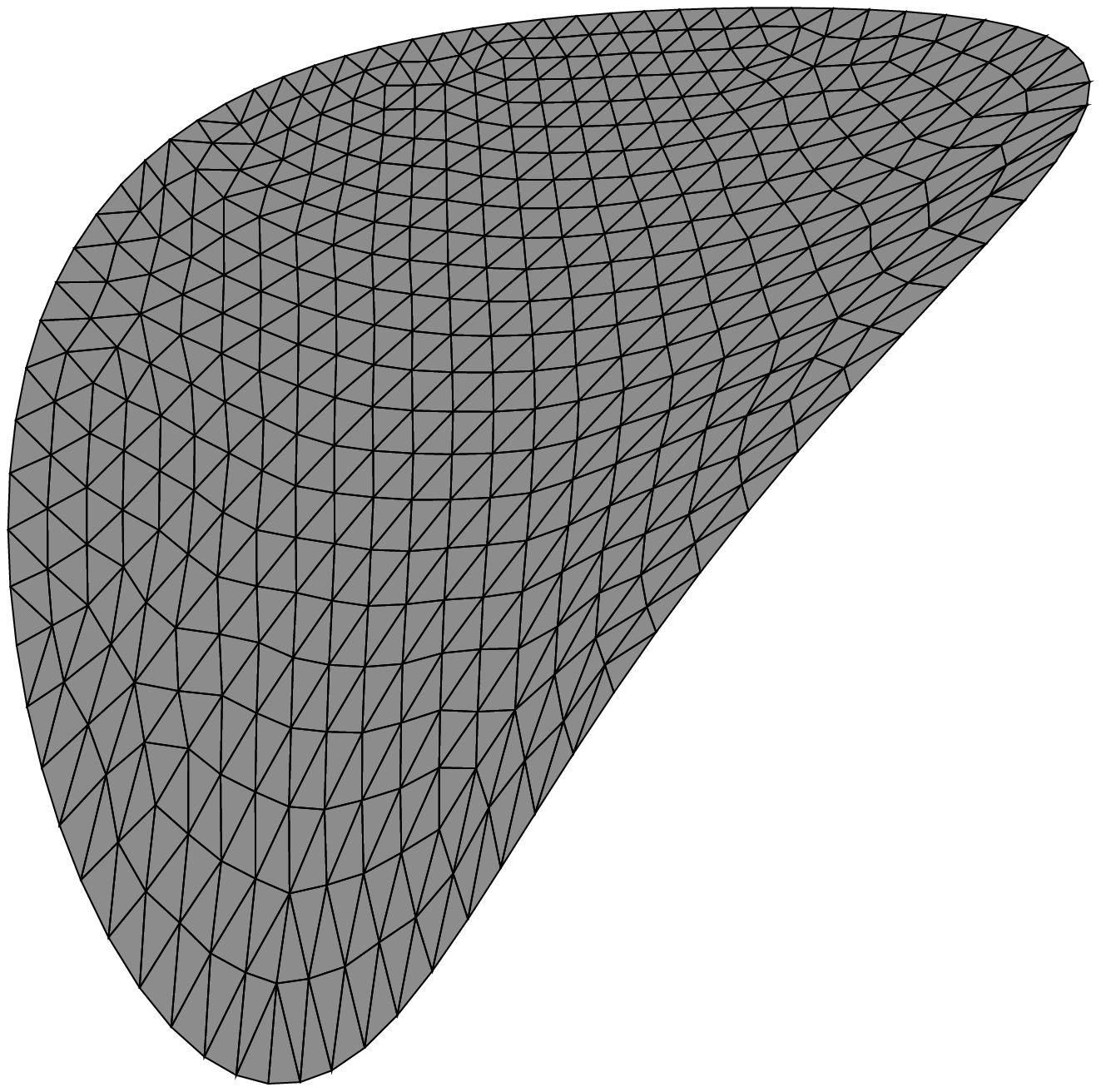}
	\end{minipage}
	\caption{Triangulation of $\Omt$ at $t_3=0.6$ (left), $t_4=0.8$ (center) and $t_5=1.0$ (right).}\label{fig: triang2}
\end{figure}

We apply a second order isoparametric finite element method. \brev For time discretization, we use a linearly implicit 4-step BDF method with time step size $\tau = 8\cdot10^{-3}$, such that the time discretization error is negligible\erev. To compute a reference solution, we use the fact that the above $v$ satisfies $-\laplace v(\cdot,t)=0$, and solve the position ODEs
\[ \dfdt{}{t} x_j(t) = v(x_j(t),t) \,,\quad x_j(0)=x_j^0 \,, \]
in \emph{all} nodes $x_j^0$, $j=1,\ldots,N$, of the initial triangulation with a RK4 method and time step size $\tau = 2 \cdot 10^{-4}$. Since stiffness is no issue in the position ODEs, an explicit high-order time discretization scheme is sufficient to compute a reference solution.

We record the position error
\begin{align}
	\lVert \mathrm{err}_\bfx \rVert_{L^\infty(L^2)} &:= \sup_{n: n \tau \le 1} \lVert (x_h^n)^L - \mathrm{id}_{\Om(t_n)} \rVert_{L^2(\Om(t_n))^2} \,,\\
	\lVert \mathrm{err}_\bfx \rVert_{L^\infty(H^1)} &:= \sup_{n: n \tau \le 1} \lVert \nb \left( (x_h^n)^L - \mathrm{id}_{\Om(t_n)} \right) \rVert_{L^2(\Om(t_n))^2} \,.
\end{align}
\brev and the velocity error $\mathrm{err}_\bfv$ in the same norms for different choices of $h$. Figure~\ref{fig: ex1_bdf3_spatial} shows the results. The error in $H^1$-norm converges with the expected order, whereas the convergence rate of the $L^2$-norm error is one order higher. This is not covered by the theory of this paper and left to possible future works. \erev
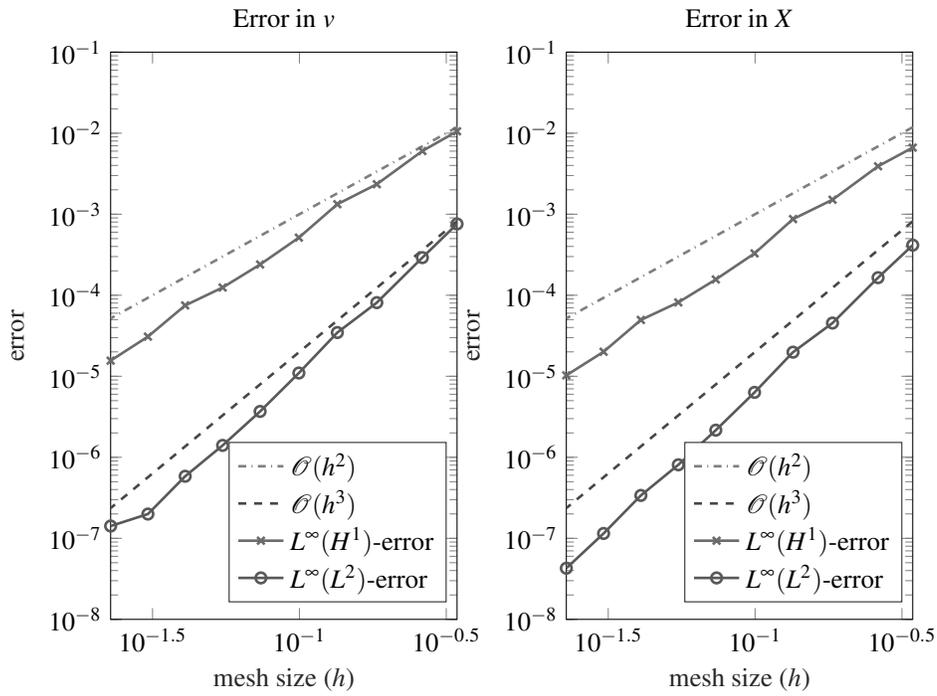
\begin{figure}[htb!]
	\begin{center}
%
%
\definecolor{mycolor1}{rgb}{0.5,0.5,0.5}%
\definecolor{mycolor2}{rgb}{0.28,0.28,0.28}%
\definecolor{mycolor3}{rgb}{0.42,0.42,0.42}%
\definecolor{mycolor4}{rgb}{0.35,0.35,0.35}%
\begin{tikzpicture}

\begin{axis}[%
width=1.813in,
height=2.971in,
at={(0.704in,0.401in)},
scale only axis,
xmode=log,
xmin=0.0227682496375,
xmax=0.344185326777365,
xminorticks=true,
xlabel style={font=\color{white!15!black}},
xlabel={mesh size ($h$)},
ymode=log,
ymin=1e-08,
ymax=0.1,
yminorticks=true,
ylabel style={font=\color{white!15!black}},
ylabel={error},
axis background/.style={fill=white},
title={Error in $v$},
legend style={at={(0.97,0.03)}, anchor=south east, legend cell align=left, align=left, draw=white!15!black}
]
\addplot [color=mycolor1, dashdotted, line width=1.0pt]
  table[row sep=crcr]{%
0.344185326777365	0.0118463539168842\\
0.262247567979484	0.00687737869111541\\
0.183511800711609	0.00336765810004173\\
0.134653571411827	0.001813158429396\\
0.0995855310120098	0.000991727798694396\\
0.0735033052111926	0.000540273587696974\\
0.054782539201058	0.000300112660131545\\
0.0408424807519244	0.000166810823397132\\
0.0304951795393483	9.29955975137088e-05\\
0.0227682496375	5.1839319155552e-05\\
};
\addlegendentry{$\mathcal{O}(h^2)$}

\addplot [color=mycolor2, dashed, line width=1.0pt]
  table[row sep=crcr]{%
0.344185326777365	0.000815468238800619\\
0.262247567979484	0.000360715167163788\\
0.183511800711609	0.000123601000423939\\
0.134653571411827	4.8829651610726e-05\\
0.0995855310120098	1.97523478904706e-05\\
0.0735033052111926	7.94237888280735e-06\\
0.054782539201058	3.28818671367803e-06\\
0.0408424807519244	1.362593568762e-06\\
0.0304951795393483	5.67183488509904e-07\\
0.0227682496375	2.36058111914329e-07\\
};
\addlegendentry{$\mathcal{O}(h^3)$}

\addplot [color=mycolor3, line width=1.0pt, mark=x, mark options={solid, mycolor3}]
  table[row sep=crcr]{%
0.344185326777365	0.0105517881375063\\
0.262247567979484	0.00602757720099625\\
0.183511800711609	0.00234563302038345\\
0.134653571411827	0.0013287408768638\\
0.0995855310120098	0.000514076605648044\\
0.0735033052111926	0.000239662234269506\\
0.054782539201058	0.000124568458529948\\
0.0408424807519244	7.49186548716811e-05\\
0.0304951795393483	3.085972622093e-05\\
0.0227682496375	1.55903754069696e-05\\
};
\addlegendentry{$L^\infty(H^1)$-error}

\addplot [color=mycolor4, line width=1.0pt, mark=o, mark options={solid, mycolor4}]
  table[row sep=crcr]{%
0.344185326777365	0.000758002279879271\\
0.262247567979484	0.000291258758732939\\
0.183511800711609	8.10612904705798e-05\\
0.134653571411827	3.46496717020915e-05\\
0.0995855310120098	1.09677308611865e-05\\
0.0735033052111926	3.68133291320008e-06\\
0.054782539201058	1.4021227598249e-06\\
0.0408424807519244	5.833082788723e-07\\
0.0304951795393483	1.99339703783228e-07\\
0.0227682496375	1.41814427162488e-07\\
};
\addlegendentry{$L^\infty(L^2)$-error}

\end{axis}

\begin{axis}[%
width=1.813in,
height=2.971in,
at={(3.089in,0.401in)},
scale only axis,
xmode=log,
xmin=0.0227682496375,
xmax=0.344185326777365,
xminorticks=true,
xlabel style={font=\color{white!15!black}},
xlabel={mesh size ($h$)},
ymode=log,
ymin=1e-08,
ymax=1e-01,
yminorticks=true,
ylabel style={font=\color{white!15!black}},
ylabel={error},
axis background/.style={fill=white},
title={Error in $X$},
legend style={at={(0.97,0.03)}, anchor=south east, legend cell align=left, align=left, draw=white!15!black}
]
\addplot [color=mycolor1, dashdotted, line width=1.0pt]
  table[row sep=crcr]{%
0.344185326777365	0.0118463539168842\\
0.262247567979484	0.00687737869111541\\
0.183511800711609	0.00336765810004173\\
0.134653571411827	0.001813158429396\\
0.0995855310120098	0.000991727798694396\\
0.0735033052111926	0.000540273587696974\\
0.054782539201058	0.000300112660131545\\
0.0408424807519244	0.000166810823397132\\
0.0304951795393483	9.29955975137088e-05\\
0.0227682496375	5.1839319155552e-05\\
};
\addlegendentry{$\mathcal{O}(h^2)$}

\addplot [color=mycolor2, dashed, line width=1.0pt]
  table[row sep=crcr]{%
0.344185326777365	0.000815468238800619\\
0.262247567979484	0.000360715167163788\\
0.183511800711609	0.000123601000423939\\
0.134653571411827	4.8829651610726e-05\\
0.0995855310120098	1.97523478904706e-05\\
0.0735033052111926	7.94237888280735e-06\\
0.054782539201058	3.28818671367803e-06\\
0.0408424807519244	1.362593568762e-06\\
0.0304951795393483	5.67183488509904e-07\\
0.0227682496375	2.36058111914329e-07\\
};
\addlegendentry{$\mathcal{O}(h^3)$}

\addplot [color=mycolor3, line width=1.0pt, mark=x, mark options={solid, mycolor3}]
  table[row sep=crcr]{%
0.344185326777365	0.00663685738886193\\
0.262247567979484	0.00390611494444339\\
0.183511800711609	0.00150931977007308\\
0.134653571411827	0.000871292875549033\\
0.0995855310120098	0.000328478105622272\\
0.0735033052111926	0.000156631356985093\\
0.054782539201058	8.16040886399665e-05\\
0.0408424807519244	4.95580507007103e-05\\
0.0304951795393483	2.00737632581705e-05\\
0.0227682496375	1.02685126798088e-05\\
};
\addlegendentry{$L^\infty(H^1)$-error}

\addplot [color=mycolor4, line width=1.0pt, mark=o, mark options={solid, mycolor4}]
  table[row sep=crcr]{%
0.344185326777365	0.000414992523310591\\
0.262247567979484	0.000164727747573887\\
0.183511800711609	4.53815793555502e-05\\
0.134653571411827	1.98370486779085e-05\\
0.0995855310120098	6.32398302536508e-06\\
0.0735033052111926	2.16095716560302e-06\\
0.054782539201058	8.07898846813942e-07\\
0.0408424807519244	3.38224190193256e-07\\
0.0304951795393483	1.14811512294822e-07\\
0.0227682496375	4.2741549713073e-08\\
};
\addlegendentry{$L^\infty(L^2)$-error}

\end{axis}
\end{tikzpicture}%
	\end{center}
	\caption{Convergence rate of the evolving quadratic finite element discretization of Example~\ref{subsection: example 1}.}\label{fig: ex1_bdf3_spatial}
\end{figure}

\begin{remark}[Linear finite elements]
	We solved the same problem with linear finite elements. Although not covered by the theory of this paper, we observe \brev the expected \erev $\mathcal{O}(h^2)$-convergence in $L^2$-norm and $\mathcal{O}(h)$-convergence in $H^1$-norm.
\end{remark}

\brev
\subsection{An evolving 3d domain}\label{ex2}
This example is similar to the previous one, but in three dimensions. We consider \eqref{eq: Poisson equation velocity} for $t \in [0,0.1]$, with $\Om(0)$ being the unit ball in $\R^3$. As exact solution, we choose 
\begin{align}
	v(x,t) = \begin{pmatrix}
		\sin(-t/10)(x_1^2-2x_2^2+x_3^2) \\
		\sin(-t/10)(\exp(x_2)\sin(x_3)-\exp(x_3)\sin(x_2)) \\
		\exp(-5t)(x_1^2-x_3^2)
	\end{pmatrix}
\end{align}
which satisfies $-\Delta v = 0$. We use isoparametric finite elements of second order. For the time discretization and reference solution, we proceed as in the previous example, with $\tau = 10^{-3}$ for the BDF method and $\tau = 10^{-6}$ for the reference solution. We record the $L^\infty(L^2)$- and $L^\infty(H^1)$-norm of the position and velocity error. The results are shown in Figure~\ref{fig: ex2_convergence}. 
\erev
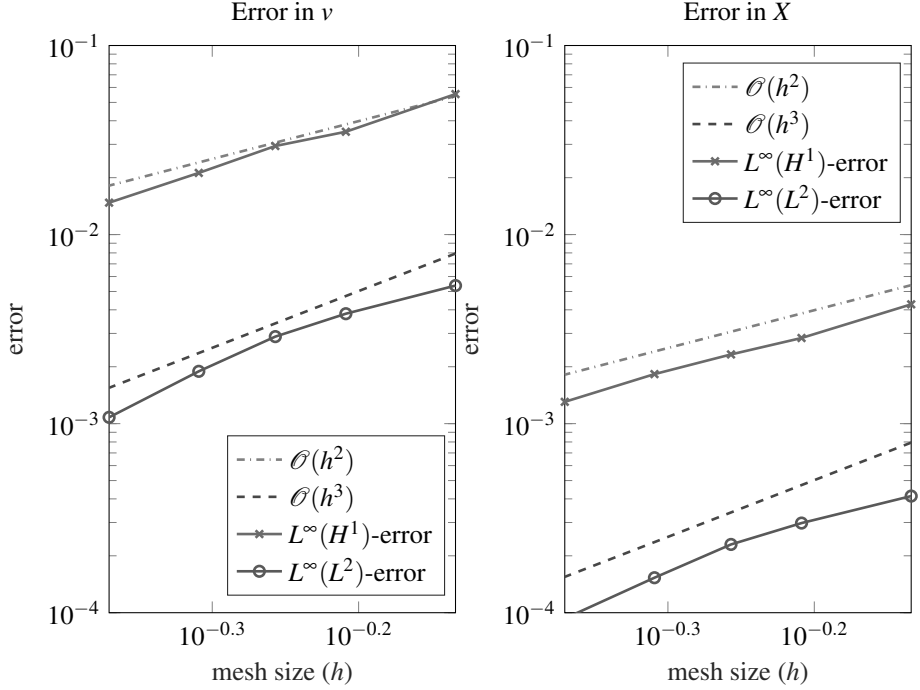
\begin{figure}[htb!]
	\begin{center}
%
%
\definecolor{mycolor1}{rgb}{0.5,0.5,0.5}%
\definecolor{mycolor2}{rgb}{0.28,0.28,0.28}%
\definecolor{mycolor3}{rgb}{0.42,0.42,0.42}%
\definecolor{mycolor4}{rgb}{0.35,0.35,0.35}%
\begin{tikzpicture}

\begin{axis}[%
width=1.813in,
height=2.971in,
at={(0.704in,0.401in)},
scale only axis,
xmode=log,
xmin=0.426113479553984,
xmax=0.734922715464332,
xminorticks=true,
xlabel style={font=\color{white!15!black}},
xlabel={mesh size ($h$)},
ymode=log,
ymin=1e-04,
ymax=0.1,
yminorticks=true,
ylabel style={font=\color{white!15!black}},
ylabel={error},
axis background/.style={fill=white},
title={Error in $v$},
legend style={at={(0.97,0.03)}, anchor=south east, legend cell align=left, align=left, draw=white!15!black}
]
\addplot [color=mycolor1, dashdotted, line width=1.0pt]
  table[row sep=crcr]{%
0.734922715464332	0.0540111397705467\\
0.618455933616664	0.038248774182566\\
0.553472996485489	0.0306332357838626\\
0.490672698071972	0.0240759696633229\\
0.426113479553984	0.0181572697457604\\
};
\addlegendentry{$\mathcal{O}(h^2)$}

\addplot [color=mycolor2, dashed, line width=1.0pt]
  table[row sep=crcr]{%
0.734922715464332	0.00793880270109875\\
0.618455933616664	0.00473103626935436\\
0.553472996485489	0.00339093376026819\\
0.490672698071972	0.00236268419868032\\
0.426113479553984	0.00154741147811325\\
};
\addlegendentry{$\mathcal{O}(h^3)$}

\addplot [color=mycolor3, line width=1.0pt, mark=x, mark options={solid, mycolor3}]
  table[row sep=crcr]{%
0.734922715464332	0.0553454125043073\\
0.618455933616664	0.0349904550602355\\
0.553472996485489	0.0294095353824828\\
0.490672698071972	0.0211932812561844\\
0.426113479553984	0.0147509758715837\\
};
\addlegendentry{$L^\infty(H^1)$-error}

\addplot [color=mycolor4, line width=1.0pt, mark=o, mark options={solid, mycolor4}]
  table[row sep=crcr]{%
0.734922715464332	0.00537280287934858\\
0.618455933616664	0.0038148991275289\\
0.553472996485489	0.0028860960388423\\
0.490672698071972	0.00189140455742784\\
0.426113479553984	0.00107990040671304\\
};
\addlegendentry{$L^\infty(L^2)$-error}

\end{axis}

\begin{axis}[%
width=1.813in,
height=2.971in,
at={(3.089in,0.401in)},
scale only axis,
xmode=log,
xmin=0.426113479553984,
xmax=0.734922715464332,
xminorticks=true,
xlabel style={font=\color{white!15!black}},
xlabel={mesh size ($h$)},
ymode=log,
ymin=1e-04,
ymax=0.1,
yminorticks=true,
ylabel style={font=\color{white!15!black}},
ylabel={error},
axis background/.style={fill=white},
title={Error in $X$},
legend style={at={(0.97,0.97)}, anchor=north east, legend cell align=left, align=left, draw=white!15!black}
]
\addplot [color=mycolor1, dashdotted, line width=1.0pt]
  table[row sep=crcr]{%
0.734922715464332	0.00540111397705467\\
0.618455933616664	0.0038248774182566\\
0.553472996485489	0.00306332357838626\\
0.490672698071972	0.00240759696633229\\
0.426113479553984	0.00181572697457604\\
};
\addlegendentry{$\mathcal{O}(h^2)$}

\addplot [color=mycolor2, dashed, line width=1.0pt]
  table[row sep=crcr]{%
0.734922715464332	0.000793880270109875\\
0.618455933616664	0.000473103626935436\\
0.553472996485489	0.000339093376026819\\
0.490672698071972	0.000236268419868032\\
0.426113479553984	0.000154741147811325\\
};
\addlegendentry{$\mathcal{O}(h^3)$}

\addplot [color=mycolor3, line width=1.0pt, mark=x, mark options={solid, mycolor3}]
  table[row sep=crcr]{%
0.734922715464332	0.00427537877470703\\
0.618455933616664	0.00283764525660276\\
0.553472996485489	0.00232307318149603\\
0.490672698071972	0.00182810458550601\\
0.426113479553984	0.00130407481765572\\
};
\addlegendentry{$L^\infty(H^1)$-error}

\addplot [color=mycolor4, line width=1.0pt, mark=o, mark options={solid, mycolor4}]
  table[row sep=crcr]{%
0.734922715464332	0.00041390709406179\\
0.618455933616664	0.000297950867494692\\
0.553472996485489	0.000229645000020947\\
0.490672698071972	0.000153130362551234\\
0.426113479553984	9.31737815971896e-05\\
};
\addlegendentry{$L^\infty(L^2)$-error}

\end{axis}
\end{tikzpicture}%
	\end{center}
	\caption{Convergence rate of the evolving quadratic finite element discretization of Example~\ref{ex2}.}\label{fig: ex2_convergence}
\end{figure}

\subsection{Diffusion equation}\label{subsection: example 2}
In this example, we consider the diffusion equation \eqref{eq: conservation and diffusion law}, where the velocity again satisfies \eqref{eq: Poisson equation velocity}. As exact solution, we choose $\beta = 1$ and
\begin{align}
	u(x,y,t) &= e^{-t}(x^2+y^2)(x^2-y^2) \,,\\
	v(x,y,t) &= \left( 1 - \frac{1}{r(t)} \right) \begin{pmatrix} x \\ y \end{pmatrix} + \begin{pmatrix} -y \\ x \end{pmatrix} \,, \quad\text{where}\quad r(t) = \frac{2}{1+e^{-t}} \,.
\end{align}
The velocity $v$ describes a growing ball which in addition is rotating anti-clockwise (cf. \cite[Example 11.1]{KLLP17}), $r(t)$ is the radius of the ball at $t \in [0,T]$. We compute the right-hand side functions $f$ and $g$ of \eqref{eq: conservation and diffusion law} and apply second order isoparametric finite elements in space and a linearly implicit 4~step BDF method \brev with time step-size $\tau=10^{-3}$ \erev in time.

Note that $v$ is linear in $x$ and $y$, so the solution to $-\laplace v = 0$ is computed exactly by the finite element method. This is reflected in the convergence plot in $v$, which shows a purely temporal convergence and is thus not shown here. \brev We record the error
\begin{align}
	\| \mathrm{err}_\bfu \|_{L^\infty(L^2)} &:= \sup_{n: t_n \le 1} \| (u_h^n)^L - u(\cdot,t_n) \|_{L^2(\Om(t_n))} \,,\\
	\| \mathrm{err}_\bfu \|_{L^2(H^1)} &:= \left( \tau \sum_{n: t_n \le 1} \| (u_h^n)^L - u(\cdot,t_n) \|_{H^1(\Om(t_n))}^2 \right)^\frac{1}{2}  \,,
\end{align}
where $\tau$ denotes the time step size and $t_n = n \tau$ the $n$-th time step. Figure~\ref{fig: ex2_bdf3_spatial} shows the results. As expected, the error in the $L^2(H^1)$-norm converges with the expected order, whereas the $L^2$-norm convergence rate is one order higher. \erev
\begin{figure}[htb!]
	\begin{center}
%
%
\definecolor{mycolor1}{rgb}{0.5,0.5,0.5}%
\definecolor{mycolor2}{rgb}{0.28,0.28,0.28}%
\definecolor{mycolor3}{rgb}{0.42,0.42,0.42}%
\definecolor{mycolor4}{rgb}{0.35,0.35,0.35}%
\begin{tikzpicture}

\begin{axis}[%
width=4.198in,
height=2.944in,
at={(0.704in,0.429in)},
scale only axis,
xmode=log,
xmin=0.0408424807519244,
xmax=0.344185326777365,
xminorticks=true,
xlabel style={font=\color{white!15!black}},
xlabel={mesh size ($h$)},
ymode=log,
ymin=1e-06,
ymax=0.1,
yminorticks=true,
ylabel style={font=\color{white!15!black}},
ylabel={error},
axis background/.style={fill=white},
title={Error in $u$},
legend style={at={(0.97,0.03)}, anchor=south east, legend cell align=left, align=left, draw=white!15!black}
]
\addplot [color=mycolor1, dashdotted, line width=1.0pt]
  table[row sep=crcr]{%
0.344185326777365	0.0592317695844208\\
0.262247567979484	0.034386893455577\\
0.183511800711609	0.0168382905002086\\
0.134653571411827	0.00906579214697999\\
0.0995855310120098	0.00495863899347198\\
0.0735033052111926	0.00270136793848487\\
0.054782539201058	0.00150056330065773\\
0.0408424807519244	0.000834054116985658\\
};
\addlegendentry{$\mathcal{O}(h^2)$}

\addplot [color=mycolor2, dashed, line width=1.0pt]
  table[row sep=crcr]{%
0.344185326777365	0.00815468238800619\\
0.262247567979484	0.00360715167163788\\
0.183511800711609	0.00123601000423939\\
0.134653571411827	0.00048829651610726\\
0.0995855310120098	0.000197523478904706\\
0.0735033052111926	7.94237888280735e-05\\
0.054782539201058	3.28818671367803e-05\\
0.0408424807519244	1.362593568762e-05\\
};
\addlegendentry{$\mathcal{O}(h^3)$}

\addplot [color=mycolor3, line width=1.0pt, mark=x, mark options={solid, mycolor3}]
  table[row sep=crcr]{%
0.344185326777365	0.051599555095687\\
0.262247567979484	0.0239321477625058\\
0.183511800711609	0.0115393709707093\\
0.134653571411827	0.00513095735207886\\
0.0995855310120098	0.00215365052522033\\
0.0735033052111926	0.00104272912364416\\
0.054782539201058	0.000498671613573675\\
0.0408424807519244	0.000244644110088065\\
};
\addlegendentry{$L^2(H^1)$-error}

\addplot [color=mycolor4, line width=1.0pt, mark=o, mark options={solid, mycolor4}]
  table[row sep=crcr]{%
0.344185326777365	0.00531945267777078\\
0.262247567979484	0.0016778718154684\\
0.183511800711609	0.000601409365413919\\
0.134653571411827	0.000213714266863887\\
0.0995855310120098	7.76555461076461e-05\\
0.0735033052111926	2.72093148718381e-05\\
0.054782539201058	9.79138597375079e-06\\
0.0408424807519244	3.50208583971646e-06\\
};
\addlegendentry{$L^\infty(L^2)$-error}

\end{axis}
\end{tikzpicture}%
	\end{center}
	\caption{Convergence rate in $u$ of the evolving quadratic finite element discretization of Example~\ref{subsection: example 2}.}\label{fig: ex2_bdf3_spatial}
\end{figure}
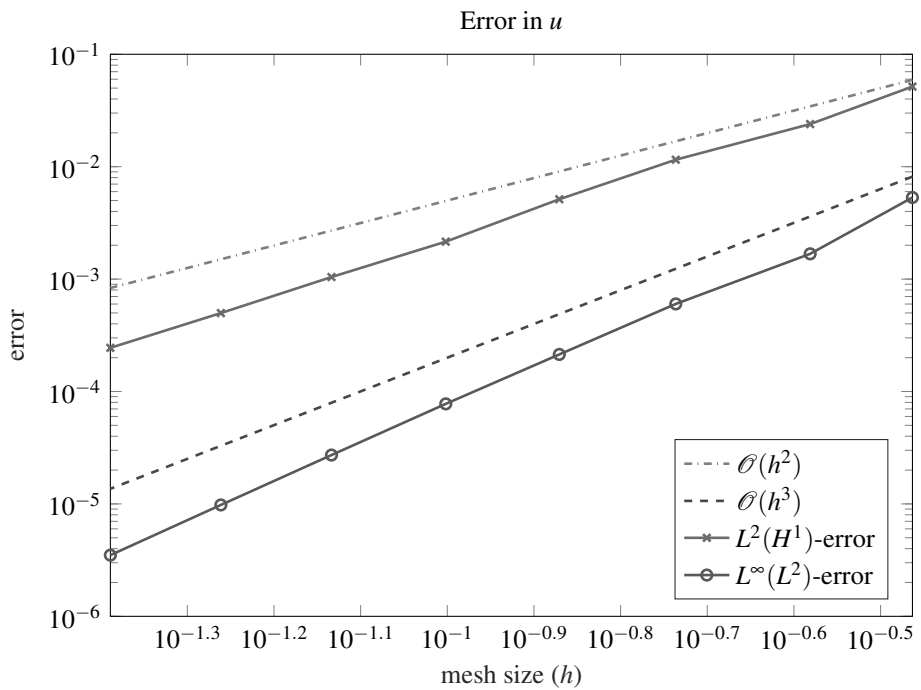

%
\section*{Acknowledgments}
The author is very grateful to Christian Lubich and Bal\'{a}zs Kov\'{a}cs for stimulating discussions on the topic and their help during the work on this manuscript. \brev We thank the anonymous referees for helpful and constructive comments. \erev

\bibliography{biblio}{}
\bibliographystyle{authordate1}
\end{document}